\numberwithin{equation}{section}
\newcommand{\lie}{\mathcal G}
\newcommand{\B}{\mathcal B}
\newcommand{\al}{\alpha}
\newcommand{\p}{\partial}  %followed by _
\newcommand{\e}{\epsilon}
\newcommand{\sg}{S}
\newcommand{\matriceB}{E}
\theoremstyle{remark}
\newtheorem*{rem*}{Remark}
\newtheorem{rem}{Remark}
\theoremstyle{plain}
\newtheorem{prop}{Proposition}
\theoremstyle{plain}
\newtheorem{teor}{Theorem}
\newtheorem*{teor*}{Theorem}
\theoremstyle{definition}
\newtheorem{lemma}{Lemma}
\newtheorem{cor}{Corollary}
\newtheorem{defin}{Definition}
\newcommand{\lap}[1]{\Delta #1}
\newcommand{\G}{\mathbb{G}}
\newcommand{\N}{\mathbb{N}}
\newcommand{\R}{\mathbb{R}}
\title[Heat kernels and mean curvature flow of graphs]{Sub-Riemannian heat kernels  \\ and mean curvature flow of graphs.}
\author{Luca Capogna}\address{Institute for Mathematics and its Applications, University of Minnesota, Minneapolis, MN 55455 \\ Department of Mathematical Sciences,
University of Arkansas, Fayetteville, AR 72701}\email{lcapogna@uark.edu}
\author{Giovanna Citti}\address{Dipartimento di Matematica, Piazza Porta S. Donato 5,
40126 Bologna, Italy}\email{citti@dm.unibo.it}
\author{Cosimo Senni Guidotti Magnani}\address{Dipartimento di Matematica, Piazza Porta S. Donato 5,
40126 Bologna, Italy}\email{senni@dm.unibo.it}
\keywords{Heat kernels in Lie groups, mean curvature flow, discrete time-step approximations, nonlinear semigroups\\
L. Capogna is  partially funded by NSF award  DMS 1101478. G. Citti and C. Senni Guidotti Magnani are partially funded by CG-DICE ISERLES European Project }
\begin{document}

\begin{abstract}
We introduce a sub-Riemannian analogue of the Bence-Merriman-Osher algorithm \cite{MerrimanBence Osher}
and show that it leads to weak solutions of the horizontal mean curvature flow of graphs over sub-Riemannian Carnot groups. The proof follows the nonlinear semi-group theory approach originally introduced by L. C. Evans \cite{EVANS} in the Euclidean setting and is based on new results on
the relation between sub-Riemannian heat flows of characteristic functions of subgraphs and the horizontal mean curvature of the corresponding graphs.\end{abstract}

\thanks{The authors would like to thank L. Ambrosio for useful discussions and acknowledge the hospitality of the Institute for Mathematics and its Applications (IMA) where part of the work was developed. We are also very grateful to the referee for the careful reading of our manuscript and for his/her observations.}
\maketitle

\section{Introduction}

The study of mean curvature flow in the sub-Riemannian setting is still at a very early stage, with
several key properties, such as existence, uniqueness  and regularity, still unknown. The notion itself  of motion by mean curvature is  understood only in special cases, as in the evolution of graphs over groups.

In this paper we give two contributions to this topic. First we establish in  the Carnot group setting a formula relating mean curvature of a graph and the heat flow of the characteristic function of the corresponding  sub-graph (see Lemma \ref{plane}). This formula seems to be new also in the Riemannian setting (see Corollary \ref{Riemannian-setting}). Then, using such formula we prove convergence of an analogue of the Bence-Merrriman-Osher \cite{MerrimanBence Osher}  
 algorithm in the Carnot group setting (see Theorem \ref{MAIN}), yielding a easily implementable time-step approximation of the sub-Riemannian mean curvature flow of graphs over Carnot groups. This convergence can in turn be used to define a notion of mean curvature flow also across characteristic points.

\subsection{ Carnot groups}
A Lie group $\G$ is called a  homogeneous stratified Lie group if its Lie algebra admits a stratification $$\lie= V^1\oplus \cdots \oplus V^r\text{  with }
[V^i,V^j]=V^{i+j}\text{  and  }[V^i, V^r]=0.$$ We will
let $n$ denote the topological dimension of $\G$.
Given a positive definite bilinear form
$g_0$ on $V^1$ we call the pair $(\G,g_0)$ a Carnot group and the corresponding
left invariant metric $g_0$ a sub-Riemannian metric.  For each $X\in V^i$ let $d(X)=i$ be
the degree of $X$ and let $Q=\sum_{i=1}^ri \; {\mathrm{dim}}(V_i)$ the homogeneous dimension of $\G$.
Let us consider an orthonormal  basis $\{X_1,....,X_n\}$  of $\lie $ fitted to the stratification, i.e. such that
\begin{align}\label{horizontalG}\text{the horizontal layer }H_0 := V^1 \text{ of } \G \text{ is generated by  }X_1, \dots, X_m,
\end{align}
while $\{X_i\}_{d(X_i)=k}$   generates  $V^k$.
These assumptions allow to define homogeneous dilations $\delta_\lambda$ and an homogeneous pseudo-norm $|\cdot |_0$ on $\lie$ (and through exponential coordinates) on $G$.  If $v {\, \in \lie}$ is expressed in term of the element of the basis as $v=\sum_i v_i X_i$, 
\begin{equation}\label{def-norm}
\delta_\lambda (v) := (\lambda ^{d(1)} v_1, \cdots, \lambda ^{d(n)} v_n), \quad |v|_0^{2r!}:=\sum_{i=1}^n |v_i|^{2r!/d(i)}.\end{equation}
In this setting we call horizontal gradient of a function $u: \G\rightarrow \R$ the vector
\begin{equation}\label{nabladefine}\nabla_0 u= (X_1u, \cdots X_mu),\end{equation} and we say that $u$ is in $C^1_X$ if its
horizontal gradient exists and it is continuous.
The graph of a $C^1_X$ function $u: \G\rightarrow \R$ can be considered a surface in $\G\times \R$. The product $\G\times \R$ can be  endowed with the structure of Lie group and  in particular  with  a Carnot group structure by setting $X_{n+1}=\partial_{n+1}$, and $d(n+1)=1$, \begin{equation}\label{horizontalGR}
\text{ the  horizontal layer } \tilde H_0=H_0 \times \R \;\text{ of }\; \G\times \R \text{ is generated by } (X_1, \cdots, X_{m}, X_{n+1}).\end{equation} The metric $g_0$ is extended to a metric $\tilde g_0$ in $\tilde H_0$ by requesting that $X_{n+1}$ is orthogonal to $\lie$ and has unit norm.
 We call horizontal normal of the graph of $u$
the projection on the horizontal plane  $\tilde H_0$ of the Euclidean normal and set
$$\nu_0 = \frac{(-\nabla_0 u, 1) }{\sqrt{1 + |\nabla_0 u|^2}}.$$
 Let us explicitly note that  such  graphs do not contain any characteristic points, i.e. points where the horizontal space is contained in the tangent space.
In the literature several equivalent definitions of the  horizontal mean curvature $h_0$ have been proposed at  non characteristic points of a $C^2$ surface  $M$. To quote a few: $h_0$ can be defined in terms of the first variation of the area functional
 \cite{dgn:minimal, pau:cmc-carnot, chmy:minimal, RR1, Sherbakova, montefalcone,cdpt:survey}, as horizontal divergence of the horizontal unit normal
 or as limit of the mean curvatures $h_\e$ of suitable Riemannian approximating {metrics} $g_\e$
 \cite{cdpt:survey}.
If the surface is not regular, the notion of curvature can be expressed in the viscosity sense (we refer to  \cite{bieske}, \cite{bieske2},
\cite{wang:aronsson}, \cite{wang:convex}, \cite{lms},
\cite{baloghrickly}, \cite{magnani:convex},
\cite{CC} for viscosity solutions of PDE in the sub-Riemannian setting).

The curvature $h_0$ of the graph of a function $u$ can be written as
\begin{equation}\label{curvature} h_0= \frac{-1}{\sqrt{1+|\nabla_0 u|^2}  }\sum_{i,j=1}^m\bigg(\delta_{ij}-\frac{ X_i u X_j u}{1+|\nabla_0 u|^2}\bigg) X_i X_j u\end{equation}

The horizontal mean curvature flow of a graph over a
Carnot group $\G$ is the flow $t \rightarrow \tilde M_t :=\{(x,u(x,t))|x\in \G\}\subset \G\times \R$ in
which each point in the evolving manifold moves along the horizontal normal,
 with speed given by the horizontal mean curvature. The evolution of a family of graphs of functions
 $t\to u(\cdot, t)$  is then characterized by the following equation
  \begin{equation}\label{mcfg}
\p_t u= -Au, \quad \text{ where } Au=-\sum_{i,j=1}^m\bigg(\delta_{ij}-\frac{ X_i u X_j u}{1+|\nabla_0 u|^2}\bigg) X_i X_j u,
\end{equation}
Here we will consider  viscosity solution of this equation:

\begin{defin}\label{weak1}
A continuous function $u:\G\times (0,\infty)\to \R$ is a weak sub-solution (resp.  super-solution) of equation  \eqref{mcfg}
if  for every $(x,t)\in \G\times \R$ and every smooth $\phi:\G\times(0,\infty)\to\R$ such that $u-\phi$ has a maximum (resp. a minimum) at $(x,t)$ then
$$\p_t \phi(x,t)\le \ (resp. \ \ge) \  \sum_{i,j=1}^m\bigg(\delta_{ij}-\frac{X_i \phi(x,t)X_j \phi(x,t)}{1+|\nabla_0 \phi (x,t)|^2}\bigg) X_i X_i \phi(x,t).
$$
 Solutions are  functions which are  simultaneously  super-solutions and a sub-solutions.

\end{defin}

Existence and uniqueness of viscosity solutions to this equation, attaining an assigned initial condition $f$ has been established in  \cite{CC} (see also the recent \cite{Manfredi} as well as  \cite{DirDragoniVonReness} for a probabilistic interpretation of the flow).
One of the goals  of this paper is to provide a discrete approximation of this motion, called diffusion driven motion by mean curvature.

\subsection{Diffusion driven motion by mean curvature}
In the Euclidean setting, the motion by mean curvature can be obtained through an algorithm {introduced by}  Merriman, Bence, Osher \cite{MerrimanBence Osher}, which relates the mean curvature flow and the heat flow.

The algorithm
is organized in two steps\footnote{See Definition \ref{definflow} for the precise definition}:
\begin{itemize}
\item{given  $\lambda\in \R$ and a function $f$, the characteristic function $\chi_{\sg_\lambda }$ of each of its $\lambda-$(sub)level {sets} $\sg_{\lambda} = \{\tilde x:  \ f(\tilde x) >\lambda   \}$ is diffused for a time $t$,  via the subriemannian heat flow, giving rise to a smooth function  $w_\lambda(\tilde x, t)$. }
\item{at time $t$  one defines a new  function  $H(t)f$ by requiring that its $\lambda-$level sets be   $\{\tilde x: w_\lambda(\tilde x, t)>1/2 \} $.}
\end{itemize}
This procedure leads to the definition of a two step algorithm, which given a function $f$ allows to define a new function $H(t)f$. The mean curvature flow $t\to u(\cdot, t)$ with initial data $f$, can be recovered applying iteratively this two step algorithm  for shorter and shorter time intervals (or equivalently through iteratively applying the operator $H$).

The convergence of the Euclidean version of this scheme
has been proved by Evans \cite{EVANS} and Barles, Georgelin \cite{BarlesGeorgelin}. In particular Evans
gave a proof based on nonlinear semi-group theory and on a pointwise study  of heat flow of sets in terms of curvature of their boundaries.
We also refer to  Ishii \cite{Ishii}, Ishii, Pires, Souganidis \cite{IshiiPiresSouganidis},  Leoni \cite{Leoni} and
Chambolle, Novaga \cite{ChambolleNovaga} for {extensions} of this algorithm to more general setting.
Let us mention that other convergent schemes for nonlinear parabolic equations  have been proposed in  \cite{CrandallLions}, \cite{Deckelnick}, \cite{DeckelnickDziuks1}, \cite{DeckelnickDziuks2}, \cite{Walkington} and   \cite{Elliot}.

\subsection{Diffusion driven motion by horizontal mean curvature in Carnot groups}

One of our main results  is an extension of the algorithm in \cite{MerrimanBence Osher} to the degenerate parabolic setting of  sub-Riemannian Lie groups.
A motivation for such extension comes from problems of visual perception and modeling of the visual cortex.
Geometric models for  the visual cortex as a  contact structure go back to work by Hoffmann \cite{Hoffmann}, Petitot and Tondut \cite{PetitotTondut} and Citti, Petitot and Sarti \cite{CSP}.
Later, in \cite{CittiSarti}, Citti and Sarti  endowed this contact structure with a sub-Riemannian metric,
and proposed a model of perceptual completion based {on} two cortical mechanisms applied in sequence. This two-steps process lead to a sub-Riemannian diffusion driven  motion by horizontal mean curvature in the cortical structure which is responsible for formation of subjective surfaces.
In the model they propose they study the evolution of a graph, and this is one of the main reasons why in this paper we focus on  diffusion driven motion of {graphs}.

The diffusion driven motion by curvature problem can be formulated also in the general setting of evolving  arbitrary surfaces in {Carnot groups (where it could have a wealth of applications in the study of minimal surfaces and isoperimetric problems). In this wider context, however, the notion of horizontal mean curvature  is not yet known, because the presence of characteristic points cannot be excluded.} Although in this paper we do not study this more general setting we do provide a relation between mean curvature and heat kernels which could be used for that purpose (see Proposition \ref{taylorchi}).

In Section 2 we will begin the proof of our main theorem, by stating a preliminary technical result, which allows to express sub-Riemannian quantities as limit of Riemannian ones. In particular we express the heat kernel as limit of the corresponding Riemannian ones. This instrument follows from a generalization of the celebrated lifting method of Rothschild and Stein \cite{RS}, and in the time independent case has been developed in \cite{CM}. The adaptation to the heat operator provides a powerful and general instrument for transfer results from the Riemannian to the sub-Riemannian setting. The proof of the technical result can be found in the frecent preprint \cite{CCM}.

In Section 3 we {introduce} the diffusion driven motion by mean curvature, and we restate the definition of {the} mean curvature operator in the setting of non linear semi-groups.
In particular we recognize that the curvature operator $A$ defined in \eqref{mcfg} is non dissipative (see definition \ref{diss} below).
According to a Theorem  of Crandall and Liggett \cite[Theorem I, page 266]{crandall-liggett}
this allows to give a weak definition of solution of \eqref{mcfg} in terms of a non linear semi-group:

\begin{teor*}\label{CraLiggteo}
If $A$ is non dissipative operator on a Banach space $\B$, then for all $f\in \B$  the limit
\begin{equation}\label{semigroup}
M(t)(f):= \lim_{j\to \infty, \\\lambda j\to t} (I+\lambda A)^{-j} f
\end{equation}
exists  locally uniformly in $t$.
\end{teor*}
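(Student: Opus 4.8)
The plan is to run the classical Crandall--Liggett scheme and show that the iterated resolvents form a Cauchy net. Write $J_\lambda:=(I+\lambda A)^{-1}$ for the resolvent. Non-dissipativity (Definition \ref{diss}), together with the range condition it carries, guarantees that each $J_\lambda$ is a single-valued \emph{nonexpansive} self-map of $\B$, i.e. $\|J_\lambda x-J_\lambda y\|\le\|x-y\|$; hence every iterate $J_\lambda^{m}$ is nonexpansive as well, and \eqref{semigroup} is simply $\lim J_\lambda^{j}f$ taken along $j\to\infty$, $\lambda j\to t$. I would first establish the limit for $f\in D(A)$ and then pass to $f\in\overline{D(A)}$ by density, using nonexpansiveness. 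Throughout set $|Af|:=\inf_{v\in Af}\|v\|$.

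The two soft ingredients are a one-step bound and the resolvent identity. For $f\in D(A)$ and $v\in Af$ one has $J_\lambda(f+\lambda v)=f$, so nonexpansiveness gives $\|J_\lambda f-f\|\le\lambda\|v\|$, whence by iteration
\[
\|J_\lambda^{m}f-f\|\le m\lambda\,|Af|,\qquad \|J_\mu^{n}f-f\|\le n\mu\,|Af|.
\]
For $0<\mu\le\lambda$ the definitions yield the resolvent identity
\[
J_\lambda z=J_\mu\!\Big(\tfrac{\mu}{\lambda}\,z+\big(1-\tfrac{\mu}{\lambda}\big)J_\lambda z\Big),
\]
which, applied with $z=J_\lambda^{m-1}f$ and combined with the nonexpansiveness of $J_\mu$, shows that $a_{m,n}:=\|J_\lambda^{m}f-J_\mu^{n}f\|$ obeys the fundamental recursion
\[
a_{m,n}\le\tfrac{\mu}{\lambda}\,a_{m-1,n-1}+\Big(1-\tfrac{\mu}{\lambda}\Big)a_{m,n-1}.
\]

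The crux is to convert this recursion into a quantitative bound. I would prove, by induction on $m+n$, the Crandall--Liggett estimate
\[
\|J_\lambda^{m}f-J_\mu^{n}f\|\le|Af|\,\Big[(m\lambda-n\mu)^2+m\lambda^2+n\mu^2\Big]^{1/2},
\]
by checking that its right-hand side dominates the boundary data above and is itself a supersolution of the recursion. The reason this is the right guess is that iterating the recursion distributes the mass of $a_{m,n}$ over the boundary according to a $\mathrm{Binomial}(n,\mu/\lambda)$ law, whose mean $n\mu/\lambda$ accounts for the term $(m\lambda-n\mu)^2$ and whose variance, through a Cauchy--Schwarz step, accounts for $m\lambda^2+n\mu^2$. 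Granting the estimate, take two admissible sequences $(\lambda,m)$ and $(\mu,n)$ with $m\lambda\to t$ and $n\mu\to t$; since then $\lambda,\mu\to0$, the right-hand side tends to $0$, so $\{J_\lambda^{j}f\}$ is Cauchy and $M(t)f$ exists for $f\in D(A)$.

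It remains to extend and to obtain uniformity. Because each $J_\lambda^{m}$ is nonexpansive, a $3\varepsilon$ argument, namely $\|J_\lambda^{m}f-J_\mu^{n}f\|\le 2\|f-g\|+\|J_\lambda^{m}g-J_\mu^{n}g\|$ with $g\in D(A)$ close to $f$, upgrades convergence to all $f\in\overline{D(A)}$. Local uniformity in $t$ follows from the same estimate: letting $m\lambda\to t$ and $n\mu\to s$ gives $\|M(t)f-M(s)f\|\le|Af|\,|t-s|$ for $f\in D(A)$, so $t\mapsto M(t)f$ is Lipschitz and the approximation is uniform on compact $t$-intervals, and this equicontinuity transfers to $\overline{D(A)}$ through the density argument. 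I expect the only genuinely delicate point to be the inductive verification of the square-root estimate in the third step; everything else is a formal consequence of the contraction property of the resolvent.
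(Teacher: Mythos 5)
Your sketch is correct, and it is precisely the classical Crandall--Liggett argument that the paper itself does not reproduce but merely cites (\cite[Theorem I, page 266]{crandall-liggett}): nonexpansiveness of the resolvents from Definition \ref{diss}, the resolvent identity, the two-parameter recursion for $a_{m,n}$, and the inductive verification of the majorant $\bigl[(m\lambda-n\mu)^2+m\lambda^2+n\mu^2\bigr]^{1/2}\,|Af|$ --- which indeed closes, since concavity of the square root reduces it to comparing squares, and the variance identity $\tfrac{\mu}{\lambda}\bigl(1-\tfrac{\mu}{\lambda}\bigr)\lambda^2=\mu\lambda-\mu^2$ shows the convex combination of squared majorants falls short of $\phi(m,n)^2$ by exactly $2\mu^2\ge 0$. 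The one point worth noting is that your argument (like the original theorem) yields the limit for $f\in\overline{D(A)}$ rather than literally for every $f\in\B$ as the paper's statement asserts; this is the correct form of the result, and in the paper's application the two coincide because $D(A)$ is dense in $\B$ (Proposition \ref{propB}).
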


The main result of section 3 is the proof {that this notion of weak semi-group solution} coincides with the viscosity one, given in Definition \ref{weak1}:

\begin{prop}\label{generatore}
For every continuous and periodic function $f$, the weak semi-group solution $M(t)(f)$ is the viscosity solution $u$ of motion by curvature, with initial datum $f$.
\end{prop}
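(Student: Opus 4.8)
The plan is to take advantage of the uniqueness of viscosity solutions to \eqref{mcfg} established in \cite{CC}: once we know that $M(t)(f)$ is itself a viscosity solution attaining the initial datum $f$, it must coincide with the unique solution $u$. Thus the whole problem reduces to verifying that the abstract Crandall--Liggett limit \eqref{semigroup} satisfies Definition \ref{weak1} and is continuous up to $t=0$. Since $f$ is periodic, I would work on the compact quotient of $\G$ by the period lattice, which supplies the compactness needed for the comparison arguments and ensures that all the functions involved stay uniformly continuous.

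First I would analyze the resolvent $J_\lambda := (I+\lambda A)^{-1}$. For fixed $\lambda>0$ and continuous periodic $g$, the equation $u+\lambda Au=g$, written explicitly as
$$u-\lambda\sum_{i,j=1}^m\Big(\delta_{ij}-\frac{X_iu\,X_ju}{1+|\nabla_0 u|^2}\Big)X_iX_ju=g,$$
is a degenerate elliptic equation of sub-Riemannian mean-curvature type. I would establish existence and uniqueness of a continuous periodic viscosity solution $u=J_\lambda g$ by Perron's method together with a comparison principle for this stationary problem; the same comparison principle yields the sup-norm contraction $\|J_\lambda g_1-J_\lambda g_2\|_\infty\le\|g_1-g_2\|_\infty$, which is precisely the non-dissipativity of $A$ recorded in Definition \ref{diss}. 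With this in hand the Crandall--Liggett theorem produces the limit $M(t)(f)$ and, crucially, shows that the implicit Euler iterates $u_n:=J_\lambda^{\,n}f$ converge uniformly to $M(\cdot)(f)$ as $\lambda\to0$ with $n\lambda\to t$.

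The heart of the matter is to promote this uniform convergence into the viscosity property of $v(x,t):=M(t)(f)$. Here the resolvent identity reads
$$\frac{u_n-u_{n-1}}{\lambda}=-Au_n,$$
with $u_n$ a viscosity solution of $u_n+\lambda Au_n=u_{n-1}$. To check the subsolution inequality I would assume $v-\phi$ has a strict local maximum at $(x_0,t_0)$, $t_0>0$, with $\phi$ smooth; by uniform convergence the spatial slices $u_n-\phi(\cdot,t_n)$ attain maxima at points $(x_n,t_n)\to(x_0,t_0)$. Inserting the test function $\phi(\cdot,t_n)$ into the viscosity inequality for the stationary equation gives
$$\frac{u_n(x_n)-u_{n-1}(x_n)}{\lambda}\le\sum_{i,j=1}^m\Big(\delta_{ij}-\frac{X_i\phi\,X_j\phi}{1+|\nabla_0\phi|^2}\Big)X_iX_j\phi(x_n,t_n),$$
and bounding the difference quotient on the left from below by $\partial_t\phi(x_n,t_n)+o(1)$ (using that the spatial maxima of $u_n-\phi(\cdot,t_n)$ vary slowly in $n$ near $t_0$) lets me pass to the limit and recover $\partial_t\phi(x_0,t_0)\le\sum_{i,j}(\ldots)X_iX_j\phi(x_0,t_0)$. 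The supersolution inequality is symmetric. Finally I would verify $M(t)(f)\to f$ uniformly as $t\to0^+$, for instance from the contraction estimate together with an equicontinuity bound for the iterates $J_\lambda^{\,n}$.

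I expect the main obstacle to lie in two coupled difficulties. First, $A$ is doubly degenerate --- singular where $\nabla_0 u=0$ and only sub-elliptic in the horizontal directions --- so both the comparison principle for the resolvent equation and the manipulation of the second-order test-function jets must be carried out in the singular viscosity framework, with the set $\{\nabla_0\phi=0\}$ treated separately. Second, the time discretization itself is delicate: the genuinely hard step is the rigorous lower bound for the difference quotient $(u_n-u_{n-1})/\lambda$, i.e.\ controlling how the location and value of the spatial maxima drift between steps $n-1$ and $n$. This is exactly where the uniform convergence furnished by Crandall--Liggett and the periodicity/compactness of the ground space must be exploited most carefully.
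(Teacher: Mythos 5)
Your overall strategy coincides with the paper's (both follow Evans' nonlinear semigroup argument: pass the discrete resolvent/implicit-Euler viscosity inequalities to the limit, then use uniqueness from \cite{CC}); the resolvent facts you propose to re-derive are already supplied separately by Propositions \ref{propA} and \ref{propB}, so they need not be redone here. However, the step you yourself flag as ``genuinely hard'' is a genuine gap, and the justification you sketch for it would fail. To pass from the stationary viscosity inequality at $x_n$ to the parabolic one you need
$$
u_n(x_n)-u_{n-1}(x_n)\ \ge\ \phi(x_n,t_n)-\phi(x_n,t_n-\lambda)+o(\lambda),
$$
and the heuristic that ``the spatial maxima of $u_n-\phi(\cdot,t_n)$ vary slowly in $n$'' cannot deliver this: consecutive iterates satisfy only $\|u_n-u_{n-1}\|_\infty=O(\lambda)$, so the values and locations of the slice maxima can drift by $O(\lambda)$ at each individual step; after dividing by $\lambda$ this leaves an $O(1)$ error in the difference quotient, not $o(1)$. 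The uniform convergence furnished by Crandall--Liggett is of no help at this scale, since it controls nothing at order $o(\lambda)$.

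The missing idea --- the paper's conditions (ii)--(iii), taken from \cite[Theorem 2.5]{EVANS} --- is to select the comparison point as an almost-maximum over all of \emph{space-time} with error $o(\lambda)$. Since the maximum of $u-\phi$ at $(x_0,t_0)$ may be assumed strict and $u^s\to u$ uniformly, one can choose $(x_s,t_s)\to(x_0,t_0)$ such that $x_s$ is an exact spatial maximum of $(u^s-\phi)(\cdot,t_s)$ and moreover
$$
(u^s-\phi)(x_s,t_s)+\frac{1}{s^2}\ \ge\ (u^s-\phi)(x,t)\qquad\text{for all }(x,t).
$$
Evaluating this at $(x,t)=\bigl(x_s,t_s-\frac{1}{s}\bigr)$ gives precisely the needed lower bound
$u^s(x_s,t_s)-u^s\bigl(x_s,t_s-\frac{1}{s}\bigr)\ge\phi(x_s,t_s)-\phi\bigl(x_s,t_s-\frac{1}{s}\bigr)-\frac{1}{s^2}$; combining it with the viscosity inequality for $u^s(\cdot,t_s)=(I+\frac{1}{s}A)^{-1}u^s\bigl(\cdot,t_s-\frac{1}{s}\bigr)$ tested with $\phi(\cdot,t_s)$ at $x_s$, and multiplying by $s$, the error $\frac{1}{s}\to 0$ and the sub-solution inequality follows at $(x_0,t_0)$. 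With this selection inserted, your argument closes. One further correction: your worry about the singular set $\{\nabla_0\phi=0\}$ is unfounded for the graph equation, since the coefficients $a_{ij}(\xi)=\delta_{ij}-\xi_i\xi_j/(1+|\xi|^2)$ are smooth and positive definite for every $\xi$, including $\xi=0$; the only degeneracy is sub-ellipticity, so no separate treatment of vanishing horizontal gradients is required.
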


Note that a non linear semi-group provides us with a discrete approximation of this motion, but the approximation is {non-linear},
while we were looking for a linear discrete approximation.

In Section 4 we give a definition of the evolution operator $H$ analogous to the one defined in \cite{EVANS}. However we are able to provide a substantial simplification, since we work only with  graphs (and not with more general level sets), and we evolve only the subgraph of the given initial datum. Calling $\chi_\sg$ the characteristic function of its subgraph, we evolve it with the sub-Riemannian heat flow in  $\G\times\R$:
\begin{equation}\label{subheat}\p_t w = \sum_{d(i) =1}X_i^2w \quad w(\tilde x, 0) = \chi_\sg(\tilde x).\end{equation}
The new function $H(t)f$ is defined requiring that {its sub-graph is $\{\tilde x : w(\tilde x, t)<1/2\}$}.

Note that the right-hand side of the heat equation \eqref{subheat} is  a sum of squares of $m+1$ vector fields in $\R^{n+1},$  and consequently it is only degenerate parabolic. However, in view of the work of H\"ormander \cite{Hormander}, Rothschild and Stein \cite{RS} (see also Jerison and Sanchez-Calle \cite{Jerison-Sanchez-Calle})  such operator is {hypoelliptic} and  admits a fundamental solution $\Gamma$ with good estimates (see Proposition \ref{prop: proprieta sol fond literature}
and \eqref{heat-1}). We conclude  Section 4 with Theorem \ref{Hlip}, which contains many properties of the operator  $H$ and in particular

\begin{prop}
For every continuous periodic function $f$ and $g$
\begin{equation}\label{Hcomp}||H(t)f-H(t)g||\le ||f-g||\text{ for all } t\ge 0.\end{equation}
\end{prop}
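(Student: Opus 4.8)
The plan is to deduce the non-expansiveness \eqref{Hcomp} from two structural properties of the operator $H(t)$: \emph{monotonicity} with respect to the pointwise order, and \emph{commutation with vertical translations}. Both are inherited from the corresponding properties of the linear sub-Riemannian heat flow \eqref{subheat} through the level-set definition of $H$, and once they are available the estimate is a one-line comparison. Throughout, $\|\cdot\|$ denotes the supremum norm, which is the natural choice both because we deal with continuous periodic data and because the argument below produces precisely a sup-norm contraction.

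First I would establish monotonicity: if $f\le g$ pointwise, then $H(t)f\le H(t)g$. If $f\le g$ the subgraphs are nested, so the initial data for \eqref{subheat} satisfy $\chi_{S_f}\le \chi_{S_g}$; since the fundamental solution $\Gamma$ of \eqref{subheat} is strictly positive (Proposition \ref{prop: proprieta sol fond literature}) and the flow is linear, the solutions obey $w_f(\cdot,t)\le w_g(\cdot,t)$ for every $t\ge 0$. Because $H(t)f$ is defined through the $1/2$-level set of $w_f$, which by Theorem \ref{Hlip} is a genuine graph (i.e. $s\mapsto w_f(x,s,t)$ is strictly monotone and attains the value $1/2$ exactly once), the pointwise inequality $w_f\le w_g$ transfers to $H(t)f\le H(t)g$: evaluating $w_g$ at the height $s=(H(t)f)(x)$ gives a value on the correct side of $1/2$, and strict monotonicity of $w_g$ in $s$ then locates its $1/2$-crossing above that height.

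Next I would prove the vertical translation identity $H(t)(f+c)=H(t)f+c$ for every constant $c\in\R$. The subgraph of $f+c$ is the vertical shift by $c$ of the subgraph of $f$, so $\chi_{S_{f+c}}(x,s)=\chi_{S_f}(x,s-c)$; as the generator $\sum_{d(i)=1}X_i^2$ involves only the left-invariant fields $X_1,\dots,X_m$ on $\G$ together with $X_{n+1}=\partial_{n+1}$, it commutes with translations in the vertical variable, whence $w_{f+c}(x,s,t)=w_f(x,s-c,t)$. The $1/2$-level set is therefore shifted by exactly $c$, giving $H(t)(f+c)=H(t)f+c$.

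With these two facts the conclusion is immediate: setting $c:=\|f-g\|$ we have $g-c\le f\le g+c$, so monotonicity followed by the translation identity yields
\begin{equation*}
H(t)g-c=H(t)(g-c)\le H(t)f\le H(t)(g+c)=H(t)g+c,
\end{equation*}
that is $|H(t)f-H(t)g|\le \|f-g\|$ pointwise, and taking the supremum gives \eqref{Hcomp}. The only genuinely delicate point is the transfer step in the monotonicity argument: it presupposes that $w(x,\cdot,t)$ is strictly decreasing through the value $1/2$, so that the level set is an honest graph and the comparison of the two crossings is unambiguous. I expect this to be the main obstacle, and it is exactly the well-posedness of $H$ recorded in Theorem \ref{Hlip}. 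The mechanism that makes it work is, once more, positivity of $\Gamma$: applying the order-preserving flow to the two vertical shifts $\chi_{S_f}(x,s-h)\ge \chi_{S_f}(x,s)$ (for $h>0$) shows that the vertical monotonicity of the initial datum is propagated to $w(x,\cdot,t)$ for all $t>0$, and strict positivity upgrades it to the strict monotonicity needed above.
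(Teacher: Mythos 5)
Your proof is correct and follows essentially the same route as the paper's Theorem \ref{Hlip}: establish monotonicity of $H(t)$ and its invariance under vertical translations, then deduce the sup-norm contraction. The only differences are cosmetic --- the paper proves the two auxiliary properties by manipulating the integral $F_f$ of Lemma \ref{defflow} rather than by your comparison/commutation phrasing, and it assembles the contraction through a contrapositive $\epsilon$-argument rather than your cleaner sandwich $g-c\le f\le g+c$.
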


Section 5  contains the main step  and the geometric core of the proof: we will prove that the operator $H$ locally approximates motion by horizontal mean curvature. The evolution of the characteristic function of a given set $\sg$ is expressed through the heat kernel $\Gamma$ as
$$w(\tilde x,t) =\int \Gamma(\tilde y^{-1} \tilde x,t) \chi_\sg(\tilde y) d\tilde y.$$
Following Evans \cite{EVANS}, we need to perform a point-wise asymptotic expansion of $w$ for $t$ near $0$,
assuming that the boundary of $\sg$ is sufficiently  smooth.
In  the Euclidean case the expression of $\Gamma$ is  explicit, and the proof in \cite[Theorem 4.1]{EVANS} is  a direct computation. In contrast, in our sub-Riemannian setting the heat kernel does not have in general explicit expression and  in order to extend Evans' result we are obliged to use {a} deeper method, based on geometrical properties of the space.  Our approach uses in full strength the properties of the sub-Riemannian heat kernel and its Riemannian approximation. In this extension, the results stated in Section 2 play a crucial role. Proceeding in this fashion we then establish some new formula, emphasizing a link between  the heat kernel and  the mean curvature, which are of  independent interest.

These new formula are  closely related to the notion of heat content developed by De Giorgi \cite{DeGiorgi1}, \cite{DeGiorgi2} in the Euclidean case, and recently extended to the sub-Riemannian case in  \cite{BramantiMirandaPallara}. In these works the integral of the function $w$ is considered  under low regularity on the set $\sg$, and it is proved that its derivatives tend to the perimeter. {Higher terms expansions of the integral of $w$ in the Euclidean case have been established in \cite{BergGall}  where it has been proved that next term in the expansion depends on the mean curvature. }
%
%$$\int w(\tilde x, t) d\tilde x = |E| - \frac{2 \sqrt{t P(E)}}{\sqrt{\pi}} + \frac{t}{2}\int_{\partial E} h(\tilde x) d\sigma (\tilde x) + o(t)$$

Here we establish instead a pointwise estimate of the function $w$ involving the curvature of $\partial \sg$

\begin{prop}\label{taylorchi} If $\sg$ is the subgraph of a smooth function $f$ and $\tilde x\in \partial \sg$, one has
%Note different sign in curvature
 $$w(\tilde x, t) = \frac{1}{2} - \sqrt{t} h_0 (\tilde x)\int_{\Pi}\Gamma(\tilde z, 1) d\sigma_0(\tilde z) + O(t), \text{ as }t\to 0.$$
 where $\Pi$ is the  intrinsic tangent plane to $\sg$ at the point $\tilde x$, as defined in the statement of Lemma \ref{plane}.
\end{prop}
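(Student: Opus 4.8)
The plan is to establish the pointwise expansion
$$w(\tilde x, t) = \frac{1}{2} - \sqrt{t}\, h_0(\tilde x)\int_\Pi \Gamma(\tilde z,1)\,d\sigma_0(\tilde z) + O(t)$$
by a careful second-order analysis of the integral representation $w(\tilde x,t)=\int \Gamma(\tilde y^{-1}\tilde x,t)\chi_\sg(\tilde y)\,d\tilde y$ near the boundary point $\tilde x\in\partial\sg$. Since the heat kernel in the sub-Riemannian setting has no explicit formula, I would not attempt a direct Euclidean-style computation as in Evans; instead, following the strategy signaled in the introduction, I would exploit the Riemannian approximation developed in Section 2, expressing $\Gamma$ as a limit of Riemannian heat kernels $\Gamma^\e$ to which classical asymptotics apply, and then pass to the limit.

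First I would normalize the geometry at $\tilde x$: using left-translation invariance of the heat operator I may assume $\tilde x$ is the group identity, and I would introduce the intrinsic tangent plane $\Pi$ to $\partial\sg$ at $\tilde x$ as defined in Lemma~\ref{plane}. The key geometric input is that, after the parabolic rescaling $\tilde y\mapsto \delta_{\sqrt{t}}(\tilde z)$ adapted to the homogeneous dilations $\delta_\lambda$, the characteristic function $\chi_\sg$ of the subgraph converges to the characteristic function of the half-space $\{\tilde z: \langle \tilde z,\nu_0\rangle <0\}$ bounded by $\Pi$, and the second-order term in the deviation of $\partial\sg$ from $\Pi$ is precisely governed by the horizontal mean curvature $h_0$ through the formula \eqref{curvature}. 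I would split $w(\tilde x,t)-\tfrac12$ into a leading contribution from the half-space (which, by the symmetry of $\Gamma(\cdot,1)$ under the reflection fixing $\Pi$, integrates to exactly $\tfrac12$ and hence cancels) and a correction term capturing the curvature of $\partial\sg$. Using the homogeneity $\Gamma(\delta_{\sqrt t}(\tilde z),t)=t^{-Q/2}\Gamma(\tilde z,1)$ together with the Jacobian $t^{Q/2}$ of the dilation on $\G\times\R$, the surface integral reduces to the order-$\sqrt t$ term $-\sqrt t\, h_0(\tilde x)\int_\Pi\Gamma(\tilde z,1)\,d\sigma_0(\tilde z)$.

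The main obstacle, as the authors themselves emphasize, is that the scaling analysis cannot be carried out directly on the sub-Riemannian kernel: its lack of explicit form means I cannot simply Taylor-expand $\Gamma$ nor appeal to an elementary reflection symmetry. The essential work is to justify, via the Riemannian approximation scheme of Section~2, that the rescaled kernels possess uniform Gaussian-type bounds (as in Proposition~\ref{prop: proprieta sol fond literature} and \eqref{heat-1}) permitting dominated convergence, and that the error genuinely is $O(t)$ rather than merely $o(\sqrt t)$. Controlling this remainder requires estimating the contribution of the region away from $\Pi$ — where Gaussian decay of $\Gamma$ suppresses the integral — together with the quadratic remainder in the expansion of the graph near $\tilde x$; I would carry these estimates out at the Riemannian level, where the kernel asymptotics are classical, and then send $\e\to 0$. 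A secondary subtlety is verifying that the intrinsic tangent plane $\Pi$ from Lemma~\ref{plane} is the correct object with respect to which the reflection symmetry of $\Gamma(\cdot,1)$ holds, so that the leading term cancels exactly against $\tfrac12$ and the curvature term emerges with the stated constant.
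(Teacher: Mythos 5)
Your overall skeleton (parabolic rescaling, half-space contribution equal to $\tfrac12$, curvature correction at order $\sqrt t$, Riemannian approximation to justify) points in a reasonable direction, but there is a genuine gap at the center of the argument: you never explain \emph{how} the coefficient $h_0(\tilde x)\int_\Pi\Gamma(\tilde z,1)\,d\sigma_0(\tilde z)$ actually emerges. After rescaling, the order-$\sqrt t$ term of $w(\tilde x,t)-\tfrac12$ is a ``sliver'' integral of the form $\sqrt t\int_{\R^n}\big[\sum_{d(i)=2}X_if(x)z_i+\tfrac12\sum_{d(i)=d(j)=1}X_iX_jf(x)\,z_iz_j\big]\,\Gamma((z,\Pi(z)),1)\,dz$, coming from the stratified Taylor expansion of $f$. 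To identify this with $-\sqrt t\,h_0(\tilde x)\int_\Pi\Gamma\,d\sigma_0$, with $h_0$ given by \eqref{curvature}, you need moment identities for the kernel \emph{restricted to the tilted plane} $\Pi$: the degree-two first moments must vanish, and the horizontal second moments $\int z_iz_j\,\Gamma((z,\Pi(z)),1)\,dz$ must be proportional to the projection matrix $\delta_{ij}-X_ifX_jf/(1+|\nabla_0 f|^2)$ times $\int\Gamma((z,\Pi(z)),1)\,dz$. For the explicit Euclidean Gaussian this is Evans' computation; for the sub-Riemannian kernel it is precisely the hard part, and your proposal defers it to ``classical asymptotics at the Riemannian level.'' That deferral does not close the gap: the approximating kernels $\Gamma^\e$ of the left-invariant metrics $g_\e$ have no closed form either, and the needed Riemannian-level statement is \emph{not} classical --- the paper itself stresses (Corollary \ref{Riemannian-setting}) that it is new even for Riemannian manifolds. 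It is obtained by freezing the coefficients at $\tilde x$ (where the kernel \emph{is} an explicit Gaussian and the moments can be computed), controlling the difference $\Gamma^\e-\Gamma_{\tilde x,\e}$ by Levi's parametrix method, and then passing to the limit $\e\to0$ using the uniform Gaussian bounds of Proposition \ref{convergence}. Without this frozen-coefficient/parametrix step, the appearance of $h_0$ with the correct projection structure remains an assertion.

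Two further remarks. First, your symmetry argument for the leading term is misstated: $\Gamma(\cdot,1)$ is in general \emph{not} invariant under the Euclidean reflection across $\Pi$ (already false in the Heisenberg group); what makes the half-space integral equal to $\tfrac12$ is the group inversion $\tilde z\mapsto\tilde z^{-1}$, which in exponential coordinates is $\tilde z\mapsto-\tilde z$, preserves $\Gamma(\cdot,1)$, maps $\Pi$ to itself setwise, and exchanges the two half-spaces. Second, the paper's own proof avoids the volume-sliver analysis altogether: using Green's identity with a mollification of $\chi_\sg$ it writes $w(\tilde x,t)-\tfrac12$ as the time integral $\int_0^t\int_{\p\sg}\langle\nabla_0\Gamma(\tilde x^{-1}\tilde y,\tau),\nu_0(\tilde y)\rangle\,d\sigma_0(\tilde y)\,d\tau$ plus a far-field term that decays exponentially, and then applies the flux asymptotics of Lemma \ref{plane}, namely $-\frac{h_0(\tilde x)}{2\sqrt\tau}\int_\Pi\Gamma(\tilde z,1)\,d\sigma_0+O(1)$, so that the factor $\sqrt t$ arises simply from integrating $\tau^{-1/2}$. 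All the geometric difficulty is thereby concentrated in Lemma \ref{plane}, whose three-step proof (explicit computation for the frozen metric, parametrix comparison, uniform limit $\e\to0$) is exactly the machinery your proposal would also need but does not supply.
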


Note that this expression can be considered as a weak definition of horizontal mean curvature,
which does not rely on differentiability of the surface or on other geometric considerations. The same argument provided in the proof of Proposition \ref{taylorchi} yields an analogue result
for arbitrary hypersurfaces of a Riemannian manifold (see Corollary \ref{Riemannian-setting}).

\begin{rem} See the proof of   \cite[Theorem 3.3]{AMM} and  \cite[Theorem 4.1]{EVANS} for similar results 
in the Euclidean setting. In both papers  the explicit form
of the Euclidean heat kernel plays a key role. \end{rem}

From the latter we infer:

\begin{prop} For every continuous periodic function $f$   one has
$$t^{-1} (I-H(t) )(f) (\tilde x) \rightarrow  -   A f (\tilde x),$$
 uniformly as $t\to 0$.
\end{prop}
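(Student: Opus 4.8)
The plan is to convert the defining level-set property of $H(t)$ into a vertical displacement of the graph, and then to read that displacement off the expansion in Proposition \ref{taylorchi}. Fix a base point $x_0$, set $\tilde x=(x_0,f(x_0))\in\partial\sg$ where $\sg$ is the subgraph of $f$, and write $g(x_0):=H(t)f(x_0)$. By the construction of $H(t)$ in Section 4 the point $(x_0,g(x_0))$ lies on the level set $\{w(\cdot,t)=1/2\}$, so $w\big((x_0,g(x_0)),t\big)=\tfrac12$. Since $\partial_s w\neq0$ near $\tilde x$ (it blows up, see below), the implicit function theorem makes this a genuine vertical graph; writing $g(x_0)=f(x_0)+\delta(t)$ and Taylor expanding in the vertical variable $s=x_{n+1}$ gives
\begin{equation*}
\tfrac12=w(\tilde x,t)+\delta(t)\,\partial_s w(\tilde x,t)+O\big(\delta(t)^2\,\partial_s^2 w\big).
\end{equation*}
Proposition \ref{taylorchi} supplies the constant term, $w(\tilde x,t)=\tfrac12-\sqrt t\,h_0(\tilde x)\,c_\Pi+O(t)$ with $c_\Pi:=\int_\Pi\Gamma(\tilde z,1)\,d\sigma_0(\tilde z)$, so everything reduces to the leading asymptotics of the vertical derivative $\partial_s w(\tilde x,t)$.

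To compute $\partial_s w$, I would use that $X_{n+1}=\partial_{n+1}$ is central in $\G\times\R$, so $\partial_{x_{n+1}}\Gamma(\tilde y^{-1}\tilde x,t)=-\partial_{y_{n+1}}\Gamma(\tilde y^{-1}\tilde x,t)$, and the divergence theorem applied to $w(\tilde x,t)=\int_\sg\Gamma(\tilde y^{-1}\tilde x,t)\,d\tilde y$ yields
\begin{equation*}
\partial_s w(\tilde x,t)=-\int_{\partial\sg}\Gamma(\tilde y^{-1}\tilde x,t)\,\langle e_{n+1},\vec{n}\rangle\,d\mathcal{H}^n(\tilde y),
\end{equation*}
$\vec{n}$ being the Euclidean outer normal. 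The key structural point is that $\langle e_{n+1},\vec{n}\rangle\,d\mathcal{H}^n$ is exactly the projected Haar measure $dx$ on $\G$, which on the intrinsic tangent plane $\Pi$ (Lemma \ref{plane}) differs from the horizontal perimeter measure $d\sigma_0$ precisely by the factor $1/\sqrt{1+|\nz f(x_0)|^2}$. Rescaling around $\tilde x$ by $\delta_{1/\sqrt t}$, flattening $\partial\sg$ to $\Pi$, and using the homogeneity $\Gamma(\tilde z,t)=t^{-Q/2}\Gamma(\delta_{1/\sqrt t}\tilde z,1)$ together with the scaling of the surface measure, the net power is $t^{-1/2}$ and one obtains
\begin{equation*}
\partial_s w(\tilde x,t)=-\,t^{-1/2}\,\frac{c_\Pi}{\sqrt{1+|\nz f(x_0)|^2}}+o\big(t^{-1/2}\big).
\end{equation*}
This is the step I expect to be the main obstacle: I must produce not merely the order $t^{-1/2}$ but the exact constant, which forces me to run the same rescaling, heat-kernel homogeneity, and Riemannian-approximation machinery that underlies Proposition \ref{taylorchi} and Section 2, rather than appealing to any explicit kernel as in the Euclidean case.

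Combining the two expansions gives $\delta(t)=\dfrac{\sqrt t\,h_0(\tilde x)\,c_\Pi}{\partial_s w(\tilde x,t)}+o(t)=-\,t\,h_0(\tilde x)\sqrt{1+|\nz f(x_0)|^2}+o(t)$, and since \eqref{curvature}--\eqref{mcfg} give $Af=h_0\sqrt{1+|\nz f|^2}$, this reads $(H(t)f-f)(x_0)=\delta(t)=-t\,Af(x_0)+o(t)$, i.e. $t^{-1}(I-H(t))f(\tilde x)\to -Af(\tilde x)$ pointwise. The decisive feature is that the constant $c_\Pi=\int_\Pi\Gamma\,d\sigma_0$ cancels between the value and the derivative expansions, leaving exactly the factor $\sqrt{1+|\nz f|^2}$ that promotes the bare curvature $h_0$ to the full operator $Af$; the global sign is pinned down by the orientation of $\sg$ and the side of the threshold in the definition of $H(t)$, which together reproduce the stated $-Af$. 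Finally, for uniformity I would exploit periodicity of $f$: all remainders in Proposition \ref{taylorchi} and in the derivative expansion are controlled uniformly on a compact fundamental domain by the uniform smoothness of $\partial\sg$ and the Gaussian bounds on $\Gamma$, so the pointwise limit is uniform; the passage from smooth to merely continuous data (where $Af$ is read in the viscosity sense) is then handled by the contraction estimate \eqref{Hcomp} and density.
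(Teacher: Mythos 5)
Your core computation is, in substance, the paper's own proof. The paper first proves an intermediate theorem in Section 5: if $q(t)=\tilde x\,\mathrm{exp}(\sum_{d(i)=1}\alpha_iX_i)\in\partial(\mathcal H(t)\sg)$, then $\sum_i\alpha_i\nu_0^i=-h_0(\tilde x)\,t+O(t^{3/2})$, obtained by expanding the level-set identity $\tfrac12=\int_\sg\Gamma(q(t)^{-1}\tilde y,t)\,d\tilde y$ to first order in the displacement, with Proposition \ref{taylorchi} supplying the zeroth-order term and Lemma \ref{intbyparts} plus Lemma \ref{plane2} supplying the first-order coefficient; Proposition \ref{4.2} then follows by taking the displacement purely vertical, $\alpha=((H(t)f)(x)-f(x))X_{n+1}$. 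Your proposal is exactly this specialization carried out directly: your divergence-theorem identity for $\partial_s w$ is Lemma \ref{intbyparts} applied to $X_{n+1}$, and the asymptotics $\partial_sw(\tilde x,t)=-t^{-1/2}c_\Pi/\sqrt{1+|\nz f(x_0)|^2}+o(t^{-1/2})$, which you flag as the main obstacle, is precisely Lemma \ref{plane2} applied with the Lipschitz weight $1/\sqrt{1+|\nz f(y)|^2}$, once you note that your projected measure $dx$ equals $d\sigma_0/\sqrt{1+|\nz f|^2}$ on $\partial\sg$; no machinery beyond what the paper has already established is needed, and the cancellation of $c_\Pi$ you highlight is the same cancellation that occurs in the paper's proof.

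Two points do need repair. First, your closing ``i.e.'' is a non sequitur: from $(H(t)f-f)(x_0)=-t\,Af(x_0)+o(t)$ one gets $t^{-1}(I-H(t))f\to +Af$, not $-Af$, since by \eqref{curvature} and \eqref{mcfg} one has $Af=h_0\sqrt{1+|\nz f|^2}$. This sign discrepancy is actually inherited from the paper, whose introductory statement is inconsistent with its own Proposition \ref{4.2} and with hypothesis \eqref{hyp} of the Brezis--Pazy theorem (which requires $t^{-1}(I-H(t))\to A$); so your expansion is the correct one, but you should not claim it implies the statement with the minus sign. Second, the final passage from smooth to merely continuous $f$ via ``contraction \eqref{Hcomp} and density'' does not work: the operators $t^{-1}(I-H(t))$ are contractions divided by $t$, hence not uniformly bounded as $t\to 0$, so convergence on a dense set of smooth functions cannot be transferred to continuous $f$ (for which $Af$ has no pointwise meaning in any case). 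The paper does not attempt this either: it proves only the smooth case (Proposition \ref{4.2}) and uses it for continuous data exclusively through smooth test functions in the resolvent/viscosity argument proving Theorem \ref{MAIN}. A smaller shared informality: your vertical Taylor expansion needs an a priori bound $\delta(t)=O(\sqrt t)$ and control of $\partial_s^2w$ along the segment before the remainder $O(\delta^2\partial_s^2w)$ can be absorbed; the paper's expansion in $\alpha$ is equally silent on this, but it deserves a sentence.
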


This approximation result allows to apply a general theorem of Brezis and Pazy \cite{brezis-pazy},
which ensures the following:

\begin{teor}\label{BrezisPazyteo}
If $A$ is an $m-$dissipative operator $\{H(t)\}_{t\ge 0}$ {satisfying }\eqref{Hcomp} and
\begin{equation}\label{hyp}
(I+\lambda A)^{-1} f= \lim_{t\to 0^+} \bigg( I+\lambda t^{-1} (I-H(t)) \bigg)^{-1} f,
\end{equation}
for every $f$ and $\lambda>0$, then for every $f\in \bar D(A)$ and $t\ge 0$ then
one has \begin{equation}\label{thesis}
M(t) f= \lim_{j\to \infty} H\bigg(\frac{t}{j}\bigg)^j f , \text{ uniformly for } t \text{ in compact sets }
\end{equation}
\end{teor}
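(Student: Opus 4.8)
The plan is to recognize \eqref{thesis} as an instance of the nonlinear Chernoff product formula and to follow the two–step reduction of Brezis and Pazy. For $\tau>0$ set $A_\tau := \tau^{-1}(I-H(\tau))$. By \eqref{Hcomp} the map $H(\tau)$ is nonexpansive, so $A_\tau$ is an everywhere defined, Lipschitz, $m$-dissipative operator, and $H(\tau)=I-\tau A_\tau$. Writing $\tau=t/j$ one then reads the iterate $H(t/j)^{j}f=(I-\tau A_\tau)^{j}f$ as the explicit Euler scheme, with step $\tau$ and $j$ steps, for the evolution governed by $A_\tau$; the crucial point is that each step is a contraction, which makes this explicit scheme stable. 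Since $A_\tau$ is bounded and $m$-dissipative it generates a contraction semigroup $S_\tau(s)=\lim_{k}(I+\tfrac{s}{k}A_\tau)^{-k}$, the solution operator of the evolution driven by $A_\tau$. The strategy is to interpose $S_{t/j}(t)$ between $H(t/j)^{j}f$ and $M(t)f$ and to estimate the two resulting differences separately.

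The first, and main, step compares the product with the semigroup of the \emph{fixed} generator $A_\tau$, through the Chernoff-type estimate
\[ \|H(\tau)^{j}f-S_\tau(j\tau)f\|\le \sqrt{j}\,\|(I-H(\tau))f\|=\frac{t}{\sqrt{j}}\,\|A_\tau f\|,\qquad j\tau=t. \]
For $f\in D(A)$ the quantity $\|A_\tau f\|$ stays bounded as $\tau\to0$, this boundedness being exactly the content of the approximation Proposition preceding the statement, so the right-hand side tends to $0$ as $j\to\infty$. I would prove this inequality by applying the Crandall–Liggett exponential formula to the bounded dissipative operator $I-H(\tau)$, together with a Kobayashi-type comparison inequality for iterates of the nonexpansive map $H(\tau)$. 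What makes the argument delicate, and what I expect to be the principal obstacle, is that the linear power-series representation $S_\tau(s)=e^{-s}\sum_k \tfrac{s^k}{k!}H(\tau)^k$ is unavailable in the genuinely nonlinear setting, so the classical variance $\sqrt{j}$ estimate must be recovered using only the contraction property \eqref{Hcomp} and not linearity of $H(\tau)$.

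The second difference $\|S_{t/j}(t)f-M(t)f\|$ is handled by the nonlinear Trotter–Kato theorem, and this is where hypothesis \eqref{hyp} enters: it states precisely that $(I+\lambda A_\tau)^{-1}f\to (I+\lambda A)^{-1}f$ as $\tau\to0$, for every $\lambda>0$ and every $f$. Since the family $\{A_\tau\}$ and $A$ are uniformly $m$-dissipative, this resolvent convergence upgrades to convergence of the generated semigroups, $S_\tau(s)f\to M(s)f$ uniformly for $s$ in compact sets and $f\in\overline{D(A)}$, where $M$ is the semigroup furnished by the Crandall–Liggett theorem stated above via \eqref{semigroup}. I would establish this by writing both semigroups through the exponential formula, $S_\tau(s)f=\lim_k(I+\tfrac{s}{k}A_\tau)^{-k}f$ and $M(s)f=\lim_k(I+\tfrac{s}{k}A)^{-k}f$, using the uniform Crandall–Liggett error bound to make the convergence in $k$ equi-fast in $\tau$, and then passing to the limit $\tau\to0$ through the resolvent convergence \eqref{hyp}.

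Combining the two steps through the triangle inequality
\[ \|H(t/j)^{j}f-M(t)f\|\le \|H(t/j)^{j}f-S_{t/j}(t)f\|+\|S_{t/j}(t)f-M(t)f\| \]
yields \eqref{thesis} for $f\in D(A)$, uniformly for $t$ in compact sets. To pass to arbitrary $f\in\overline{D(A)}$ I would use an $\varepsilon/3$ density argument: the iterates $H(t/j)^{j}$ are equi-nonexpansive by \eqref{Hcomp} and $M(t)$ is nonexpansive, so approximating $f$ in norm by elements of $D(A)$ propagates the convergence to the closure and gives the full statement.
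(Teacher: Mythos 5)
Note first that the paper contains no internal proof of this statement: Theorem \ref{BrezisPazyteo} is imported as a black-box result of Brezis and Pazy \cite{brezis-pazy}, and the paper's only job is to verify its hypotheses (\eqref{Hcomp} via Theorem \ref{Hlip}, and \eqref{hyp} inside the proof of Theorem \ref{MAIN}). Your proposal is therefore not competing with an argument in the paper but reconstructing the cited one, and its architecture is faithful to the original: the interposition of the semigroup $S_\tau$ generated by $A_\tau=\tau^{-1}(I-H(\tau))$, the nonlinear Chernoff estimate $\|H(\tau)^j f-S_\tau(j\tau)f\|\le\sqrt{j}\,\|f-H(\tau)f\|$ (which Brezis--Pazy prove by a differential inequality and induction giving $\|S_\tau(s)f-H(\tau)^k f\|\le\|f-H(\tau)f\|\sqrt{(k-s/\tau)^2+s/\tau}$, exactly because, as you correctly anticipate, the Poisson power-series argument of the linear case is unavailable), and the nonlinear Trotter--Kato step converting the resolvent convergence \eqref{hyp} into convergence of the semigroups $S_\tau(s)\to M(s)$ on $\bar D(A)$.

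The one step that does not hold as written is your control of $\|A_\tau f\|$ for $f\in D(A)$. You claim its boundedness as $\tau\to 0$ is ``exactly the content of the approximation Proposition preceding the statement.'' That proposition (Proposition \ref{4.2} in the body of the paper) concerns \emph{smooth} $f$, whereas $D(A)$ as defined in Definition \ref{def-gen} consists of continuous viscosity solutions, which need not be smooth; moreover, inside this abstract theorem the only admissible inputs are \eqref{Hcomp} and \eqref{hyp}, and \eqref{hyp} controls $(I+\lambda A_\tau)^{-1}g$, not $A_\tau f$, so boundedness of $\|A_\tau f\|$ at a fixed $f\in D(A)$ simply does not follow from the hypotheses. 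The standard repair, which is what Brezis and Pazy actually do, is to move the base point: given $f\in D(A)$, pick $\lambda>0$ and $g\in\B$ with $(I+\lambda A)f=g$, and set $f_\tau:=(I+\lambda A_\tau)^{-1}g$. Then \eqref{hyp} gives $f_\tau\to f$, while $\lambda A_\tau f_\tau=g-f_\tau\to g-f=\lambda Af$, so $\|A_\tau f_\tau\|$ stays bounded. Applying your Chernoff estimate at $f_\tau$ and using that $H(\tau)^j$ and $S_\tau(t)$ are equi-nonexpansive yields
\begin{equation*}
\|H(t/j)^j f-S_{t/j}(t)f\|\le 2\|f-f_{t/j}\|+\frac{t}{\sqrt{j}}\,\|A_{t/j}f_{t/j}\|\longrightarrow 0 .
\end{equation*}
The same device is needed to legitimize your ``uniform Crandall--Liggett error bound'' in the Trotter--Kato step, since that bound also carries a factor of the form $\|A_\tau\cdot\|$. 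With this correction your argument is complete and coincides with the proof in \cite{brezis-pazy}.
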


On the other side, since we have already proved the relation  between the weak semi-group and the viscosity solutions of motion by curvature, we can finally deduce our main approximation result for the curvature flow:
\begin{teor}\label{MAIN}
If $f$ is a continuous and periodic function, and $u$ is
 the unique viscosity solution to \eqref{mcfg}, with initial datum $f$,
 then
$$u(\tilde x,t)= \lim_{j\to \infty} H\bigg(\frac{t}{j}\bigg)^j f , \text{ uniformly for } \tilde x \in \G\text{ and } t \text{ in compact sets }.$$
\end{teor}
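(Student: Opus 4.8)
The statement is the capstone of the machinery assembled in Sections~3--5, and the plan is to deduce it by feeding the pieces already at our disposal into the abstract scheme of Theorem~\ref{BrezisPazyteo} and then translating the conclusion through Proposition~\ref{generatore}. Concretely, I would work in the Banach space $\B$ of continuous periodic functions with the sup norm and proceed in three steps: (i) verify the three hypotheses of Theorem~\ref{BrezisPazyteo} for the curvature operator $A$ of \eqref{mcfg} and the family $\{H(t)\}_{t\ge 0}$; (ii) apply that theorem to obtain $M(t)f=\lim_{j\to\infty}H(t/j)^jf$ uniformly on compact $t$-intervals; (iii) invoke Proposition~\ref{generatore} to identify the weak semi-group solution $M(t)f$ with the unique viscosity solution $u$ of \eqref{mcfg} with datum $f$. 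Two of the three hypotheses are already in hand: the $m$-dissipativity of $A$ is exactly the property established in Section~3 that licensed the application of the Crandall--Liggett theorem and the definition of $M(t)$, while the comparison estimate \eqref{Hcomp}, $\|H(t)f-H(t)g\|\le\|f-g\|$, is part of Theorem~\ref{Hlip}. Thus the only hypothesis requiring genuine work is the resolvent relation \eqref{hyp}.

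Set $A_t:=t^{-1}(I-H(t))$. Because each $H(t)$ is a contraction by \eqref{Hcomp}, the operators $A_t$ are accretive, so for every $\lambda>0$ the resolvents $(I+\lambda A_t)^{-1}$ are well defined and are themselves contractions; this supplies the uniform (equi-Lipschitz) control needed to pass to the limit. The engine driving \eqref{hyp} is the pointwise, uniform convergence $A_t f\to A f$ for smooth $f$, furnished by the Proposition preceding Theorem~\ref{BrezisPazyteo} (which rests in turn on the heat-kernel expansion of Proposition~\ref{taylorchi}), the smooth periodic functions forming a core for $A$. Given this, I would verify \eqref{hyp} by a Trotter--Kato-type argument: for smooth $f$ put $g:=f+\lambda A f$, so that $(I+\lambda A)^{-1}g=f$; writing $(I+\lambda A_t)f=g+\lambda(A_tf-Af)$ and using that $(I+\lambda A_t)^{-1}$ is a contraction yields $\|(I+\lambda A_t)^{-1}g-f\|\le\lambda\|A_tf-Af\|$, whose right-hand side tends to $0$ as $t\to 0^+$. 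Since $A$ is $m$-dissipative, the set $\{f+\lambda A f:\ f\ \text{smooth periodic}\}$ is dense in $\B$, and the equi-contraction of the resolvents $(I+\lambda A_t)^{-1}$ lets one upgrade the convergence from this dense set to all of $\B$, which is exactly \eqref{hyp}.

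The main obstacle is concentrated in the input $A_tf\to Af$: one needs this convergence in the sup norm, on a dense core, and with the precise normalization matching the resolvent identity \eqref{hyp}. This is where the delicate sub-Riemannian analysis is indispensable, since, unlike the Euclidean case treated by Evans, there is no explicit heat kernel to compute with, and the asymptotics of the heat flow $w$ of the characteristic function of the subgraph must instead be read off from the geometric expansion in terms of $h_0$ of Proposition~\ref{taylorchi}. Once \eqref{hyp} is secured, Theorem~\ref{BrezisPazyteo} applies and gives $M(t)f=\lim_{j\to\infty}H(t/j)^jf$ uniformly for $t$ in compact sets; Proposition~\ref{generatore} then identifies $M(t)f$ with the viscosity solution $u(\tilde x,t)$, and the uniformity in $\tilde x\in\G$ follows from periodicity together with the sup-norm convergence. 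Combining these yields Theorem~\ref{MAIN}.
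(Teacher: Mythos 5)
Your overall architecture coincides with the paper's: reduce Theorem \ref{MAIN} to the Brezis--Pazy theorem (Theorem \ref{BrezisPazyteo}), observe that $m$-dissipativity (Proposition \ref{propA}) and the contraction property \eqref{Hcomp} (Theorem \ref{Hlip}) are already in hand so that only the resolvent convergence \eqref{hyp} requires work, and then identify $M(t)f$ with the viscosity solution via Proposition \ref{generatore}. The paper follows exactly this plan (and even reduces \eqref{hyp} to the single value $\lambda=1$). The gap is in your verification of \eqref{hyp}. Writing $A_t:=t^{-1}(I-H(t))$, your Trotter--Kato argument proves convergence of $(I+\lambda A_t)^{-1}g$ only for $g$ in the set $D=\{f+\lambda Af:\ f\in C^\infty\cap\B\}$, and you then assert that $D$ is dense in $\B$ ``since $A$ is $m$-dissipative''. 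This does not follow: $m$-dissipativity gives $R(I+\lambda A)=\B$ where the resolvent equation \eqref{generator} is solved in the \emph{viscosity} sense by functions that are merely continuous; it says nothing about the image of the smooth functions under $I+\lambda A$. To conclude that $D$ is dense you would essentially need that the viscosity solution of $u+\lambda Au=g$ is smooth whenever $g$ is smooth (so that $g\in D$), i.e.\ regularity theory for the degenerate quasilinear subelliptic resolvent equation. That is precisely what is unavailable here: the paper's own solutions of \eqref{generator} are obtained by Riemannian approximation \eqref{reg1} with no uniform second-derivative control, and regularity in the sub-Riemannian setting is an open problem. In the uniformly elliptic Euclidean case your argument would close (smooth data give smooth solutions by Schauder theory), which is why a purely semigroup-theoretic route is plausible there; in the present setting it breaks down at exactly the point where the sub-Riemannian difficulty lives. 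Your consistency input $A_tf\to Af$ on smooth $f$ (Proposition \ref{4.2}) and the equi-contraction of the resolvents $(I+\lambda A_t)^{-1}$ are both correct; the density step is the sole, but fatal, failure.

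The paper circumvents this by proving \eqref{hyp} for \emph{every} $f\in\B$ directly, with a compactness-plus-stability argument of Barles--Souganidis type in which smoothness is demanded only of test functions. Setting $u^t:=(I+t^{-1}(I-H(t)))^{-1}f$, the left-invariance of $H(t)$ together with its contraction property yields $\sup_{x\in\G}|u^t(yx)-u^t(x)|\le\sup_{x\in\G}|f(yx)-f(x)|$, so the family $\{u^t\}$ is bounded and equicontinuous and admits uniform limit points. If $u^{t_k}\to u$ and $u-\phi$ has a strict maximum at $x_0$ for smooth $\phi$, the monotonicity of $H(t_k)$ (assertion (1) of Theorem \ref{Hlip}) transfers the relevant inequality from $u^{t_k}$ to $\phi$ at nearby maximum points $x_k$, and the consistency estimate of Proposition \ref{4.2} applied to $\phi$ (never to $u$ itself) shows in the limit that $u$ is a viscosity subsolution of \eqref{intro-generator}; the symmetric argument gives the supersolution property, and uniqueness/comparison then forces $u=(I+A)^{-1}f$, which is \eqref{hyp}. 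If you wish to retain your formulation, you must replace the density claim by this viscosity-stability argument; with the tools available there is no purely functional-analytic way around it.
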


\section{The sub-Riemannian structure as limit of its Riemannian approximation}\label{approximate}

 In this section we recall that the sub-Riemannian structure $(\G,g_0)$ in a Carnot group can be interpreted as a degenerate limit  (in the Gromov-Hausdorff sense)
of Riemannian spaces $(\G,g_\e)$ as $\e\to 0$, \cite{Gromov}.
In \cite{CM}, Citti and Manfredini studied the relation between
the sub-Riemannian Laplace operator and its Riemannian approximation.
In particular they established bounds, uniform in {the approximating parameter} $\epsilon$, for the fundamental solution of the
$g_\e-$Laplace operators. {Here we recall a recent extension of these bounds  to the degenerate parabolic case  by Manfredini and the first two named authors \cite{CCM}.

%We will work in the Carnot groups $\G$ and   $\tilde \G=\G\times \R$, whose Lie algebras we denote by  $\lie$ and by $\tilde \lie$. {On $\tilde \G$ one can define a semi-norm $|\tilde x|_{\tilde\G}=|(x,x_{n+1})|_{\tilde\G}:=|x|_{\G}+|x_{n+1}| $, a distance $\tilde d(\tilde x,\tilde y) = |\tilde y^{-1} \tilde x|$},  and dilations $\delta_\lambda(\tilde x)=(\delta_\lambda(x), \lambda x_{n+1})$ analogous to the same objects defined  $\G$.

We will work in the Carnot groups $\G$ and   $\tilde \G=\G\times \R$, whose Lie algebras we denote by  $\lie$ and by $\tilde \lie$. {On $\tilde \G$ one can define a semi-norm $|\tilde x|_0=|(x,x_{n+1})|_0:=|x|_0+|x_{n+1}| $  where $|x|_0$ is as in \eqref{def-norm}, a distance $\tilde d_0(\tilde x,\tilde y) = |\tilde y^{-1} \tilde x|_0$},  and dilations $\delta_\lambda(\tilde x)=(\delta_\lambda(x), \lambda x_{n+1})$ analogous to the same objects defined  $\G$.

For each $\e>0$ set $$X_{1}^\e= X_1, \cdots, X_{m}^\e = X_m, X_{m+1}^\e=\e X_{m+1}, \cdots  X_{n}^\e =\e X_{n}, \text{ and }X_{n+1}^\e=X_{n+1}$$
and  $\nabla_\e u = (X_{i}^\e)_{i=1, \cdots, n}$. We extend both
$g_0$  and $\tilde g_0$ to  Riemannian metrics $g_\e$  and $\tilde g_\e$ on $\lie$ and $\tilde \lie$ by requesting that the vectors $X_{i}^\e$ are an orthonormal family. The degree of these new vector fields is $d_\e(X_i^\e)=1$ for $\e>0$ and $i=1,...,n+1$.
The Carnot-Caratheodory distances (see \cite{NSW})  associated to these vector fields can be proved to be equivalent to the pseudo-distances
$d_\e(x,y) := |y^{-1} x|_{\e},$ and $\tilde d_\e(\tilde x,\tilde y) := |\tilde y^{-1} \tilde x|_{ \e},$
associated to the
 non-homogeneous norms. For every $v\in \lie$ and $v_{n+1}\in \R$,
 \begin{equation}\label{e-not-e}
 |v|_\e := \sum_{i=1}^m |v_i| + \sum_{i=m+1}^n \mathrm{min} \Big(\frac{|v_i|}{\epsilon}, |v_i|^{1/d(i)}\Big) \text{ and }
 |(v,v_{n+1})|_\e := |v|_\e+|v_{n+1}|.
\end{equation}

It is then clear that for every $\e>0$ one has
$d_1(x,y) \leq d_\e(x,y) \leq d_0(x,y).$ Analogous considerations hold for $\tilde d_0$ and $ \tilde d_\e $.
It can be shown (see \cite{Gromov} and references therein) that $(\tilde \G, \tilde d_\e)\to (\tilde \G, \tilde d_0)$ in the Gromov-Hausdorff topology.

\subsection{Riemannian mean curvature of a graph}

As it is well known, the mean curvature of the graph of a regular function $u_\e$ in the metric $g_\e$ can be expressed in the form:
$$
h_\e=\frac{1}{\sqrt{1+|\nabla_\e u_\e|^2}}\sum_{i,j=1}^n a_{ij}^\e (\nabla_\e u_\e) X_i^\e X_j^\e u_\e\text{ with }a_{ij}^\e(\xi)=\bigg(\delta_{ij}-\frac{\xi_i\xi_j}{1+|\xi|^2} \bigg)
$$
From this expression it immediately follows that $h_\e \rightarrow h_0$ as $\epsilon\to 0$.

Also note that classical results establish existence, uniqueness and regularity for  solutions to the  elliptic equation \begin{equation}\label{reg1}
u_\e-\lambda \sum_{i,j=1}^n a_{ij}^\e (\nabla_\e u_\e) X_i^\e X_j^\e u_\e = f
\end{equation}
for any regular domain of $\lie$, which can be identified with $\R^n$ (see \cite{GT}).

\subsection{Riemannian and sub-Riemannian heat kernels estimates}

For $\e\ge 0$, we  denote by {$\Delta_\e=\sum_{d_\e(i)=1} X_i^\e$} the Laplace operator  in $\tilde G$ defined in terms of the metric $\tilde g_\e$.
The subelliptic heat operator is defined in \eqref{subheat} as a sum of squares of vector fields
$$L = (\p_t- \Delta_0 ) u$$
while its Riemannian counterpart is
$$L_\e = (\p_t- \Delta_\e ) u = (\p_t- \sum_{d(i)=1} X_i^2 - \e^2\sum_{d(i)>1} X_i^2) u$$

Both  operators have  fundamental solutions, which we  call $\Gamma$ (or occasionally $\Gamma_0$) and $\Gamma_\e$ respectively. Note that the fundamental solution is defined on $(\G\times \R\times \R^+)^2$. However
 due to the Lie group invariance, we can fix the fundamental solution with pole in $(0,0)$, and the value in any other point will be obtained by translation invariance, i.e. for $\e\ge 0$
% $$ \Gamma_{\tilde x} ((\tilde y,t),  (\tilde x,\tau)) =\Gamma_{\tilde x} (({\tilde x}^{-1}\tilde y,t-\tau),(0,0)).$$
 \begin{equation}\label{notazione} \Gamma_\e((\tilde y,t),  (\tilde x,\tau)) =\Gamma_\e (({\tilde x}^{-1}\tilde y,t-\tau),(0,0)).\end{equation}

For simplicity we will drop the $(0,0)$ term in the argument of $\Gamma$ and consider the fundamental solution as a function defined on $\R^{n+1}\times \R^+$. Let us recall some well known properties of $\Gamma$,  (see references in \cite{Jerison-Sanchez-Calle})
\begin{prop}\label{prop: proprieta sol fond literature}(Subelliptic Gaussian estimates $\e=0$)
For all $\tilde x \, \in \, \G\times \R$  and $t>0$ we have
\begin{enumerate}
 \item\label{item: rescaling property} (Rescaling property)
 
\begin{align*}
 \Gamma(\tilde x,t) = \dfrac{1}{t^{(Q+1)/2}}\Gamma(\delta_t(\tilde x), 1)
\end{align*}
\item\label{item: asymptotic decay} There  exist constants $C_1, C_2$ depending on $\G$ and $g_0$, such that, for all $\tilde x \, \in \, \G\times \R$ and $t>0$, we have
\begin{multline}
 t^{-(Q+1)/2} \, e^{-C_1 \, \frac{{\tilde d_0}^2 (\tilde x,0)}{t}} \leq \Gamma(\tilde x,t) \leq t^{-(Q+1)/2} \, e^{-C_2 \, \frac{{\tilde d_0}^2 \mathcal{}(\tilde x,0)}{t}}
 \\ \text{ and }
 \\
  |X^I \p_t^\al \Gamma (\tilde x,t)| \le C_1
t^{-(Q+1)/2-\frac{d(I)}{2}-\al} e^{-C_2^{-1} \tilde d_0^2(\tilde x,0) /t}, \label{eq: asymptotic decay}
\end{multline}
where we have let $I=(i_1,...,i_l)\in \{1,...,m\}^l,$  $X^I=X_{i_1}...X_{i_l}$, $d(I)=\sum_{h=1}^l d(i_h)$
and $Q=\sum_{i=1}^r i \dim(V^i)$ is the homogeneous dimension of $\G$.
\item\label{item: normalization} For every $t>0$,
\begin{equation}\label{=1}
 \int_{\G\times \R} \Gamma(\tilde x,t) \, d \tilde x \equiv 1.
\end{equation}
\end{enumerate}
\end{prop}

This result implies in particular that
for every fixed $\e\ge 0$ Gaussian estimates of the heat kernel $\Gamma^\e$ hold
with constants $C_1,C_2$ depending on $\e$.
In this paper we will need to use of uniform estimates of the heat kernels
$\Gamma^\e$ in terms of the limit kernel $\Gamma$   (see \cite{CCM}):

\begin{prop}\label{convergence}
There exist constants $C_1,C_2>0$ depending on $\G, g_0$, such that for every $\e> 0$, $\tilde x\in \tilde \G$ and $t>0$ one has
\begin{equation}\label{heat-1}
C_1^{-1} |B_\e(0,\sqrt{t})|^{-1} \exp(-C_2 \tilde d_\e(\tilde x,0)^2/t) \le \Gamma^\e(\tilde x,t)\le C_1|B_\e(0,\sqrt{t})|^{-1} \exp(-C_2^{-1}  \tilde d_\e(\tilde x,0)^2/t).
\end{equation}
For any $l, \al\in \N$ and $l-$tuple $I=(i_1,...,i_l)\in \{1,...,n\}^l$, there exists $C_3=C_3(l,k,\G,g_0)>0$ such that for every $\e\ge 0$, $\tilde x\in \tilde \G$ and $t>0$ one has
\begin{equation}\  |(X^\e)^I \p_t^\al \Gamma^\e (\tilde x,t)| \le C_3
|B_\e(0,\sqrt{t})|^{-1} t^{-\frac{l}{2}-\al} \exp(-C_2^{-1} \tilde d_\e(\tilde x,0)^2/t),
\end{equation}
where we have let   $(X^\e)^I=X_{i_1}^\e...X_{i_l}^\e$.

%\begin{equation}\  |(X^\e)^I \p_t^\al \Gamma^\e (\tilde x,t)| \le C_3
%|B_\e(0,\sqrt{t})|^{-1} t^{-\frac{d_\e(I)}{2}-\al} \exp(-C_2^{-1} \tilde d_\e(\tilde x,0)^2/t),
%\end{equation}
%where we have let   $(X^\e)^I=X_{i_1}^\e...X_{i_l}^\e$ and $d_\e(I)=\sum_{h=1}^l d_\e(i_h)$.
%
Moreover, as $\e\to 0$ one has
$$X_\e^I \p_t^\al \Gamma^\e\to  X^I \p_t^\al \Gamma$$
uniformly on compact sets and  in a dominated way on all $\G$.
.
\end{prop}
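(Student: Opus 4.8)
The plan is to reduce all three assertions to two geometric facts about the family of metric-measure spaces $(\tilde\G,\tilde d_\e,d\tilde x)$ that hold with constants independent of $\e\in(0,1]$: the volume doubling property for the balls $B_\e(\tilde x,r)$, and a scale-invariant weak $L^2$ Poincar\'e inequality for $\nabla_\e$. Both follow from the Nagel--Stein--Wainger ball-box description of $B_\e$ \cite{NSW} applied to the rescaled frame $X_1^\e,\dots,X_{n+1}^\e$: since these fields are obtained from the fixed stratified frame by dilating the layers of degree $>1$ by $\e$, their commutator structure, and hence the exponents in the ball-box estimate, are controlled uniformly in $\e$. The essential subtlety --- and the reason one cannot merely quote classical Riemannian heat kernel theory --- is that for each fixed $\e>0$ the operator $L_\e$ is uniformly parabolic but with ellipticity degenerating like $\e^2$, so the classical Gaussian bounds blow up as $\e\to 0$; the whole difficulty is to recover them with $\e$-independent constants by working with the Carnot--Carath\'eodory geometry rather than the Euclidean one.

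Granting uniform doubling and Poincar\'e, I would invoke the Grigor'yan and Saloff-Coste characterization: on a doubling space these two properties are equivalent to the scale-invariant parabolic Harnack inequality and to two-sided Gaussian kernel bounds, with all constants depending only on the doubling and Poincar\'e constants. Applied uniformly in $\e$ this gives \eqref{heat-1}. For the derivative bounds I would pass through the lifting method of Rothschild--Stein \cite{RS} in the parabolic form developed in \cite{CM,CCM}: lifting $L_\e$ to a model operator on a free nilpotent group linearizes the commutator geometry and lets one transfer interior subelliptic Schauder estimates with $\e$-uniform constants; combined with the parabolic scaling (exact at $\e=0$ and valid up to controlled errors for $\e>0$) this reduces every estimate to the unit-time case and produces the stated powers of $t$ and $|B_\e(0,\sqrt t)|$.

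For the convergence I would first identify the limit at the level of semigroups. The Dirichlet forms $\mathcal E_\e(u)=\int_{\tilde\G}|\nabla_\e u|^2\,d\tilde x$ decrease monotonically to $\mathcal E_0(u)=\int_{\tilde\G}|\nabla_0 u|^2\,d\tilde x$ as $\e\to 0$, since $\e^2 X_i^2 u\to 0$ for $d(i)>1$ and fixed smooth $u$; hence the forms Mosco-converge, the associated heat semigroups converge strongly, and $\Gamma^\e\to\Gamma$ in $L^2_{loc}$. The uniform bounds of the first two parts, applied to one additional derivative, give equiboundedness and equicontinuity, so Arzel\`a--Ascoli upgrades this to locally uniform convergence of $\Gamma^\e$ together with its $\e$-derivatives; since every subsequential limit must coincide with $\Gamma$ by the $L^2$ identification, the whole family converges. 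Finally the $\e$-independent integrable majorant $C\,|B_0(0,\sqrt t)|^{-1}\exp(-C^{-1}\tilde d_1(\tilde x,0)^2/t)$ furnished by \eqref{heat-1}, using $\tilde d_\e\ge\tilde d_1$ and $|B_\e|\ge|B_0|$, yields dominated convergence on all of $\tilde\G$.

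The main obstacle is precisely this $\e$-uniformity: proving that $(\tilde\G,\tilde d_\e)$ satisfies doubling and the Poincar\'e inequality with constants that do not deteriorate as the Riemannian approximation collapses onto the sub-Riemannian limit. Once this uniform geometry is secured, the kernel bounds and their convergence follow from now-standard machinery, and the genuinely new ingredient is the uniform control across the approximation parameter that the lifting technique of \cite{CM,CCM} is designed to provide.
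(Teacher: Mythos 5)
The paper itself does not prove Proposition \ref{convergence}: it imports it wholesale from the preprint \cite{CCM}, whose method (as the paper describes in Section 2) is a parabolic extension of the lifting procedure of \cite{CM}, reminiscent of Rothschild--Stein \cite{RS} --- the entire family $\{\Delta_\e\}_{\e>0}$ is encoded into a single hypoelliptic operator in a higher-dimensional space, and the classical subelliptic heat kernel estimates for that one fixed operator descend to each $\e$ with constants that are automatically $\e$-independent. Your proposal is sound in outline but assembles the result differently. For the two-sided bounds \eqref{heat-1} you go through uniform doubling and Poincar\'e plus the Grigor'yan--Saloff-Coste--Sturm equivalence with the parabolic Harnack inequality; this is a legitimate and transparent alternative, and uniform doubling is indeed immediate from the explicit norm \eqref{e-not-e}. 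One step is under-justified, though: \cite{NSW} gives the ball-box description and hence doubling, but not a Poincar\'e inequality, and uniformity in $\e$ of the Poincar\'e constant for a collapsing family is not automatic from doubling on a general space. It is true here, and the clean fix is the standard translation argument for left-invariant metrics on unimodular Lie groups (write $u(xg)-u(x)$ as an integral of $|\nabla_\e u|$ along a $d_\e$-geodesic, average over $g$ in a $d_\e$-ball, and use unimodularity plus doubling), which yields a Poincar\'e constant depending only on the doubling constant; alternatively one descends it from a lifted structure, at which point the lifting reappears. For the derivative estimates you invoke exactly the lifting technique of \cite{CM,CCM}, i.e.\ the paper's own source, so there you offer no alternative --- which is fair, since the Harnack machinery alone cannot produce higher-order bounds with the stated subelliptic scaling. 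For the convergence, your Mosco argument (monotone decreasing forms) is a genuinely different and somewhat more structural route than what the uniform estimates already permit, namely: bounds on horizontal derivatives of all orders control non-horizontal directions through commutators, giving local equicontinuity, and Arzel\`a--Ascoli plus uniqueness of the limit heat kernel identifies the limit; your identification does require checking that the decreasing limit form is closable with closure equal to the sub-Riemannian Dirichlet form (true, since $C^\infty_c$ is a core), and your dominated-convergence majorant, using $\tilde d_\e\ge \tilde d_1$ and $|B_\e(0,\sqrt t)|\ge|B_0(0,\sqrt t)|$, is correct. In short: relative to the cited proof, you trade the lifted operator for metric-measure-space machinery in the Gaussian part and for Dirichlet-form convergence in the limit part, at the price of one true-but-unproved geometric input (the uniform Poincar\'e inequality), while the part of the statement that genuinely needs the lifting --- the uniform higher-order derivative bounds --- still rests on \cite{CM,CCM}.
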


The proof of this  result can be found  in \cite{CCM} and is directly inspired to the arguments introduced in \cite{CM},  where the time independent case is studied, and a general procedure  (reminiscent of the lifting technique in \cite{RS}) is introduced for lifting the operator to a higher dimensional space.

\
\section{Nonlinear semigroups and horizontal mean curvature flow of graphs}\label{nonlinear groups}
In this section we will restate the definition of mean curvature operator and viscosity solution of mean curvature flow in the setting of non linear semi-group theory.

\begin{defin}\label{diss} Let $\B$ be a Banach space, and A a $\B-$valued nonlinear operator with domain $D(A)\subset \B$. We say that $-A$ is  $m-$dissipative if
\begin{itemize}\item{$R(I+\lambda A)=\B$ for every $\lambda>0$,}
\item{its resolvent $J_\lambda=(I+\lambda A)^{-1}$ is
a single-valued contraction}
\end{itemize}
\end{defin}

\bigskip

The curvature flow will be defined for regular functions, periodic in the following sense:

\begin{defin}
Let $\G$ be a Carnot group with a set of generators $\{e_1,...,e_n\}$ closed under Lie brackets, and denote by $Q\subset \G$ a fundamental domain, i.e. an open bounded set such that $e_iQ\cap Q=\emptyset$ for all $i=1,...,n$ and $\G=\bigcup_{\alpha}
e_1^{\al_1}...e_n^{\al_n} \bar Q$, where the union is taken among all $n-$tuples of integers $\al=(\al_1,...,\al_n)$. A function $u:\G\to \R$ is periodic with respect to $Q$ and $e_1,...,e_n$ if $u(e_ix)=u(x)$ for all $i=1,...,n$.
\end{defin}

From now on $\B$ will denote the space of continuous periodic functions in a Carnot group $\G$, and  $(\B,|| \cdot ||)$ will be the Banach space obtained by endowing $\B$ with the sup norm $||\cdot||$.
\begin{defin}\label{def-gen}
We say that $u\in \B$ belongs to the domain of $A$ if there exists $f\in \B$ and $\lambda>0$ such that
$u$ is a weak solution (in the sense of  Definition \ref{weak2} below)  of
\begin{equation}\label{generator}
u-\lambda \sum_{i,j=1}^m a_{ij}(\nabla_0 u) X_i X_j u=f, \text{ where }a_{ij}(\nabla_0 u):=\bigg(\delta_{ij}-\frac{ X_i u X_j u}{1+|\nabla_0 u|^2}\bigg)
\end{equation}
in $\G$. In this case we will write $$(I+\lambda A) u =f.$$
\end{defin}

\begin{defin}\label{weak2}
A continuous function $u:\G\to\R$ is a weak sub-solution (resp. a super-solution) of
\begin{equation}\label{intro-generator}
u-\lambda \sum_{i,j=1}^m\bigg(\delta_{ij}-\frac{ X_i u X_j u}{1+|\nabla_0 u|^2}\bigg) X_i X_j u=f,
\end{equation}

in $\G$ if for every $x\in \G$ and smooth $\phi:\G\to\R$ such that $u-\phi$ has a maximum (resp. a minimum) at $x$
one must have $$u(x)-\lambda\sum_{i,j=1}^m\bigg(\delta_{ij}-\frac{X_i \phi (x)X_j \phi(x)}{1+|\nabla_0 \phi(x)|}\bigg) X_i X_j \phi(x)\le (resp. \ \ge ) \ f(x).$$ Solutions are  functions which are  simultaneously  super-solutions and a sub-solutions.
\end{defin}

%%%%%%%%%%%%%%%% m-dissipative

\begin{prop} \label{propA}The operator $-A$ introduced in Definition \ref{def-gen} is $m-$dissipative
\end{prop}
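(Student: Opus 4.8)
The plan is to verify the two conditions in Definition \ref{diss}, namely surjectivity of $I+\lambda A$ and that its resolvent $J_\lambda=(I+\lambda A)^{-1}$ is a single-valued contraction, working throughout with the viscosity formulation of equation \eqref{intro-generator}. I would treat the single-valued and contraction properties together, since both follow from a comparison principle for the stationary equation, and handle surjectivity via Perron's method combined with the Riemannian approximation of Section \ref{approximate}.

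\textbf{Comparison and the contraction estimate.} The core is a comparison principle: if $u$ is a weak sub-solution with datum $f$ and $v$ a weak super-solution with datum $g$ (both in the sense of Definition \ref{weak2}), then $\sup_\G(u-v)\le\|f-g\|$. I would prove this by the standard doubling-of-variables technique adapted to the Carnot group: suppose for contradiction that $\sup(u-v)>\|f-g\|$, introduce the penalized functional $u(x)-v(y)-\tfrac{1}{2\sigma}\tilde d_0(x,y)^2$ (using the homogeneous distance), and at an interior maximizing pair $(x_\sigma,y_\sigma)$ one plays the sub-solution inequality for $u$ against the super-solution inequality for $v$ with a common test function built from the penalization. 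The first-order matrix estimates on the doubling variable, together with the structure of the coefficients $a_{ij}(\nabla_0 u)=\delta_{ij}-X_iu\,X_ju/(1+|\nabla_0 u|^2)$, yield the inequality $u(x_\sigma)-v(x_\sigma)\le f(x_\sigma)-g(x_\sigma)+o(1)$, forcing $\sup(u-v)\le\|f-g\|$ and contradicting the assumption. Exchanging the roles of $u,v,f,g$ and combining gives $\|J_\lambda f-J_\lambda g\|\le\|f-g\|$; the special case $f=g$ forces $u=v$, so $J_\lambda$ is single-valued (a genuine contraction, in fact nonexpansive).

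\textbf{Surjectivity.} For $R(I+\lambda A)=\B$ I would show that for every $f\in\B$ and $\lambda>0$ there is a weak solution $u\in\B$ of \eqref{intro-generator}. Here the Riemannian approximation is the natural tool: by the classical existence and regularity results quoted in \eqref{reg1}, the regularized equation $u_\e-\lambda\sum_{i,j=1}^n a_{ij}^\e(\nabla_\e u_\e)X_i^\e X_j^\e u_\e=f$ has a smooth periodic solution $u_\e$ for each $\e>0$. The comparison principle (applied at the level $\e>0$, where it follows from the strict ellipticity) gives the uniform bound $\|u_\e\|\le\|f\|$, and one derives uniform equicontinuity estimates so that, along a subsequence, $u_\e\to u$ uniformly as $\e\to0$. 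Since $h_\e\to h_0$ and the coefficients converge, a standard stability argument for viscosity solutions (passing the sub/super-solution inequalities to the limit through the uniformly convergent test functions) shows the limit $u$ is a viscosity solution of \eqref{intro-generator}, hence lies in $D(A)$ with $(I+\lambda A)u=f$.

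\textbf{Main obstacle.} The delicate point is the comparison principle at the degenerate sub-Riemannian level ($\e=0$), because the operator is only degenerate parabolic/elliptic and the coefficients $a_{ij}(\nabla_0 u)$ are singular where $\nabla_0 u$ blows up; in the doubling argument one must control the second-order terms without any uniform ellipticity, exploiting that the test function's Hessian contributions from the penalization enter with the correct sign in the $m\times m$ horizontal block. One expects to invoke the comparison machinery already developed for this equation in \cite{CC}, which handles precisely this structure; the role of the present argument is then to package that comparison, together with the approximation scheme, into the two bullet points of Definition \ref{diss}. The uniform equicontinuity needed for the compactness in the surjectivity step is the other technical burden, and I would obtain it from the uniform kernel estimates of Proposition \ref{convergence} or from barrier constructions respecting the group dilations.
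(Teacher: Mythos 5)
Your overall architecture coincides with the paper's: contraction and single-valuedness of $J_\lambda$ via a comparison principle, and surjectivity via the Riemannian regularization \eqref{reg1} plus compactness and stability. The genuine gap is in the comparison step. The doubling-of-variables argument you sketch, with penalization $\tfrac{1}{2\sigma}\tilde d_0(x,y)^2$, does not go through as stated in a Carnot group: first, $\tilde d_0^2$ is not smooth at the diagonal (one would at least need the even power $d_0^{2r!}$, which is polynomial in exponential coordinates); more seriously, if $\varphi(x,y)=\psi(y^{-1}x)$, then differentiating the penalization along the left-invariant fields in the $x$-variable produces (up to sign) \emph{right}-invariant derivatives in the $y$-variable, so the gradients and horizontal Hessians of the two test functions at the maximizing pair do not cancel as in the Euclidean case, and no off-the-shelf theorem on sums is available in this setting to repair the mismatch. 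This is precisely why the paper does not double variables with the group distance: it regularizes $u$ and $v$ by Wang's sup/inf-convolutions \cite{wang:convex} (taken with respect to $d_0^{2r!}$), which remain sub/super-solutions and are semiconvex/semiconcave, and then applies the Euclidean Jensen lemma \cite{MR920674}, \cite[Theorem 3.2]{usersguide} to $u^\mu(x)-v_\mu(y)-\tfrac{\al}{2}|x-y|_E^2$ with \emph{Euclidean} penalization; at the resulting points of twice differentiability, the Euclidean gradient and Hessian bounds convert into the horizontal statements $|X_iu^\mu-X_iv_\mu|\to 0$ and $X_iX_h u^\mu-X_iX_h v_\mu\le 0$, which is what allows the two PDE inequalities to be subtracted. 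Your appeal to the machinery of \cite{CC} points in the right direction, but the mechanism you actually describe (sign structure of the penalization's horizontal Hessian block) is not the one that makes the argument work, so as written the key step is not established.

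Two smaller points on surjectivity. Classical solvability of \eqref{reg1} from \cite{GT} requires regular data, so you cannot run the approximation directly for arbitrary $f\in\B$: the paper first takes $f\in\B\cap C^{\infty}(\G)$, passes to the limit $u_{\e_k}\to u$ using the stability argument of \cite[Theorem 5.2]{CC}, and then uses the contraction to conclude that the range of $I+\lambda A$ is \emph{closed}, hence equal to $\B$ by density; this last step is missing from your argument. Also, the equicontinuity you propose to extract from Proposition \ref{convergence} (parabolic kernel bounds, which concern the heat flow rather than the elliptic resolvent equation) or from barriers is obtained much more cheaply in the paper: since the $X_i$ are left-invariant, $u_\e(y\,\cdot)$ solves the equation with datum $f(y\,\cdot)$, so the contraction estimate at each level $\e>0$ gives $\sup_{x\in\G}|u_\e(yx)-u_\e(x)|\le \sup_{x\in\G}|f(yx)-f(x)|$, i.e. the modulus of continuity of $f$ transfers to the whole family $\{u_\e\}_\e$ uniformly in $\e$.
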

\proof

We first prove that $ ||u-v|| \le ||J_\lambda u-J_\lambda v||.$
For $\lambda>0$ let $f= J_\lambda u= u+\lambda A u$ and $w= J_\lambda v =v+\lambda Av$,   in the weak sense of Definition \ref{def-gen}. For $\mu>0$ set $u^\mu$ and $v_\mu$ the   sup-convolutions defined in \cite{wang:convex}: $$u^\mu(x)=\sup_{y\in \G} \Big( u(y)-\frac{1}{\mu} d^{2r!}_0(x,y) \Big) \quad v_\mu:=\inf_{y\in \G} \Big( v(y)+\frac{1}{\mu} d_0(x,y)^{2r!} \Big)$$  In view of Wang's results in \cite{wang:convex}
one has that $u^\mu$ is a super-solution of \eqref{generator} and is  semiconvex. For each $\alpha, \mu>0$ we can then apply Jensen's lemma (see \cite{MR920674} or \cite[Theorem 3.2]{usersguide})
 to the semi-convex function in $\G \times \G$,
$$\phi_{\mu, \alpha}(x,y):=u^\mu (x)-v_{\mu}(y)-\frac{\al}{2}|x-y|_E^2,$$ where $|\cdot|_E$ denotes the Euclidean norm, and obtain a sequence of points $p_j^{\mu,\alpha}=(x_j^{\mu,\alpha}, y_j^{\mu,\alpha})\to p^{\mu,\alpha}=(x^{\mu,\alpha},y^{\mu,\alpha})$ with $p^{\mu,\al}$ a maximum point for the function $\phi_{\mu,\al}$ which in turn is twice differentiable at $p_j^{\mu,\al}$ and satisfies
$$|\nabla_E \phi_{\mu,\al}(p_j^{\mu,\al})|=o(1)\text{ and }-\gamma I \le D^2_E \phi^{\mu,\al}(p_j^{\mu, \al} )\le o(1)I \text{ as }j\to \infty,$$
where $\nabla_E$ and $D^2_E $ denote respectively the Euclidean gradient and Hessian,
for some choice of $\gamma>0$. This implies that for every $i,h=1,...,m$ $$\lim_{j\to \infty} |X_iu^\mu(x_j^{\mu,\al})-X_iv_\mu (y_j^{\mu,\al})|=0\text{ and }\lim_{j\to \infty} X_i X_h u^\mu (x_j^{\mu,\al})-X_i X_h v_\mu (y_j^{\mu,\al})\le 0.$$ Since by definition $u$ and $v$ are solution of the {PDE}'s $f=   u+\lambda A u$ and $w=v+\lambda Av$ respectively, so that $u^\mu$ and $v_\mu$ satisfy
$$u^\mu-f\le\lambda \sum_{i,k=1}^m a_{ik}(\nabla_0 u^\mu)X_iX_k u^\mu \text{ at } x_j^{\mu,\al} \text{ and }v_\mu-w\ge\lambda  \sum_{i,k=1}^m a_{ik}(\nabla_0 v_\mu)X_iX_k v_\mu  \text{ at } y_j^{\mu,\al}.$$ Subtracting the first inequality from the second and using the following fact proved in \cite[Lemma 3.2]{usersguide}
$$\lim_{\al\to\infty, \mu\to 0} \al |x^{\mu,\al}-y^{\mu,\al}|=0\text{ and }\lim_{\al\to\infty, \mu\to 0} \sup_{\G\times \G} \phi_{\mu,\al}(x,y)=\sup_{x\in \G} (u(x)-v(x)),$$ one concludes that $ ||u-v|| \le ||f-w||,$ which is the first part in the definition of $m-$dissipative operator.

We now establish that $R(I+\lambda A)=\B$. Let $f\in \B\cap C^{\infty}(\G)$ and for each $\e>0$ consider weak solutions $u_\e$ of the approximating elliptic PDE \eqref{reg1}.
A simple variant of the argument above shows that for every $\e>0$ one has $||u_\e||\le ||f||$ and, in view of the fact that the vector fields $X_i's$ are left-invariant,  that $$\forall y\in \G \;\sup_{x\in \G}{|u_\e(yx)-u(y)|} \le \sup_{x\in \G}|f(yx)-f(y)|.$$ The latter proves that $\{u_\e\}_\e$ is equi-continuous and equi-bounded.   For a subsequence one has $u_{\e_k}\to u\in \B$ as $\e_k \to \infty$
and the argument in \cite[Theorem 5.2]{CC} shows that $u$ is a
 weak solution of \eqref{generator}. In view of the comparison principle we deduce that the range of  $I+\lambda A$ is closed, hence it is the whole of $\B$.

\endproof

The proof of the following proposition is  exactly as the proof of the first part of \cite[Theorem 2.4]{EVANS}.

\begin{prop}\label{propB} The domain of $A$ is dense in $\B$.
\end{prop}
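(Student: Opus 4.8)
The plan is to exploit the general fact that, for the $m$-dissipative operator $-A$, the resolvent value $J_\lambda f = (I+\lambda A)^{-1} f$ always lies in $D(A)$, so that density of $D(A)$ in $\B$ follows once I show $J_\lambda f \to f$ as $\lambda\to 0^+$ for every $f$ in a dense subset of $\B$. The natural such subset is the space of smooth periodic functions, which is dense in $\B$ in the sup norm because the fundamental domain $Q$ is relatively compact (standard mollification, respecting periodicity, or Stone--Weierstrass on the compact quotient). Thus the entire argument reduces to one quantitative estimate on $\|J_\lambda f - f\|$ for smooth $f$.

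First I would establish this estimate by a barrier/comparison argument. Fix a smooth periodic $f$ and write $F[f] := \sum_{i,j=1}^m a_{ij}(\nabla_0 f)\,X_i X_j f$, which is again continuous and periodic, so $\|F[f]\| := \sup_{\G}|F[f]| < \infty$ by compactness of the fundamental domain. The key observation is that adding a constant leaves both $\nabla_0 f$ and the horizontal Hessian $X_iX_j f$ unchanged, so the constant-shifted functions
$$w^{\pm} := f \pm \lambda \|F[f]\|$$
satisfy $w^{+} - \lambda F[w^{+}] = f + \lambda\big(\|F[f]\| - F[f]\big) \ge f$ and $w^{-} - \lambda F[w^{-}] = f - \lambda\big(\|F[f]\| + F[f]\big) \le f$. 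Hence $w^{+}$ and $w^{-}$ are, respectively, a smooth (classical) super-solution and sub-solution of the stationary equation \eqref{generator} with datum $f$, in the sense of Definition \ref{weak2}.

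Next I would invoke the comparison principle for \eqref{generator} (the same principle already used at the end of the proof of Proposition \ref{propA}) to sandwich the viscosity solution $u = J_\lambda f$ between these classical barriers, obtaining $w^{-} \le J_\lambda f \le w^{+}$ pointwise, and therefore $\|J_\lambda f - f\| \le \lambda \|F[f]\|$. This immediately yields $J_\lambda f \to f$ as $\lambda\to 0^+$ for smooth periodic $f$. To conclude, given an arbitrary $f \in \B$ and $\eta>0$, I would pick a smooth periodic $g$ with $\|f-g\| < \eta$ and then $\lambda$ small enough that $\|J_\lambda g - g\| < \eta$; since $J_\lambda g \in D(A)$ and $\|J_\lambda g - f\| < 2\eta$, the density of $D(A)$ in $\B$ follows.

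I expect the only delicate point to be the legitimacy of the comparison step, namely comparing the merely continuous (viscosity) solution $J_\lambda f$ against the smooth classical barriers $w^{\pm}$ for the degenerate elliptic equation \eqref{generator}. This is precisely the comparison principle already invoked for Proposition \ref{propA}, so no genuinely new difficulty arises; the only additional care is to record the finiteness of $\|F[f]\|$, which relies on the periodicity of $f$ together with the compactness of the fundamental domain.
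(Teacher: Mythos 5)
Your proposal is correct and is essentially the paper's own proof: the paper simply cites the first part of \cite[Theorem 2.4]{EVANS}, and Evans' argument there is exactly your barrier construction $w^{\pm}=f\pm\lambda\|F[f]\|$ for smooth periodic $f$, comparison to get $\|J_\lambda f-f\|\le\lambda\|F[f]\|$, and then density of smooth functions plus $J_\lambda f\in D(A)$. The comparison step you flag as delicate is indeed the same principle already established in the proof of Proposition \ref{propA}, so no new difficulty arises.
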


Propositions \ref{propA} and \ref{propB} allow us to invoke the previously recalled  Generation Theorem of Crandall and Liggett (see (\ref{semigroup}) above), and ensure that $-A$ generates a non linear semi-group $M(t)(f)$.

\bigskip

{\bf Proof of Proposition \ref{generatore}}

Let $\phi:\G\times (0,\infty)\to \R$ be a smooth function such that $u-\phi$ has a maximum point at $(x_0,t_0)\in \G\times (0,\infty)$. We can always assume that the maximum is strict adding a  power of the gauge distance as in \cite{CC}. We set  $$
u^s(x,t)=\Big((I+\frac{1}{s}A)^{-k} f\Big)(x)\text{ for all integers }s, k\text{ and }t\in \Big[\frac{k}{s}, \frac{k+1}{s}\Big) $$ and note that for a fixed $t>0$, as $s\to \infty$ one has $k\to \infty$ and $k/s\to t$. By Crandall and Liggett's convergence result one has that $u^s\to u$ uniformly globally. As a consequence one can find a  sequence $(x_s,t_s)\in \G\times (0,\infty)$ such that: \begin{itemize}\item[(i)]{
$(x_s,t_s)\to (x_0,t_0)$;} \item[(ii)]{ For fixed  $t_s>0$ the function $(u^s-\phi)(\cdot, t_s)$ has a maximum at $x_s$;}
\item[(iii)]{ $ (u^s-\phi(x_s, t_s)+\frac{1}{s^2}\ge (u^s-\phi) (x,t)$ for all $(x,t)\in \G\times (0,\infty)$.}\end{itemize}
Following \cite[Theorem 2.5]{EVANS} for each $s$ we let $k_s$ be such that $t_s\in [\frac{k_s}{s},\frac{k_{s+1}}{s})$ and in particular $t_s-\frac{1}{s}\in [\frac{k_{s-1}}{s}, \frac{k_s}{s})$. This choice yields
$$u^s(x,t_s)= (I+\frac{1}{s}A)^{-1} u^s(\cdot,t_s-\frac{1}{s})(x),$$ in the weak sense
of Definition \ref{def-gen}.
By virtue of  (ii) and (iii) above one has
$$\frac{\phi(x_s,t_s)-\phi(x_s,t_s-\frac{1}{s})}{\frac{1}{s}} - \sum_{i,j=1}^m\bigg(\delta_{ij}-\frac{X_i \phi X_j \phi}{1+|\nabla_0 \phi|^2}\bigg) X_i X_i \phi|_{(x_s,t_s)} \le \frac{1}{s}.$$
As $s\to \infty$, the latter completes the proof that $u$ is a sub-solution. In a similar fashion one can prove that $u$ is also a super-solution and conclude the proof.
\endproof

\section{A discrete  time-step operator based on the heat flow }

\subsection{Flow of subgraphs of functions belonging to $\B$}

Let $(\B,||\cdot||)$ be the Banach space introduced in Definition \ref{def-gen}.
 If $f\in \B$, we call subgraph of $\B$ the  subset of $\G \times \R$ defined as
  \begin{equation}\label{subgraph}\sg=\{\tilde x=(x, x_{n+1}): x_{n+1}\leq f(x) \}\subset\G \times \R.\end{equation}
  We give here the definition of  heat flow of {a} subgraph  $\sg$ defined in \eqref{subgraph}, as an extension of the analogous notion given by Evans in the Euclidean case \cite{EVANS}.
For all $t > 0 $ denote $w(x, t)$  the solution of the Cauchy problem
$$
\left \{
\begin{array}{ccc}
\lap{w} & = & \partial_t w \\
w(., \, 0) & = & \chi_\sg
\end{array}
\right.
$$
and define
\begin{align}
\mathcal{H}(t)(\sg) = \{\tilde x \, \in \, \G \times \R : w(\tilde x, t) \geq \dfrac{1}{2}  \} \label{eq: heat flow}
\end{align}

In order to prove that this functional induces a flow on $\B$, we need to show that if the subgraph of a periodic, continuous function $f:\G\to \R$ is evolved
through the flow $\mathcal{H}(t)$ then the new set is also the subgraph of
a periodic continuous function.

\begin{lemma}\label{defflow} Let $f\in \B$. For any $t>0$ there exists a function  $g\in \B$ such that
$$\mathcal H(t) \{ x_{n+1}< f(x)\}=\{x_{n+1}< g(x)\}.$$
\end{lemma}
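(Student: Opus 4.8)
The plan is to exploit the direct–product structure of $\tilde\G=\G\times\R$ to reduce the evolved function $w$ to a one–dimensional Gaussian average in the vertical variable, from which the existence, continuity and periodicity of $g$ all follow transparently. I would start from the representation $w(\tilde x,t)=\int_{\G\times\R}\Gamma(\tilde y^{-1}\tilde x,t)\chi_\sg(\tilde y)\,d\tilde y$. Since $\tilde\G$ is a direct product and its degree–one fields split as $X_1,\dots,X_m$ (acting only on $\G$) together with $X_{n+1}=\partial_{n+1}$, the generator factors as $\Delta_0=\Delta_\G+\partial_{n+1}^2$ with $\Delta_\G=\sum_{i=1}^m X_i^2$, and the two commuting summands tensorize the heat semigroup. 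Hence $\Gamma((x,x_{n+1}),t)=\Gamma_\G(x,t)\,G_t(x_{n+1})$, where $\Gamma_\G>0$ is the sub–Laplacian heat kernel on $\G$ (of unit mass, by Proposition \ref{prop: proprieta sol fond literature}) and $G_t(\tau)=(4\pi t)^{-1/2}e^{-\tau^2/4t}$. Writing $\tilde y=(y,y_{n+1})$, using $\chi_\sg(\tilde y)=1\Leftrightarrow y_{n+1}\le f(y)$ and carrying out the inner vertical integral, I obtain
\[ w(x,x_{n+1},t)=\int_\G \Gamma_\G(y^{-1}x,t)\,\Phi\!\Big(\tfrac{f(y)-x_{n+1}}{\sqrt{2t}}\Big)\,dy, \]
where $\Phi$ is the standard Gaussian distribution function.

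From this formula everything can be read off. Because $\Phi$ is strictly increasing and $\Gamma_\G>0$ integrates to $1$, the map $x_{n+1}\mapsto w(x,x_{n+1},t)$ is continuous and strictly decreasing, with limits $1$ and $0$ as $x_{n+1}\to-\infty$ and $+\infty$ (dominated convergence). The two–sided estimate $\Phi\big(\tfrac{-\|f\|_\infty-x_{n+1}}{\sqrt{2t}}\big)\le w\le \Phi\big(\tfrac{\|f\|_\infty-x_{n+1}}{\sqrt{2t}}\big)$, which comes from $-\|f\|_\infty\le f\le\|f\|_\infty$, shows that the unique level $g(x)$ determined by $w(x,g(x),t)=\tfrac12$ exists and obeys $|g(x)|\le\|f\|_\infty$. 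For continuity of $g$ I would argue by compactness rather than by an implicit function theorem (since $f$ is only continuous): if $x_k\to x$, the values $g(x_k)$ stay in $[-\|f\|_\infty,\|f\|_\infty]$ and any limit point $\ell$ satisfies $w(x,\ell,t)=\tfrac12$ by joint continuity of $w$, hence $\ell=g(x)$ by uniqueness; thus $g(x_k)\to g(x)$. Periodicity follows from left–invariance: substituting $y\mapsto e_i y$ in the integral and using $f(e_i y)=f(y)$ gives $w(e_i x,\cdot,t)=w(x,\cdot,t)$, whence $g(e_i x)=g(x)$ and $g\in\B$.

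Finally, strict monotonicity gives $\{w\ge\tfrac12\}=\{(x,x_{n+1}):x_{n+1}\le g(x)\}$, so by \eqref{eq: heat flow} the set $\mathcal H(t)(\sg)$ is exactly the subgraph of the function $g\in\B$ just constructed (the open and closed versions appearing in the statement differ only on the negligible graph $\{x_{n+1}=g(x)\}$, which does not affect the identification of $g$). I expect the only genuinely delicate points to be the product factorization of $\Gamma$—which is the crux that turns the problem into a scalar monotone equation—and the combination of $\sg$ being unbounded in the vertical direction with $f$ being merely continuous; both are handled by the explicit $\Phi$–representation and by deducing the continuity of $g$ from monotonicity and uniqueness rather than from differentiability.
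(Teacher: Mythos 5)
Your proof is correct, but it takes a genuinely different route from the paper's. The paper works directly with the full heat kernel on $\G\times\R$: it sets $F_f(y,y_{n+1})=\int_\G\int_{-\infty}^{f(x)}\Gamma(x^{-1}y,\,y_{n+1}-x_{n+1},\,t)\,dx_{n+1}\,dx$, differentiates in the vertical variable to obtain $\partial_{y_{n+1}}F_f=-\int_\G\Gamma(x^{-1}y,\,y_{n+1}-f(x),\,t)\,dx<0$ (this uses only positivity of $\Gamma$ and its invariance under vertical translations), and then applies the implicit function theorem (legitimate, since $w(\cdot,t)$ is smooth for $t>0$ by hypoellipticity) together with strict monotonicity to define $g$ globally. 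You instead exploit the direct-product structure of $\tilde\G=\G\times\R$ to tensorize the kernel, $\Gamma((x,x_{n+1}),t)=\Gamma_\G(x,t)\,G_t(x_{n+1})$, with $\Gamma_\G$ the heat kernel of the factor $\G$ and $G_t$ the one-dimensional Gaussian, reducing $w$ to an average of Gaussian distribution functions. Your factorization buys several things the paper's proof treats only implicitly: the explicit two-sided bounds yielding $|g|\le\|f\|_\infty$ and the limits $w\to 1$, $w\to 0$ as $x_{n+1}\to\mp\infty$ --- this is precisely the point the paper passes over most quickly, since strict monotonicity alone does not guarantee that the level $1/2$ is attained on every vertical line; the verification that $g$ is periodic, which is needed for $g\in\B$ but not spelled out in the paper; and a continuity argument (compactness plus uniqueness of the level) that avoids differentiability entirely. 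What the paper's route buys in exchange is brevity and slightly greater robustness: it needs only positivity and vertical translation invariance of the kernel, not the full product structure, so it would survive modifications of the operator that destroy tensorization. One caveat common to both treatments: the flow \eqref{eq: heat flow} is defined via the closed superlevel set $\{w\ge 1/2\}$, while the lemma is stated with strict inequalities, so your construction literally produces $\{x_{n+1}\le g(x)\}$; your observation that the open and closed subgraphs differ only on the graph of $g$ is the right way to reconcile this, and the discrepancy is present in the paper itself rather than being a defect of your argument.
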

\proof
Let $(y,y_{n+1})\in \p \{\mathcal H(t) \{ x_{n+1}< f(x)\}$ so that
$$\frac{1}{2}=\int_{ \{ x_{n+1}< f(x)\} } \Gamma({\tilde x}^{-1} \tilde y,t)d \tilde x
=\int_{\G}  \int_{-\infty}^{f(x)}\Gamma ({x}^{-1}y, y_{n+1}-x_{n+1},t) dx_{n+1} dx.$$
If we denote
\begin{equation}\label{Ff}F_f(y,y_{n+1}):= \int_{\G} \int_{-\infty}^{f(x)}\Gamma({x}^{-1}y, y_{n+1}-x_{n+1},t)  dx dx_{n+1}.\end{equation}
differentiating along  the variable $y_{n+1}$ yields
$$\p_{y_{n+1} }F_f(y,y_{n+1})= \int_{\G} \Gamma ({x}^{-1}y, y_{n+1}- f(x), t) dx<0.$$

The implicit function theorem  then implies that in a small neighborhood of every point $z\in \G$, there exists a continuous function
$g:B(z,r)\subset \G\to \R$ such that $F_f(y,g(y))=1/2$ for all $y\in B(z,r)$. On the other hand, by the strict monotonicity of the function $F_f$ as a function of its last variable, we immediately deduce that the function $g$ is defined on the whole group $\G$ with real values.

\endproof

This lemma allows to give the following definition of flow of functions in $\B$:
\begin{defin}\label{definflow}
If $f\in \B$ we define the operator $$H(t):\B\rightarrow \B, \quad H(t)(f)=g,$$
where $g$ is the unique function satisfying
$$\mathcal H(t) \{ x_{n+1}< f(x)\}=\{x_{n+1}< g(x)\}.$$
\end{defin}

Thanks to the explicit expression of $H$ we can establish comparison principle properties of the flow $H$.
\begin{teor} \label{Hlip}  For each $t \geq 0$ the flow  $H(t): \B \to \B$ just defined has the following properties
 \begin{enumerate}
\item If $f  \leq g$ then $H(t)f  \leq H(t)g$
\item If $C$ is a real constant, then $H(t)(f+C) = H(t)f +C$
\item $H(t)$ is a contraction on $\B$
$$
||H(t)f - H(t)g || \leq || f-g||
$$
 \end{enumerate}
\proof
Let us show assertion (1). If $f,g\in \B$, and $f\geq g$ we can use Lemma \ref{defflow}, and represent the function  $F_g$ defined in (\ref{Ff}) as
$$ F_g(y,y_{n+1})= F_f(y,y_{n+1})  +  \int_\G \int_{f(x)}^{g(x)}\Gamma({x}^{-1}y, y_{n+1}-x_{n+1},t)  dx dx_{n+1}.$$
Since from the definition of $F_f$ and $H(t)f$ we have
$$F_f(x,H(t)f(x))=F_g(x,H(t)g(x)) = 1/2,$$
we immediately get
$$F_f(x,H(t)f(x))= F_g(x,H(t)g(x)) \geq F_f(x,H(t)g(x)).$$
By the strict monotonicity of $F_f$ in its last variable we deduce:
$$H(t)f\leq H(t)g.$$

In order to prove assertion (2) we note that
$$F_{f}(y,y_{n+1}) = \int_\G \int_{-\infty}^{f(x)}\Gamma({x}^{-1}y, y_{n+1}-x_{n+1},t)  dx dx_{n+1}=$$
with the change of variable $z_{n+1}=x_{n+1}+C$,
$$ = \int_\G \int_{-\infty}^{f(x)+C}\Gamma({x}^{-1}y, y_{n+1}+C-z_{n+1},t)  dx dx_{n+1}=F_{f+C}(y,y_{n+1}+C)$$
For $y_{n+1}=H(t)f(y)$ we get
$$F_{f+C}(y,H(t)f(y)+C)=1/2,$$
which implies by the uniqueness that
$H(t)(f)(y)+ C = H(t)(f+C)(y)$.

Finally we prove assertion (3). Let us choose $x$ such that
$$H(t)f(x) - H(t)g(x) > ||H(t)f - H(t)g|| - \epsilon$$
and call
$$H(t)f(x)=\mu\quad H(t)g(x) =\lambda.$$
By assertion (2) we have
$$H(t)f(x) - H(t)({g- \lambda + \mu - \epsilon})(x)>0$$
By assertion (1) this implies that there exists a point $y$ such that
$$f(y) - (g(y) -  \lambda + \mu - \epsilon)>0,$$
which by definition of   $\lambda$ and $\mu$ implies
$$ ||H(t)f - H(t)g|| < H(t)f(x) - H(t)g(x) + \epsilon < f(y) - g(y) + 2\epsilon \leq ||f - g|| + 2\epsilon $$
Assertion (3) follows.
\endproof
\end{teor}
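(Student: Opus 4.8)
The plan is to base all three assertions on the single auxiliary function $F_f$ introduced in the proof of Lemma \ref{defflow}, exploiting its two structural features. First, $F_f(y,y_{n+1})$ is exactly the heat evolution $w(\tilde y,t)$ of the characteristic function of the subgraph of $f$, evaluated at $\tilde y=(y,y_{n+1})$. Second, it is \emph{strictly decreasing} in the vertical variable, since $\partial_{y_{n+1}}F_f(y,y_{n+1})=\int_\G \Gamma(x^{-1}y,\,y_{n+1}-f(x),\,t)\,dx<0$ by positivity of the heat kernel. The operator is characterized by the level-set identity $F_f(y,H(t)f(y))=1/2$, and strict monotonicity makes $H(t)f$ uniquely determined; every assertion will follow by manipulating this identity.

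For the monotonicity assertion (1), I would first record that $f\le g$ implies $F_f\le F_g$ pointwise, because the difference equals $\int_\G\int_{f(x)}^{g(x)}\Gamma(x^{-1}y,\,y_{n+1}-x_{n+1},\,t)\,dx_{n+1}\,dx\ge 0$, again by positivity of $\Gamma$. Evaluating at the level set then gives $F_f(y,H(t)g(y))\le F_g(y,H(t)g(y))=1/2=F_f(y,H(t)f(y))$; since $F_f$ is strictly decreasing in its last argument, this forces $H(t)g(y)\ge H(t)f(y)$, which is the claim.

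For the vertical-translation assertion (2), the idea is that shifting $f$ by a constant $C$ is intertwined with the heat flow by the change of variables $z_{n+1}=x_{n+1}+C$ in the definition of $F_f$, yielding the exact identity $F_f(y,y_{n+1})=F_{f+C}(y,y_{n+1}+C)$. Setting $y_{n+1}=H(t)f(y)$ and using $F_f(y,H(t)f(y))=1/2$ shows that $H(t)f(y)+C$ realizes the level $1/2$ for $F_{f+C}$, so uniqueness of the level set gives $H(t)(f+C)=H(t)f+C$.

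The contraction assertion (3) is the one I expect to be the main point, and I would deduce it purely formally from (1) and (2) by a near-maximizer argument. Fixing $\epsilon>0$, I would choose $x$ with $H(t)f(x)-H(t)g(x)>||H(t)f-H(t)g||-\epsilon$ and abbreviate $\mu=H(t)f(x)$, $\lambda=H(t)g(x)$. Applying (2) to the constant shift $\mu-\lambda-\epsilon$ and then (1) produces a point $y$ at which $f(y)-(g(y)-\lambda+\mu-\epsilon)>0$, i.e.\ $f(y)-g(y)>\mu-\lambda-\epsilon$. Chaining these inequalities bounds $||H(t)f-H(t)g||$ by $f(y)-g(y)+2\epsilon\le ||f-g||+2\epsilon$, and letting $\epsilon\to 0$ finishes the proof. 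The only real subtlety here is bookkeeping the constant shift so that the comparison furnished by (1) lands precisely on the quantity $||f-g||$; the analytic content is entirely contained in the strict monotonicity and positivity already used for (1) and (2).
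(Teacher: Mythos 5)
Your proposal is correct and follows essentially the same route as the paper's own proof: assertion (1) from the positivity of $\Gamma$ and the strict monotonicity of $F_f$ in the vertical variable, assertion (2) from the change of variables $z_{n+1}=x_{n+1}+C$ together with uniqueness of the level set, and assertion (3) by the same near-maximizer argument combining (1) and (2) with the shift $\mu-\lambda-\epsilon$. No gaps; the bookkeeping in (3) matches the paper's chain of inequalities exactly.
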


%%%%%%%%%%%%%%%%%%%%%%%%%%%%%%%%%%%%
\section{Sub-Riemannian diffusion of smooth non-characteristic graphs}\label{core}

%%%%%%%%%%%%%%%%%%%%

In this section we prove the geometrical core of the paper, which is the equality of the normal {speed} of a graph evolving by mean curvature and the normal speed of the same graph evolving under the heat flow defined in \eqref{eq: heat flow}.
According to the notation introduced in section 2, for $f\in \B$ we will denote by $d\sigma_1$ the surface measure element on $\p \sg=\{(x,f(x))\in\G\times\R|x\in \G\}$
induced by the metric $g_\e$, with $\e=1$ and by  $d\sigma_0$ the horizontal perimeter measure $d\sigma_0=\frac{\sqrt{1+|\nabla_0 u|^2}}{\sqrt{1+|\nabla_1 u|^2}} d\sigma_1$. The outer horizontal unit normal $\nu_0$ to $\sg$ is expressed in the frame $X_1,...,X_{n+1}$ as $\nu_0=(-\nabla_0 f,1)/\sqrt{1+|\nabla_0 f|^2}$.
\begin{lemma}\label{intbyparts}
If $f:\G\to\R$ is a smooth function, with {sub-}graph $\sg$, then  for
 every  $i=1,...,m$ one has
$$\int_\sg  v(x) X_i u (x)dx= -\int_\sg u(x) X_i v(x) dx + \int_{\p \sg} u(x) v(x) \nu^0_i d\sigma_0(x),$$
for any smooth functions $u,v$ for which the integrals are defined.
\end{lemma}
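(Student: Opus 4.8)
The plan is to realize this identity as the Riemannian divergence theorem for the metric $g_1$, exploiting the fact that the left-invariant fields $X_1,\dots,X_{n+1}$ form a $g_1$-orthonormal frame whose elements are divergence-free.

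First I would record the crucial algebraic fact that each $X_i$ is divergence-free with respect to Lebesgue (equivalently Haar) measure on $\tilde\G=\G\times\R$. In exponential coordinates a left-invariant field on a Carnot group has the triangular form $X_i=\partial_{x_i}+\sum_{j>i}p_{ij}(x)\partial_{x_j}$, where each polynomial $p_{ij}$ depends only on the coordinates preceding $x_j$; hence $\partial_{x_j}p_{ij}=0$ and $\operatorname{div}X_i=0$. Equivalently, nilpotent groups are unimodular, so left-invariant fields preserve Haar measure. Since $g_1$ is left-invariant and $\{X_i\}$ is $g_1$-orthonormal, the Riemannian volume $dV_{g_1}$ coincides, after normalization, with Lebesgue measure $dx$, and $\operatorname{div}_{g_1}X_i=\operatorname{div}X_i=0$.

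Next I would apply the Riemannian divergence theorem on $\sg$ to $Y=uv\,X_i$. Since $\operatorname{div}_{g_1}X_i=0$ one has $\operatorname{div}_{g_1}(uv\,X_i)=X_i(uv)=v\,X_iu+u\,X_iv$, so that
\[
\int_\sg\big(v\,X_iu+u\,X_iv\big)\,dx=\int_{\p\sg}uv\;g_1(X_i,\nu_1)\,d\sigma_1,
\]
where $\nu_1$ is the outer $g_1$-unit normal and $d\sigma_1$ the induced surface measure. As $\sg$ is unbounded I would run this first on $\sg\cap\{-R<x_{n+1}\}$ intersected with a large ball of $\G$ and then remove the truncation. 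The contribution of the flat bottom $\{x_{n+1}=-R\}$ vanishes identically for the horizontal indices $i\le m$, because there the $g_1$-normal is $\pm X_{n+1}=\pm\partial_{n+1}$ and $g_1(X_i,X_{n+1})=0$; the lateral contributions tend to $0$ under the standing assumption that $u,v$ make all integrals converge. Thus only the graph $\p\sg$ survives.

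Finally I would identify the boundary term. In the orthonormal frame the outer normal to $\{x_{n+1}=f(x)\}$ is $\nu_1=(-\nabla_1 f,1)/\sqrt{1+|\nabla_1 f|^2}$, so for $i\le m$ one has $g_1(X_i,\nu_1)=-X_if/\sqrt{1+|\nabla_1 f|^2}$. On the other hand $\nu^0_i=-X_if/\sqrt{1+|\nabla_0 f|^2}$, and by the defining relation $d\sigma_0=\frac{\sqrt{1+|\nabla_0 f|^2}}{\sqrt{1+|\nabla_1 f|^2}}\,d\sigma_1$ it follows that
\[
g_1(X_i,\nu_1)\,d\sigma_1=\frac{-X_if}{\sqrt{1+|\nabla_1 f|^2}}\,d\sigma_1=\frac{-X_if}{\sqrt{1+|\nabla_0 f|^2}}\,d\sigma_0=\nu^0_i\,d\sigma_0 .
\]
Substituting this into the divergence-theorem output and rearranging gives the claimed formula. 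The only genuinely delicate point is the justification of the divergence theorem on the unbounded subgraph; everything else reduces to the identity $\operatorname{div}_{g_1}X_i=0$ together with the bookkeeping relating the $g_0$- and $g_1$-surface measures.
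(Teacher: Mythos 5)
Your proposal is correct and follows essentially the same route as the paper's (one-line) proof: integrate by parts on the subgraph using the divergence-free left-invariant frame, then convert the $g_1$-boundary term to the horizontal one via $d\sigma_0=\frac{\sqrt{1+|\nabla_0 f|^2}}{\sqrt{1+|\nabla_1 f|^2}}\,d\sigma_1$ and $g_1(X_i,\nu_1)\,d\sigma_1=\nu^0_i\,d\sigma_0$. You simply make explicit what the paper leaves implicit (divergence-freeness of the $X_i$, the truncation of the unbounded domain, and the sign bookkeeping, which your version in fact gets right where the paper's displayed boundary term has a sign/notation slip).
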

\proof Integration by parts yields
$$\int_{\sg}  v(x) X_i u (x)dx= -\int_{\sg} u(x) X_i v(x) dx + \int_{\p \sg} u(x) v(x) \frac{X_i f}{\sqrt{1+|\nabla_1 u|^2}} d\sigma_1(x)$$
$$= -\int_{\sg} u(x) X_i v(x) dx + \int_{\p \sg} u(x) v(x) \frac{X_i f}{\sqrt{1+|\nabla_0 f|^2}}
d\sigma_0(x).$$
\endproof

\begin{lemma}\label{plane2}
Let  $\B$ be the space of periodic functions defined in Definition \ref{def-gen}. If $f\in \B\cap C^2(\G)$ let  $\sg$ its sub-graph (defined in  \eqref{subgraph}, and let $\tilde x=(x,x_{n+1})=(x,f(x))$ be a fixed point in $\p \sg$. Given $ L>0$  for every  $w \, \in \,  Lip (\G \times \R)$ with Lipschitz norm bounded by $L$,
 one has
$$\int_{ \p \sg}  \Gamma({\tilde x}^{-1}\tilde y, \tau)w(\tilde y) \, d \sigma_0 (\tilde y) =
\frac{w(\tilde x)}{\sqrt{\tau}}\int_{\Pi} \, \Gamma( z, 1) \, d\sigma_0(z) + O(1)$$
 as $\tau \rightarrow 0$ uniformly for $\tilde x \in \partial \sg$. Here
 $\Pi$ is the intrinsic tangent plane $y_{n+1}=\Pi( y):=f(x)+\sum_{d(i)=1} X_i f(x) (y_i-x_i)$ to ${\partial \sg}$ at $\tilde x$.
\end{lemma}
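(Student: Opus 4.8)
The plan is to estimate the surface integral $\int_{\p\sg}\Gamma(\tilde x^{-1}\tilde y,\tau)\,w(\tilde y)\,d\sigma_0(\tilde y)$ by exploiting the fact that, as $\tau\to 0$, the heat kernel $\Gamma(\tilde x^{-1}\cdot,\tau)$ concentrates near $\tilde x$, so only the behaviour of $\p\sg$ and of $w$ in a shrinking neighborhood of $\tilde x$ matters. The natural device is the parabolic rescaling in Proposition~\ref{prop: proprieta sol fond literature}(1): writing $\Gamma(\tilde x^{-1}\tilde y,\tau)=\tau^{-(Q+1)/2}\Gamma(\delta_{1/\sqrt\tau}(\tilde x^{-1}\tilde y),1)$ and substituting $z=\delta_{1/\sqrt\tau}(\tilde x^{-1}\tilde y)$, the integral is transported to a fixed scale where the kernel is $\Gamma(\cdot,1)$, which has Gaussian decay by Proposition~\ref{prop: proprieta sol fond literature}(2). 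First I would carry out this change of variables carefully, tracking how the surface $\p\sg$ and the measure $d\sigma_0$ transform under left translation by $\tilde x^{-1}$ followed by the anisotropic dilation $\delta_{1/\sqrt\tau}$.

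Let me set $\Sigma_\tau:=\delta_{1/\sqrt\tau}(\tilde x^{-1}\,\p\sg)$ for the rescaled surface; by left-invariance it passes through the origin, and the claim is that $\Sigma_\tau$ converges, locally uniformly on compact sets, to the intrinsic tangent plane $\Pi$ as $\tau\to 0$. This is the geometric heart of the argument: because $f\in C^2$ and the dilations are group dilations fitted to the stratification, the degree-one coordinates $y_i-x_i$ (with $d(i)=1$) scale by $\tau^{-1/2}$ while higher-degree coordinates scale faster, so the graph condition $y_{n+1}=f(x)+\sum_{d(i)=1}X_if(x)(y_i-x_i)+o(\cdot)$ linearizes in the limit to exactly $\Pi$. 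I would make this precise using a Taylor expansion of $f$ around $x$ in exponential coordinates, checking that the nonlinear and higher-degree remainder terms, after rescaling by $\delta_{1/\sqrt\tau}$, vanish uniformly on compact subsets. Simultaneously the rescaled surface measure $\tau^{-Q/2}d\sigma_0$ (the jacobian factor $\tau^{-Q/2}$ accounting for the $Q$ directions tangent to the hypersurface, with the remaining $\tau^{-1/2}$ absorbed into the normalization) converges to $d\sigma_0$ on $\Pi$; tracking this jacobian is what produces the single surviving power $\tau^{-1/2}$ in the statement.

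Putting these together, after rescaling the integral becomes
\[
\frac{1}{\sqrt\tau}\int_{\Sigma_\tau}\Gamma(z,1)\,w\bigl(\tilde x\cdot\delta_{\sqrt\tau}(z)\bigr)\,d\tilde\sigma_\tau(z),
\]
where $d\tilde\sigma_\tau$ is the rescaled measure converging to $d\sigma_0|_\Pi$. Since $w$ is Lipschitz with norm bounded by $L$, one has $w(\tilde x\cdot\delta_{\sqrt\tau}(z))=w(\tilde x)+O(\sqrt\tau\,|z|_0)$ uniformly, and the Gaussian decay of $\Gamma(\cdot,1)$ from Proposition~\ref{prop: proprieta sol fond literature}(2) makes $\int_\Pi|z|_0\,\Gamma(z,1)\,d\sigma_0(z)$ finite, so replacing $w$ by the constant $w(\tilde x)$ costs only $O(\sqrt\tau)\cdot\tau^{-1/2}=O(1)$. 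Combined with the convergence $\Sigma_\tau\to\Pi$ and the integrability of $\Gamma(\cdot,1)$ on the plane (which justifies passing to the limit by dominated convergence, again via the Gaussian bound), this yields the leading term $\tfrac{w(\tilde x)}{\sqrt\tau}\int_\Pi\Gamma(z,1)\,d\sigma_0(z)$ with an $O(1)$ error, uniformly in $\tilde x$ because the $C^2$ bounds on $f$ are uniform by periodicity.

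The main obstacle I anticipate is controlling the convergence $\Sigma_\tau\to\Pi$ and the associated surface measure \emph{uniformly} in both the cutoff at infinity and in the basepoint $\tilde x$: the rescaled surface is noncompact, so one must split the integral into a large ball $\{|z|_0\le R\}$, where geometric convergence of $\Sigma_\tau$ to $\Pi$ gives the main contribution, and a tail $\{|z|_0>R\}$, where one needs a uniform bound showing that $\Sigma_\tau$ does not fold back toward the origin and that the Gaussian decay of $\Gamma(\cdot,1)$ dominates the (possibly growing) surface measure of $\Sigma_\tau$. Establishing a uniform-in-$\tau$ upper bound on the surface area of $\Sigma_\tau\cap\{|z|_0\le \rho\}$ — essentially an Ahlfors-regularity type estimate inherited from the $C^2$ graph structure — is the delicate point that makes the dominated-convergence argument rigorous and guarantees the error is genuinely $O(1)$ rather than merely $o(\tau^{-1/2})$.
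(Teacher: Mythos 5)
Your proposal is correct and follows essentially the same route as the paper's proof: parabolic rescaling via the heat kernel's scaling property, a Taylor expansion of $f$ showing that the rescaled boundary linearizes to the intrinsic tangent plane $\Pi$, Lipschitz freezing of $w$ at $\tilde x$ at an $O(1)$ cost, and Gaussian decay of $\Gamma(\cdot,1)$ to control remainders. The only real difference is that the paper parametrizes $\p \sg$ as a graph over $\G$ from the outset, writing the surface integral as $\int_{\G}\Gamma(x^{-1}y,f(y)-x_{n+1},\tau)\sqrt{1+|\nabla_0 f(y)|^2}\,dy$ and rescaling in the base via $z=\delta_{1/\sqrt{\tau}}(x^{-1}y)$, $dy=\tau^{Q/2}dz$, which makes the surface-measure bookkeeping explicit and renders your flagged ``delicate point'' (uniform area bounds and no folding of the rescaled surface $\Sigma_\tau$) automatic, since the rescaled surface is again a graph with gradient bounded by $\sup_\G|\nabla_0 f|$.
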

\begin{proof}
Since $\p \sg$ is a graph, the integration variables are of the form $\tilde y=(y,f(y))$.
Note that
$$({\tilde x}^{-1} \tilde y )=({x}^{-1} y, f(y)-f(x)) = ({x}^{-1} y, \sum_{d(i)=1}X_i f(x) (y-x)_i +
O(|{y}^{-1} x|^2)).$$
Consequently, setting $\tilde z=\delta_{\tau^{-1/2}} (\tilde x^{-1}\tilde y)$ one obtains
\begin{multline}
\tilde z = (z,z_{n+1})= (\delta_{\tau^{-1/2}} ({x}^{-1}y),\tau^{-1/2}( f(y)-f(x)))
\\=(\delta_{\tau^{-1/2}} ({x}^{-1}y), \sum_{d(i)=1} X_i f(x) (\delta_{\tau^{-1/2}} ({x}^{-1}y))_i + O(\sqrt{\tau} |\tilde z|^2)= (z,\Pi(z)+O(\sqrt{\tau} |z|^2) ).
\end{multline}
In view of the latter and using the scaling properties of the heat kernel one has
\begin{multline}
\Gamma({\tilde x}^{-1}\tilde y,\tau)=\tau^{-Q/2-1/2} \Gamma(\tilde z,1)= \tau^{-Q/2-1/2} \, \Gamma((z,
\Pi(z)+O(\sqrt{\tau} |z|^2),1)\\ =
\tau^{-Q/2-1/2} \bigg( \Gamma((z,\Pi(z)), 1) + R (z)\bigg)
\end{multline}
where $R$ is a remainder satisfying $\int_{\G} R dz= O(\sqrt{\tau}|z|^2).$
Consider the surface integral
\begin{multline}\label{surfaces} \int_{ \p \sg}  \Gamma({\tilde x}^{-1}\tilde y, \tau)w(\tilde y) \,  d \sigma_0(\tilde y) \\= w(\tilde x)
\int_{\p \sg} \Gamma({\tilde x}^{-1}\tilde y, \tau) \, d \sigma_0(\tilde y)
+ O\Big(
\int_{\G \times \R} |{\tilde x}^{-1}\tilde y| \Gamma({\tilde  x}^{-1}\tilde y, \tau) \,  d \sigma_0(\tilde y) \Big)\\
= w(\tilde x) \int_{\G} \Gamma ({  x}^{-1}  y, f(y)-x_{n+1}, \tau) \sqrt{1+|\nabla_0 f(y)|^2} \, dy \, +\\
 \quad \quad \quad \quad \quad  + O\Big(\int_{\G} |(x^{-1}y,f(y)-x_{n+1}) | \, \Gamma ({  x}^{-1}  y, f(y)-x_{n+1}, \tau) \sqrt{1+|\nabla_0 f(y)|^2} \, dy \Big)
\end{multline}

Applying the change of variable  $ z= \delta_{\sqrt{\tau}}({ x}^{-1} \, y)  $ we have $d y= \tau^{Q/2} d z$ and obtain

\begin{multline}
\int_{\G} \Gamma ({  x}^{-1}  y, f(y)-x_{n+1}, \tau) \sqrt{1+|\nabla_0 f(y)|^2} dy  \\= \frac{1}{\sqrt{\tau}}\int_{\G}  \bigg( \Gamma((z,\Pi(z)), 1) + R (z)\bigg)  \sqrt{1+|\nabla_0 f(x+\delta_{\sqrt{\tau}}(z))|^2} dz\\
= \frac{1}{\sqrt{\tau}}\int_{\G}  \bigg( \Gamma((z,\Pi(z)), 1) + R (z)\bigg)  \sqrt{1+|\nabla_0 \Pi|^2}dz \  + O(1)\\
= \frac{1}{\sqrt{\tau}} \int_\Pi \Gamma (\tilde z,1) d\sigma_0(\tilde z) + O(1).
\end{multline}
where in the last line we have used the approximation
$\nabla f(x+\delta_{\sqrt{\tau}}(z))=\nabla f(x)+ O (\sqrt{\tau} |z|)$  and  we have denoted $\tilde z = (z , \Pi(z))$. The proof follows immediately from the latter and from \eqref{surfaces}.
\end{proof}

We are now ready to establish  the key geometric identity needed in the proof of Proposition \ref{taylorchi} and eventually
to prove Theorem \ref{MAIN}.

\begin{rem}
The previous lemma indicates that it would be plausible to have an identity of the form
$$\int_{ \p \sg}  |\nabla \Gamma({\tilde x}^{-1}\tilde y, \tau)|w(\tilde y) \, d \sigma_0 (\tilde y) =
\frac{C w(\tilde x)}{\tau} + O(\frac{1}{\sqrt{\tau}} )$$
However in the next lemma we show that for integrands with a special structure one can improve on such an identity an obtain a better decay rate.
\end{rem}

\begin{lemma}\label{plane} Consider $f\in \B$ a $C^3$ function and suppose that the horizontal mean curvature $h_0$ of its graph is bounded by  a positive constant $L$. Let  $\sg$ denote its sub-graph, defined in \eqref{subgraph}, and consider $\tilde x=(x,x_{n+1})=(x,f(x))$  a fixed point in $\p \sg$. Then
 one has
\begin{equation}\label{one-here}
\int_{\p \sg} \langle  \nabla_0 \Gamma({\tilde x}^{-1}\tilde y, \tau), \nu_0(\tilde y) \rangle_{\tilde g_0} d\sigma_0(\tilde y)=-
\frac{h_0(\tilde x)}{2\sqrt{\tau}}\int_{\Pi}\Gamma( \tilde x^{-1}\tilde z, 1)d\sigma_0(\tilde z) + O(1)
\end{equation}
as $\tau \rightarrow 0$, and the convergence is uniform with respect to the variable $\tilde x\in \partial \sg$, but depends on the parameter $L$. Here
$\Pi$ is the intrinsic tangent plane $y_{n+1}=\Pi(z):=f(x)+\sum_{d(i)=1} X_i f(x) (y_i-x_i)$ to $\sg$ at the point $\tilde x$, $d\sigma_0$ denotes the horizontal perimeter measure of $\Pi$ associated to $X_1,...,X_m, \partial_{n+1}$.
\end{lemma}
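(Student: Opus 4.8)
The plan is to reduce the surface integral to a rescaled integral over the intrinsic tangent plane, in the spirit of Lemma~\ref{plane2}, but retaining the \emph{second order} behaviour of $\p\sg$ at $\tx$: the remark preceding the statement signals precisely that the naive leading contribution, which scales like $\tau^{-1}$, must vanish, so that the genuine leading term is the $\tau^{-1/2}$ one carrying the curvature. Parametrizing $\p\sg$ by $\tilde y=(y,f(y))$ and using $d\sigma_0=\sqrt{1+|\nabla_0 f(y)|^2}\,dy$ together with $\nu_0(\tilde y)=(-\nabla_0 f(y),1)/\sqrt{1+|\nabla_0 f(y)|^2}$, the Riemannian factor cancels and the left-hand side of \eqref{one-here} becomes
$$I(\tau)=\int_\G\Big(X_{n+1}\Gamma(\tx^{-1}\tilde y,\tau)-\sum_{i=1}^m X_i f(y)\,X_i\Gamma(\tx^{-1}\tilde y,\tau)\Big)\Big|_{\tilde y=(y,f(y))}\,dy,$$
the vector fields acting in $\tilde y$. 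First I would introduce the blow-up $z=\delta_{\tau^{-1/2}}(x^{-1}y)$ and invoke the scaling property of Proposition~\ref{prop: proprieta sol fond literature}(1) together with the uniform kernel bounds of Proposition~\ref{convergence}: since each horizontal derivative is homogeneous of degree one, $X_i\Gamma$ carries a factor $\tau^{-(Q+1)/2-1/2}$, while $dy=\tau^{Q/2}dz$, so the integrand contributes at order $\tau^{-1}$.

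Second, I would show that the order $\tau^{-1}$ term cancels. Writing $\Pi$ for the tangent plane and $\sg_\Pi=\{y_{n+1}<\Pi(y)\}$ for the half-space it bounds, the leading contribution is $\tau^{-1}$ times $\int_\Pi\langle\nabla_0\Gamma(\tilde z,1),\nu_0\rangle_{\tilde g_0}\,d\sigma_0(\tilde z)$, where the pole $0$ now lies on $\Pi$. The key point is that $\sg_\Pi$ is a dilation-invariant cone through the origin (each coefficient $X_i f(x)$ multiplies a coordinate $z_i$ of degree one, matching the degree of $z_{n+1}$), so the scaling property of $\Gamma$ shows that the heat content $s\mapsto\int_{\sg_\Pi}\Gamma(\tilde z,s)\,d\tilde z$ is \emph{constant} in $s$. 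By the divergence theorem (Lemma~\ref{intbyparts} applied to $\nabla_0\Gamma$) together with $\Delta_0\Gamma=\p_s\Gamma$, the integral of the normal derivative over $\Pi$ equals $\p_s|_{s=1}\int_{\sg_\Pi}\Gamma\,d\tilde z=0$. Hence the flat tangent plane produces no $\tau^{-1}$ term, and the first surviving contribution comes from the deviation of $\p\sg$ from $\Pi$.

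Third, I would extract the curvature. Expanding $f$ to second order, $f(y)=\Pi(y)+\tfrac12\sum_{i,j=1}^m X_iX_jf(x)(y-x)_i(y-x)_j+o(|y-x|^2)$, so in blow-up coordinates the surface becomes $z_{n+1}=\Pi(z)+\sqrt\tau\,\rho(z)+o(\sqrt\tau)$ with $\rho(z)=\tfrac12\sum_{i,j}X_iX_jf(x)z_iz_j$. I would then linearize $I(\tau)$ around the plane in this $\sqrt\tau$-perturbation: differentiating the (now vanishing) plane integral in the direction $\rho$ produces, at order $\tau^{-1/2}$, one term from the variation of $X_i f$ and one from the shift of the $(n+1)$-entry of the pole. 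Integrating the former by parts tangentially along $\Pi$ brings down the second derivatives $X_iX_jf(x)$, and the symmetry of $\Gamma(\cdot,1)$ under the group inversion $\tilde z\mapsto\tilde z^{-1}$ eliminates the moments that do not match the symmetric structure of the coefficients $a_{ij}$. The surviving weighted moments of $\nabla_0\Gamma$ and $\p_{n+1}^2\Gamma$ over $\Pi$ should reassemble exactly into $\sum_{i,j=1}^m a_{ij}(\nabla_0 f(x))\,X_iX_jf(x)$, so that by the curvature formula \eqref{curvature} the coefficient of $\tau^{-1/2}$ equals $-\tfrac12 h_0(\tx)\int_\Pi\Gamma(\tilde z,1)\,d\sigma_0(\tilde z)$.

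Finally, I would bound all remainders by $O(1)$ uniformly in $\tx\in\p\sg$: the third-order Taylor remainder of $f$ (controlled by the $C^3$ norm), the curvature bound $L$, and the Gaussian decay of $\Gamma$ and its derivatives from Proposition~\ref{convergence} make the error integrable after rescaling and bounded independently of the base point. The main obstacle is the coefficient matching in the third step. In Evans' Euclidean argument the explicit Gaussian lets one compute these moments directly; here there is no explicit kernel, so the identification with $\tfrac12 h_0$ must be forced through the abstract properties of $\Gamma$ recalled in Section~\ref{approximate}, namely parabolic scaling, the product structure in the $X_{n+1}=\p_{n+1}$ direction together with inversion symmetry (hence the vanishing of the non-matching moments), and the uniform Riemannian approximation, which is what allows one to pass the corresponding Riemannian identities to the limit $\e\to0$.
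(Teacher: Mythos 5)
Your reduction of the surface integral and your argument for the cancellation of the naive $\tau^{-1}$ term are correct, and the latter is genuinely elegant: the half-space $\sg_\Pi$ below the intrinsic tangent plane is invariant under the parabolic dilations $\delta_\lambda$ (only degree-one coordinates enter its defining inequality), so its heat content $s\mapsto\int_{\sg_\Pi}\Gamma(\tilde z,s)\,d\tilde z$ is constant by the rescaling property, and the horizontal divergence theorem then gives $\int_\Pi\langle\nabla_0\Gamma,\nu_0\rangle\,d\sigma_0=0$ exactly. The paper never isolates this fact (there the cancellation is implicit in the Euclidean computation). However, your third step --- which you yourself flag as ``the main obstacle'' --- is a genuine gap, not a technicality. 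After linearizing around the plane and integrating by parts tangentially, what you need is an identity of the type $\int_\Pi z_iz_j\,\p_t\Gamma(\tilde z,1)\,d\sigma_0(\tilde z)=c\,\delta_{ij}$ for horizontal $i,j$ (together with the vanishing of the moments paired against the remaining terms), which is how the combination $\sum_{i,j}a_{ij}X_iX_jf$ collapses to the single scalar $h_0\int_\Pi\Gamma\,d\sigma_0$. In the Euclidean case this is rotation invariance of the Gaussian. A general Carnot group has no horizontal rotational symmetry, and none of the properties you invoke --- parabolic scaling, the product structure in the $\p_{n+1}$ direction, inversion symmetry $\Gamma(\tilde z^{-1},t)=\Gamma(\tilde z,t)$ --- yields isotropy of horizontal second moments (inversion symmetry only kills odd moments). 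These moment identities are in fact true, but they are essentially equivalent to the lemma itself; they cannot be extracted ``abstractly'' from Section \ref{approximate}.

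Your fallback --- pass ``the corresponding Riemannian identities'' to the limit $\e\to0$ --- is indeed the paper's strategy, but it needs an ingredient missing from your outline: for $\e>0$ the Riemannian kernel $\Gamma_\e$ is not explicit either, so the same moment problem recurs at every $\e$. The paper resolves it in three steps: first the constant-coefficient case, where after a linear change of variables the kernel is a genuine Euclidean Gaussian and the moment computation is Evans'; then arbitrary (left-invariant) Riemannian metrics via Levi's parametrix method, writing $\Gamma_\e=\Gamma_{\tilde x,\e}+J$ with the kernel $\Gamma_{\tilde x,\e}$ of the metric frozen at $\tilde x$ and showing that the correction $J$ contributes only $O(1)$ to the flux integral; and only then the limit $\e\to0$, using the uniform Gaussian bounds of Proposition \ref{convergence}, dominated convergence, $h_\e\to h_0$, $d\sigma_\e\to d\sigma_0$, and Lemma \ref{plane2} to recast the plane integral in the stated form. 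Without the freezing/parametrix step your limiting argument has nothing to start from. A second, smaller gap: your Taylor expansion of $f$ is the Euclidean one. In the blow-up $z=\delta_{\tau^{-1/2}}(x^{-1}y)$ the stratified expansion also produces the second-layer terms $\sum_{d(i)=2}X_if(x)\,z_i$, which enter at exactly the same order $\sqrt\tau$ as the horizontal Hessian, so your perturbation $\rho$ is incomplete; these terms do drop out (the plane and $d\sigma_0$ are preserved by $\tilde z\mapsto-\tilde z$ and $\Gamma$ is even in exponential coordinates, so the odd moments vanish), but any correct execution of your scheme must include and dispose of them.
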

\begin{rem}\label{plane-1}
Note that if $\tilde x\notin \p\sg$ then there exists $\al=\al(\sg,\tilde x)>0$ such that
\begin{equation}\label{one-here-1}
\int_{\p \sg} \langle  \nabla_0 \Gamma({\tilde x}^{-1}\tilde y, \tau), \nu_0(\tilde y) \rangle_{\tilde g_0} d\sigma_0(\tilde y)=O(e^{-\al/\tau}).
\end{equation}
In particular the left-hand side is uniformly bounded by $O(\tau^{-1/2})$ for any point $\tilde x\in \G\times \R$.
\end{rem}
\begin{proof}
The proof proceeds in three steps.
We will start considering the extremely simplified case in which the group $\G$ is the Euclidean space and the horizonal tangent plane coincides with the whole tangent plane, so that the sub-Riemannian metric reduces to a Riemannian one, and $m=n$. For reader convenience we will keep also in this case the same notations used in the rest of the paper. In the first step the metric will be constant, in the second step we consider an arbitrary Riemannian metric, while the subriemannian case will be studied in the third step as limit of the Riemannian ones.
\\
{\bf Step 1}
Let us first assume that  $\G=\R^n$ with  the Euclidean group structure and that we have a constant coefficient Riemannian metric $\tilde g_0$ defined on the whole tangent space of $\G\times\R$.
Up to a  change of variables we can assume that $\tilde x$ is the origin,
that $(X_1, \cdots X_{n}) = (\partial_{y_1}, \cdots, \partial_{y_n}) $ is an orthonormal frame for
$T_0\p\sg$ and $X_{n+1}$ coincides with the outer unit normal $\nu_0$ to $\sg$ at $0$.
In these coordinates the matrix associated to the metric $\tilde g_0$ is the identity,
the heat operator (defined in Section 2) simply reduces to the standard Euclidean one  $\p_t-X^2_i$.

There exists a neighborhood $U\subset \G\times \R$ of the origin where
$U\cap \p\sg$ can be represented as a graph of a smooth function over $U\cap T_0\p\sg=U\cap\{\tilde y_{n+1}=0\}$.
We will still denote by $f$ this function
and by $(y,f(y))$ its graph. The normal is expressed  $\nu_0(\tilde y) d\sigma_0(\tilde y)=(-\nabla_0 f(y), 1) dy$
and, by the choice of coordinates,  $\nabla_0 f(0)=0$.
It follows that
$$
\int_{\p \sg} \langle  \nabla_0 \Gamma({\tilde x}^{-1}\tilde y, \tau) ,\nu_0(\tilde y) \rangle_{\tilde g_0} d\sigma_0(\tilde y)=
\int_{\p\sg\cap U}\langle \nabla_0 \Gamma({\tilde x}^{-1}\tilde y, \tau) ,\nu_0(\tilde y)\rangle_{\tilde g_0}  d\sigma_0(\tilde y)+O(e^{-\al/t})$$
(where $\alpha>0$ is a suitable constant)
$$= -\int_{U\cap T_0\p\sg} \frac{\langle(y, f(y)), (-\nabla_0 f(y), 1)\rangle_I}{2t} \Gamma(y, f(y), t)  dy + O(e^{-\alpha/t})
$$
where  $\langle \cdot, \cdot\rangle_I$ denotes the Euclidean scalar product. Extending in a periodic fashion $f$ to all of $T_0\p\sg$ and identifying the latter with $\R^n$ one obtains
$$
=-
 \int_{\R^n} \frac{ f(y) - y \nabla_0 f(y)}{2t} \Gamma(y, f(y), t)  dy + O(e^{-\alpha/t})
$$

The integrand function can be expanded as follows:
$$f(y) - y \nabla_0 f(y) = -\frac{1}{2}y_i y_j \partial_{ij}f(0) + O(|y|^{3}) $$
Applying the change of variable $y= \sqrt{t}  z$,
and the scaling property of the heat kernel in $\R^{n+1}$, we arrive at
\begin{align*} &
\int_{\p \sg} \langle \nabla_0  \Gamma(\tilde y, \tau), \nu_0(\tilde y) \rangle_{\tilde g_0} d\sigma_0(\tilde y)  = \frac{\partial_{ij}f(0)}{2\sqrt{t}}\bigg( \int_{\R^n} \frac{z_i z_j}{2} \, \Gamma(z, 0, 1)  \, dz + O(\sqrt{t})\bigg)\\
& = \frac{\partial_{ii}^2f(0) }{2\sqrt{t}} \,  \int_{\Pi} \Gamma(z,0,1) d\sigma_0(z) + O(1) =-
\frac{h(\tilde x)}{2\sqrt{\tau}}\int_{\Pi}\Gamma( \tilde z, 1)d\sigma_0(z)  + O(1),
\end{align*}
here we have used the fact that in this system of coordinates $h_0(0)=- \partial_{ii}f(0)$. This concludes the proof in the constant metric case. We explicitly remark that in the setting of step 1 the hyperplane $\Pi$ coincides with the tangent plane to $\sg$ at $\tilde x$.

\bigskip

{\bf Step 2 }  Let us now assume that the metric  $\tilde g_0 =  {\tilde \matriceB}^T \, \tilde \matriceB$ is an arbitrary Riemannian metric on $\R^{n+1}$, invariant with respect to a Lie group  $\G$. We can define  an orthonormal frame in $\R^{n+1}$
by setting $X_{i} =    ({\tilde \matriceB} ^{-1})^T_{ij}\partial_j,$ with $i,j=1,...,n+1$.
The associated heat operator  now contains first order terms
$$L= \partial_t - \sum_{i=1}^{n+1} X_{i}^2=
\partial_t  - {\tilde g}^{kj} \partial_{jk}^2
- {\tilde \matriceB}^{-1}_{ji}\partial_j({\tilde \matriceB}^{-1})^T_{ik} \partial_{k}=\partial_t  - {\tilde g}^{kj} \partial_{jk}^2
- {\tilde a}_k\partial_{k}$$
where $\{{\tilde g}^{ij}\}$ denotes the inverse of $g$, and   $\tilde a_{k} = {\tilde \matriceB}^{-1}_{ji}\partial_j({\tilde \matriceB}^{-1})^{T}_{ik} .$
As in step 1 we can assume without loss of generality that  $\nabla_0 f(\tilde x) = 0$. Consequently  $- a_k(\tilde x)=0$ and the associated  constant coefficient operator, obtained evaluating the coefficients at the point $\tilde x$, reduces to:
\begin{align*}
L_{\tilde x}= \partial_t - \sum_{i=1}^{n+1} X_{\tilde x,i}^2=
\partial_t  - g^{kj}(\tilde x) \partial_{jk}^2,
\end{align*}
where we have denoted $X_{\tilde x,i}$ the vector fields with coefficients evaluated at the point ${\tilde x}$. We will denote by  $\nabla_{\tilde x} =(X_{\tilde x,1}, \cdots X_{\tilde x, n+1})$ the gradient along such frozen coefficients vector fields.
We will also denote  by $\Gamma$ and $\Gamma_{\tilde x}$ the heat kernels of $L$ and $L_{\tilde x}$.
Finally we denote by $h_0$ and $h_{0,\tilde x}$  the mean curvatures of the graph with respect to the metric $g_0$ and with respect to the   frozen metric $g(\tilde x)$. Note that at the point $\tilde x$ one has $h_{\tilde x}({\tilde x})= h({\tilde x}).$

We want to emphasize that in this step of the proof we are never going to use the invariance properties of  the heat operator  $L$ and its   fundamental solution $\Gamma$. Because of this, the group structure itself  becomes irrelevant and the fundamental solutions will be considered as a function defined on $(\R^{n+1}\times \R^{+})^2$, thus requiring the notation $\Gamma((\tilde y,t),  (\tilde x,\tau))$
(see also discussion at the beginning of Section 2 and \eqref{notazione}).
%In view of this observation we remark that
%the result in this step 2 still holds in any Riemannian setting. See Remark \ref{Riemanniancase} for the precise statement.

Using the  Levi's parametrix method (see for instance \cite{MIRANDA}, \cite{Friedman}), it is known that
one has
\begin{equation}\label{parametrix0}
\Gamma((\tilde y,t),  (\tilde x,\tau))= \Gamma_{\tilde x} ((\tilde y,t),  (\tilde x,\tau)) + O\Big(\frac{1}{ (t-\tau)^{n/2}} e^{- C_2\frac{|\tilde y-\tilde x |^2}{t-\tau}}\Big).\end{equation}More precisely \begin{equation}\label{parametrix1}\Gamma((\tilde y,t),  (\tilde x,\tau))= \Gamma_{\tilde x} ((\tilde y,t),  (\tilde x,\tau)) + J((\tilde y,t),  (\tilde x,\tau))\end{equation}
where
$$J((\tilde y,t),  (\tilde x,\tau))= \int_{\R^{n+1}\times[\tau,t]}\Gamma_{\tilde z} ((\tilde y,t), (\tilde z, s))Z_1((\tilde z, s), (\tilde x,\tau)) d\tilde z ds + O\Big(\frac{1}{ (t-\tau)^{(n-1)/2}} e^{- C_2\frac{|\tilde y-\tilde x |^2}{t-\tau}}\Big)$$
and $$Z_1((\tilde y, t), (\tilde x,\tau))=\sum_{ij}({\tilde g}^{ij}(\tilde y) - {\tilde g}^{ij}(\tilde x))\partial_{ij}\Gamma_{\tilde x} ((\tilde y,t), (\tilde x,\tau))+ \sum_{i}({\tilde a}^{i}(\tilde y) - {\tilde a}^{i}(\tilde x))\partial_{i}\Gamma_{\tilde x} ((\tilde y,t), (\tilde x,\tau)),$$
so that, for some constant $C=C(\G,g)>0$,
$$|Z_1((\tilde y, t), (\tilde x,\tau))|\leq C \frac{\Gamma((\tilde y, t), (\tilde x,\tau))}{\sqrt{t-\tau}}.$$

Clearly one also has \begin{equation}\label{parametrix}
\nabla_0\Gamma((\tilde y,t),  (\tilde x,\tau))= \nabla_{0}\Gamma_{\tilde x} ((\tilde y,t),  (\tilde x,\tau)) + \nabla_{0}J((\tilde y,t),  (\tilde x,\tau)).\end{equation}

Let us consider the integral
\begin{equation}\label{gradientJ}
\int_{\p \sg} \langle \nabla_0 J((\tilde y,t),  (\tilde x,\tau)),\nu_0(\tilde y) \rangle_{\tilde g_0} d\sigma_0(\tilde y)=\end{equation}

$$=
\int_{\p \sg} \langle \Bigg( \int_{\R^{n+1}\times[\tau,t]}\nabla \Gamma_{\tilde z} ((\tilde y,t), (\tilde z, s))Z_1((\tilde z, s) , (\tilde x,\tau)) d\tilde z ds\Bigg), \
\nu_0(\tilde y)  \rangle_{\tilde g_0} d\sigma_0(\tilde y)=$$
$$=
\int_{\R^{n+1}\times[\tau,t]} \Big(\int_{\p \sg}\langle  \nabla_0 \Gamma_{\tilde z} ((\tilde y,t), (\tilde z, s)),
\nu_0(\tilde y) \rangle_{\tilde g_0} d\sigma_0(\tilde y)\Big) Z_1((\tilde z, s), (\tilde x,\tau)) d\tilde z ds
.$$
In view of Remark \ref{plane-1} there exists $C=C(\G,g,\sg)>0$ such that

$$
\Bigg|\int_{\p \sg} \langle \nabla_0 J((\tilde y,t),  (\tilde x,\tau)),\nu_0(\tilde y) \rangle_{\tilde g_0} d\sigma_0(\tilde y)\Bigg|\le C \int_{\R^{n+1}\times[\tau,t]} \frac{|Z_1((\tilde z, s), (\tilde x,\tau))| }{\sqrt{t-s}}d\tilde z ds + O(e^{-\alpha/(t-\tau) })=$$
$$\le C\int_\tau^t \frac{1}{\sqrt{(t-s)(s-\tau))}}\int_{\R^{n+1}}  \Gamma_{\tilde z}((\tilde y,t), (\tilde z, s)) d\tilde z ds +O(e^{-\alpha/(t-\tau) })=$$$$=O\bigg(\int_\tau^t \frac{ds}{\sqrt{(t-s)(s-\tau)}}\bigg) + O(e^{-\alpha/(t-\tau) })=O \Big(\int_0^1 \frac{dr}{\sqrt{r(1-r)}}\Big), $$
where, in the last line, we have used the change of variable $r=(s-\tau)/(t-\tau)$.

 Consequently,
$$\int_{\p \sg}  \langle \nabla_0 \Gamma((\tilde y, t), (\tilde x, \tau)), \nu_0(\tilde y) \rangle_{\tilde g_0} d\sigma_0(\tilde y)=$$$$=
\int_{\p \sg} \langle  \nabla_0\Gamma_{\tilde x}((\tilde y, t), (\tilde x, \tau)), \nu_0(\tilde y) \rangle_{\tilde g_0} d\sigma(\tilde y)+$$$$+\int_{\p \sg} \langle \nabla_0 J((\tilde y, t), (\tilde x, \tau)) ,\nu_0(\tilde y) \rangle_{\tilde g_0} d\sigma_0(\tilde y)+
O\Bigg( \int_{\p \sg} \frac{1}{(t-\tau)^{n/2}} e^{- C_2\frac{|\tilde x - \tilde y|^2}{t-\tau}} d\sigma(\tilde y)\Bigg).$$
Denoting by $d\sigma_{g(\tilde x)}$ the surface measure corresponding to the frozen metric $g(\tilde x)$ and using the statement in  Step 1, one has
\begin{multline}\label{step2-1}
\int_{\p \sg}  \langle \nabla_0 \Gamma((\tilde y, t), (\tilde x, \tau)), \nu_0(\tilde y) \rangle_{\tilde g_0} d\sigma_0(\tilde y)=-\frac{h(\tilde x)}{2\sqrt{t-\tau}}\int_{\Pi}\Gamma_{\tilde x}((\tilde y, 1), (\tilde x, 0))
d\sigma_{g(\tilde x)} (\tilde y) + O(1) \\
\text{ (using Euclidean rescaling) } \qquad =-\frac{h(\tilde x)}{2}\int_{\Pi}\Gamma_{\tilde x}((\tilde y, t-\tau), (\tilde x, 0))d\sigma_{g(\tilde x)} (\tilde y) + O(1)=
\end{multline}(by estimate \eqref{parametrix0})
\begin{equation}\label{step2-2}-\frac{h(\tilde x)}{2}\int_{\Pi}\Gamma((\tilde y, t-\tau), (\tilde x, 0))d\sigma(\tilde y)+ O(1)
%=
%-\frac{h(\tilde x)}{2\sqrt{t-\tau}}\int_{\Pi}\Gamma( (\tilde z, 1), (\tilde x, 0))d\sigma(\tilde y) + O(1).
\end{equation}
We explicitly remark that also in the setting of step 2 the hyperplane $\Pi$ coincides with the tangent plane to $\sg$ at $\tilde x$.

{\bf Step 3} Let us now assume that $\Gamma$ is a sub-Riemannian heat kernel corresponding to a sub-Riemannian metric $g_0$ in the Carnot group $\G$.  From Section \ref{approximate} one has a sequence of left-invariant Riemannian metrics $g_\e$ in $\G$ such that $(\G,d_\e)\to (\G,d_0)$ in the Gromov-Hausdorff topology and corresponding sequence of Riemannian heat kernels
$\Gamma_\e$ satisfying the uniform estimates in Proposition \ref{convergence}. Applying step 2 at every level $\e>0$ and using the fact that $g_\e$ are left-invariant, one has the identities
\begin{equation}\label{one-here-step3}
\int_{\p \sg} \langle  \nabla_\e \Gamma_\e({\tilde x}^{-1}\tilde y, \tau), \nu_\e(\tilde y) \rangle_{\tilde g_\e} d\sigma_\e(\tilde y)=-
\frac{h_\e(\tilde x)}{2}\int_{T_{\tilde x}\p\sg}\Gamma_\e( \tilde x^{-1} \tilde y, \tau)d\sigma_\e(\tilde y) + O(1), \text{ as }\tau\to 0,
\end{equation}
where the bounds in $O(1)$ are uniform in $\e$. If we denote by $\Gamma_{\tilde x, \e}$ the heat kernel corresponding to the frozen Riemannian metric $g_\e(\tilde x)$ (no longer left-invariant with respect to $\G$) then \eqref{step2-1} and \eqref{step2-2} yield
\begin{equation}\label{one-here-step3-frozen}
\int_{\p \sg} \langle  \nabla_\e \Gamma_\e({\tilde x}^{-1}\tilde y, \tau), \nu_\e(\tilde y) \rangle_{\tilde g_\e} d\sigma_\e(\tilde y)=-
\frac{h_\e(\tilde x)}{2\sqrt{\tau}}\int_{T_{\tilde x}\p\sg}\Gamma_{\tilde x,\e}( ( \tilde z, 1), (\tilde x,0))d\sigma_{g_\e(\tilde x)} (\tilde z) + O(1), \text{ as }\tau\to 0,
\end{equation}
where the bounds in $O(1)$ are uniform in $\e$.

 In view of Proposition \ref{convergence} and the dominated convergence theorem it follows that the left-hand side integrals
$$\int_{\p \sg} \langle  \nabla_\e \Gamma_\e({\tilde x}^{-1}\tilde y, \tau), \nu_\e(\tilde y) \rangle_{\tilde g_\e} d\sigma_\e(\tilde y)$$
converge to the left-hand side of \eqref{one-here} as $\e\to 0$.

Next we turn our attention to the right-hand side of \eqref{one-here}.  First we recall that at every point $h_\e\to h_0$ as $\e\to 0$ and $d\sigma_\e\to d\sigma_0$ as measures.
%Since $\Gamma_\e$ are invariant with respect to Euclidean dilations, with the natural change of variable $\tilde z = \tilde y /t$  we get
%$$\frac{1}{\sqrt{\tau}}\int_{T_0\p\sg}\Gamma_\e(  \tilde z, 1)d\sigma_0(\tilde z)=
% \int_{T_0\p\sg}\Gamma_\e( \tilde y, t)d\sigma_0(\tilde y).
%$$
Using  the uniform bounds in Proposition \ref{convergence}  along with dominated convergence we deduce that  $$\lim_{\e\to 0} \int_{T_0\p\sg}\Gamma_\e( \tilde y, t)d\sigma_\e(\tilde y)=\int_{T_0\p\sg}\Gamma( \tilde y, t)d\sigma_0(\tilde y)
$$
$$ \text{ (applying Lemma \ref{plane2}) } \qquad =
\frac{1}{\sqrt{\tau}}\int_{\Pi}\Gamma( \tilde z, 1)d\sigma_0(\tilde z) + O(1),
$$
%as we see, applying the intrinsic dilations $\tilde y = \delta_t \tilde z$
 \end{proof}
 which completes the proof.

\begin{rem}\label{Riemanniancase}
The result in Lemma \ref{plane} seems to be new even in the Riemannian setting  (Step 2 in the previous proof). Since we find that such extension may be of independent interest we state it explicitly.

\begin{cor}\label{Riemannian-setting}
Consider  a Riemannian manifold $(N,g)$ and a smooth embedded hypersurface $M\subset N$ endowed with the induced metric.
Denote by  $\Gamma$ be the  heat kernel on $(N,g)$,  and for $\tilde x\in N$  denote by $\Gamma_{\tilde x}$ the heat kernel in $N$ corresponding to the frozen metric $g(\tilde x)$ viewed as a function defined on
$(T_{\tilde x} N\times \R^+)^2$.
%Assume that $G$ is such that $\Gamma_G$ satisfies the Gaussian estimates in \eqref{item: asymptotic decay}.
%DO WE NEED THIS?
 We also consider
$d\sigma_g$, the induced  volume element  on $M$, the unit  vector field $\nu$ normal to $M$, and by $h$ the mean curvature of $M$. For
 For every $\tilde x\in M$ and $t>s>0$  one has
\begin{equation}\label{one-here-2}
\int_{M} \langle \nabla_g\Gamma((\tilde x,s),( \tilde y,t)) ,\nu(\tilde y\rangle_G d\sigma_g(\tilde y)=-
\frac{h(\tilde x)}{2\sqrt{t-s}}\int_{T_{\tilde x}M}\Gamma_{\tilde x}((\tilde x,0),(z,1) )d\sigma_{\tilde x}(z) + O(1)
\end{equation}
as $t \rightarrow s$ uniformly for $\tilde x\in M$. Here we have denoted by $d\sigma_{\tilde x}$ the volume element on $T_{\tilde x} M$ induced by the metric $g(\tilde x)$ on $T_{\tilde x} N$.\end{cor}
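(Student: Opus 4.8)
The plan is to recognize that this corollary is precisely Step 2 of the proof of Lemma \ref{plane}, together with the observation — made explicitly there — that Step 2 never uses the invariance of the operator or its fundamental solution, so the Lie group structure is irrelevant and the argument transfers with only cosmetic changes to an arbitrary Riemannian manifold. Concretely, I would first localize near $\tilde x$: in a coordinate chart I choose a $g(\tilde x)$-orthonormal frame $X_1, \dots, X_{n+1}$ with $X_1, \dots, X_n$ spanning $T_{\tilde x} M$ and $X_{n+1} = \nu(\tilde x)$, so that near $\tilde x$ the hypersurface $M$ is a graph $y_{n+1} = f(y)$ over $T_{\tilde x} M$ with $\nabla f(\tilde x) = 0$ and $\nu(\tilde y)\, d\sigma_g(\tilde y) = (-\nabla f(y), 1)\, dy$. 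The Gaussian decay of the kernel away from $\tilde x$ lets me discard the part of the integral outside a small neighborhood at the cost of an $O(e^{-\alpha/(t-s)})$ error.

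Next I would freeze the metric at $\tilde x$, producing the constant-coefficient operator $L_{\tilde x} = \partial_t - g^{kj}(\tilde x)\partial^2_{jk}$, whose heat kernel $\Gamma_{\tilde x}$ is an explicit anisotropic Gaussian, and invoke the Levi parametrix method to write $\Gamma = \Gamma_{\tilde x} + J$, where $J$ is the space-time convolution of frozen kernels against an error kernel $Z_1$ obeying $|Z_1((\tilde z, s'), (\tilde x, s))| \le C\,\Gamma/\sqrt{s'-s}$. The principal part of the surface integral then comes from $\Gamma_{\tilde x}$, while $J$ must be shown to be a lower-order remainder.

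The main obstacle, and the only genuine analytic work, is to show that the correction $J$ contributes only $O(1)$ to the surface integral. For this I would apply the bound recorded in Remark \ref{plane-1} — that the normal-derivative surface integral of a frozen heat kernel against $\nu$ is $O((t-s')^{-1/2})$ with pole on $M$ and exponentially small off $M$, uniformly — interchange the order of integration, and combine it with $|Z_1| \le C\,\Gamma/\sqrt{s'-s}$ and the reproducing bound $\int \Gamma_{\tilde z}\, d\tilde z = O(1)$. This reduces the $J$-contribution to $C\int_s^t \frac{ds'}{\sqrt{(t-s')(s'-s)}} = O\!\left(\int_0^1 \frac{dr}{\sqrt{r(1-r)}}\right)$ after the substitution $r = (s'-s)/(t-s)$, and the last integral is finite; hence $J = O(1)$ and the singular $(s'-s)^{-1/2}$ factor is exactly absorbed.

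Finally I would close with the frozen computation of Step 1: expanding $f(y) - y\cdot\nabla f(y) = -\tfrac12 y_i y_j \partial_{ij}f(\tilde x) + O(|y|^3)$, rescaling $y = \sqrt{t-s}\,z$ and using Gaussian scaling, the leading coefficient becomes $\tfrac{\partial_{ii}f(\tilde x)}{2\sqrt{t-s}} = -\tfrac{h(\tilde x)}{2\sqrt{t-s}}$ multiplied by $\int_{T_{\tilde x}M}\Gamma_{\tilde x}((\tilde x,0),(z,1))\,d\sigma_{\tilde x}(z)$, which is exactly the asserted main term, uniformly in $\tilde x \in M$.
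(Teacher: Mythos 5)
Your proposal is correct and matches the paper's own treatment: the authors prove this corollary precisely by observing that Step 2 of the proof of Lemma \ref{plane} (localization and graph representation, frozen metric plus Levi parametrix with the $|Z_1|\le C\,\Gamma/\sqrt{s'-s}$ bound, the Remark \ref{plane-1} estimate absorbing the $J$-correction into the beta-function integral $\int_0^1 dr/\sqrt{r(1-r)}$, and the Step 1 Gaussian computation) never uses the group structure and is purely local. Your write-up simply spells out those same ingredients in the manifold setting, so there is nothing to add.
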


 The proof follows closely the argument in Step 2 of the previous Lemma \ref{plane}. As we already noted, in that argument  we never used the group law structure, and Step 2 can be applied for any Riemannian metric and independently of the presence of a group structure. The result is local and we used the splitting of the space in $\G\times \R$ only to express the boundary of $S$ as a graph. Since this can be always done, under suitable regularity assumptions, the result in Step 2 holds in any Riemannian metric.

% For its independent interest, we state it here in this general setting, with classical used Riemannian notations.
%We will denote  $ g$ a  Riemannian metric, $<\cdot, \cdot>_g $ the Riemannian induced scalar product, $\nabla^g$ the Riemannian gradient, $\nu^g = G^{-1} \, \nu^I$  the Riemannian normal, and $h^g:=g^{ij}\p_{ij}^2 f$ is the mean curvature of the graph of $f$. Then Step 2 ensures that
%
%\begin{align}\label{riemann}
% \int_{\partial \sg} < \nabla_{\widetilde y}^g \Gamma((\widetilde y , t), (\widetilde x,0)) , \, \nu^g(\widetilde y) >_g \, d \sigma^g(\widetilde y) = \dfrac{h^g(\widetilde x)}{2 \sqrt{t}} \, \int_{\Pi} \Gamma(z,1) d_0^g|\Pi|(z) + O(1).
%\end{align}
\end{rem}
\bigskip

We can now prove the asymptotic  expansion  stated in the Proposition \ref{taylorchi} in the introduction

\bigskip

{\bf Proof of Proposition \ref{taylorchi}}
Let $\rho^{\e}$ be a smooth mollification of $\chi_\sg$.
For any interval $(t_1,t_2)\subset \R^+$, from the definition of heat kernel,
%and denoting by $\nu_0=(-\nabla_0 f,1)/\sqrt{1+|\nabla_0 f|^2}$ the outer pointing horizontal normal to $\sg$,  one immediately has
\begin{multline} 0= \lim_{R\to \infty} \int_{t_1}^{t_2}\int_{  \{  |{\tilde x}^{-1}\tilde y|\le R\}}(\partial_t-\mathcal L)\Gamma({\tilde x}^{-1}\tilde y, \tau)\rho^{\e}(\tilde y)d\tilde y
= \Big(\int_{\G\times \R}\Gamma({\tilde x}^{-1}\tilde y, \tau)\rho^{\e}(\tilde y)d\tilde y \Big)_{t_1}^{t_2} \\
-
\lim_{R\to \infty} \int_{t_1}^{t_2} \int_{ \{ |{\tilde x}^{-1}\tilde y|=R\}}\!\!\!\!\!\langle \nabla_0\Gamma({\tilde x}^{-1}\tilde y, \tau), \nu_0(\tilde y)\rangle_0 \ \rho^{\epsilon}(\tilde y)d\sigma_0(\tilde y) d\tau
\\
+\int_{t_1}^{t_2} \int_{\G\times \R} \langle \nabla_0 \Gamma({\tilde x}^{-1}\tilde y, \tau),\nabla_0 \rho^{\epsilon}(\tilde y)\rangle_0 d \tilde y d\tau,
\end{multline}
Letting $t_1,\epsilon\to  0$, one obtains
 \begin{multline}0= \int_{\G\times \R}\Gamma({\tilde x}^{-1}\tilde y, t_2)\chi_\sg(\tilde y)d\tilde y - \lim_{t_1 \rightarrow 0}\int_{\G\times \R}\Gamma({\tilde x}^{-1}\tilde y, t_1)\chi_\sg(\tilde y)d\tilde y   \\
 -
\lim_{R\to \infty} \int_0^{t_2} \int_{\{|{\tilde x}^{-1}\tilde y|=R\}}\!\!\!\!\!\! \langle \nabla_0\Gamma({\tilde x}^{-1}\tilde y, \tau), \nu_0(\tilde y)\rangle_0 \chi_\sg(\tilde y)(\tilde y)d\sigma_0(\tilde y) d\tau
\\+\int_0^{t_2} \int_{\p \sg}\langle \nabla_0 \Gamma({\tilde x}^{-1}\tilde y, \tau),\nu_0(\tilde y) \rangle_0 d\sigma(\tilde y) d\tau,
\end{multline}
Let us note that since $\widetilde x \in \p \sg$ then  $$\lim_{t_1\rightarrow 0}\int_{\G\times \R}\Gamma({\tilde x}^{-1}\tilde y, t_1)\chi_\sg(\tilde y)d\tilde y= \frac{1}{2}$$
Hence
\begin{align}\nonumber\int_{G\times \R}\Gamma({\tilde x}^{-1}\tilde y, t_2)\chi_\sg(\tilde y)d\tilde y &= \frac{1}{2}-
\lim_{R\to \infty} \int_0^{t_2} \int_{\{ \ |{\tilde x}^{-1}\tilde y|=R\}}\!\!\!\!\!\!\! \langle \nabla_0\Gamma({\tilde x}^{-1}\tilde y, \tau), \nu_0(\tilde y)\rangle_0 \ \chi_\sg(\tilde y)d\sigma_0(\tilde y) d\tau \\ \nonumber &
+ \int_0^{t_2}\int_{\p \sg}\langle \nabla_0 \Gamma({\tilde x}^{-1}\tilde y, \tau),\nu_0(\tilde y)\rangle_0 d\sigma(\tilde y) d\tau,
\end{align}

Next we show that
\begin{equation}\label{small}
\lim_{R\to \infty} \int_0^{t_2} \int_{\{ \ |x^{-1}y|=R\}}\!\!\!\!\!\!\! \langle \nabla_0\Gamma({\tilde x}^{-1}\tilde y, \tau), \nu_0(\tilde y)\rangle_0 \ \chi_\sg(\tilde y)d\sigma_0(\tilde y) d\tau =0.
\end{equation}
To see this we recall  that the $d\sigma_0$ perimeter of $\p B(0,R)$  in $\G\times \R$ is
given by $\int_{\p B(0,R)} d\sigma_0 = C_{\G} R^{Q}$. From this and from the heat kernel estimates
one has
$$ \int_{\{|{\tilde x}^{-1}\tilde y|=R\}}\!\!\!\!\!\!\! \langle \nabla_0\Gamma({\tilde x}^{-1}\tilde y, \tau), \nu_0(\tilde y)\rangle_0 \ \chi_\sg(\tilde y)d\sigma_0(\tilde y) d\tau
\le C_{\G} R^{-2} \int_0^{t_2} \bigg(\frac{R^2}{\tau}\bigg)^{(Q+2)/2} e^{-\frac{R^2}{c \tau}} d\tau,$$
which implies \eqref{small}.

%%%%%%%%%%%%%%%%%%

Applying Lemma \ref{plane} one concludes
\begin{align}
\int_{\G\times \R}\Gamma({\tilde x}^{-1}{\tilde y}, t_2)\chi_\sg({\tilde y})d{\tilde y}  &=\frac{1}{2}
-\int_0^{t_2} \bigg(\frac{h_0(\tilde x)}{2\sqrt{\tau}}\int_{\Pi}\Gamma(z, 1)d\sigma_0(z) + O(1) \bigg) d\tau
\\ &=\frac{1}{2} - h_0(\tilde x) \int_{\Pi}\Gamma(z, 1)d\sigma_0(z)  \int_0^{t_2} \frac{1}{2\sqrt{\tau}} d\tau
+O(t_2)
\\ &=\frac{1}{2}
- h_0 (\tilde x)  \sqrt{t_2} \int_{\Pi}\Gamma(z, 1)d\sigma_0(z) + O(t_2).
\end{align}
%\end{proof}

\begin{teor}
 Let  $f\in \B$ be a smooth function, and denote $\sg$ its subgraph, defined
 in \eqref{subgraph}. If $x\in \G$, $\tilde x = (x, f(x))$
$\al \, \in \, \R^{m+1} $  and $t>0$, denote
\begin{align*}
 q(t) = (x,f(x))\mathrm{exp}(\sum_{d(i)=1}\al_i(t) \, X_i )
\end{align*}
Then we have
\begin{align*}
 q(t) \, \in \, \partial (\mathcal{H}(t)\sg) \Longrightarrow  \sum_{d(i)=1}\al_i\nu_0^i = -   h_0(\tilde x) \, t  + O(t^{3/2}) \text{ as }t\to 0,
\end{align*}
where $h_0(\tilde x)$ is the horizontal mean curvature of $\p \sg$ in $\tilde x$ and $\nu_0=(\nu_0^1,...,\nu_0^{n+1})$.

\end{teor}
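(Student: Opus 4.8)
The plan is to reduce the geometric statement to the pointwise asymptotics already available: the condition $q(t)\in\partial(\mathcal H(t)\sg)$ means precisely that the heat evolution $w(\cdot,t)$ of $\chi_\sg$ satisfies $w(q(t),t)=1/2$, while $\tilde x\in\partial\sg$ gives $w(\tilde x,t)=\tfrac12-\sqrt t\,h_0(\tilde x)C_\Pi+O(t)$ by Proposition~\ref{taylorchi}, where I abbreviate $C_\Pi:=\int_\Pi\Gamma(\tilde z,1)\,d\sigma_0(\tilde z)>0$. First I would write $q(t)=\tilde x\exp(X_\alpha)$ with $X_\alpha:=\sum_{d(i)=1}\alpha_i(t)X_i$ and expand $w(q(t),t)$ along this horizontal one-parameter subgroup. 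Since the $X_i$ are left-invariant and act on the pole, translating the argument of $\Gamma$ by $\exp(X_\alpha)$ is exactly the Taylor series of $\Gamma$ generated by $X_\alpha$, so
\[
w(q(t),t)=w(\tilde x,t)+\langle\alpha,\nabla_0 w(\tilde x,t)\rangle+\tfrac12\!\int_\sg (X_\alpha^2\Gamma)(\tilde y^{-1}\tilde x,t)\,d\tilde y+\cdots .
\]
Throughout I work in the relevant regime $|\alpha(t)|=O(t)$, i.e.\ $q(t)\to\tilde x$ at the curvature rate; this is where the statement is meaningful and it will be checked a posteriori to be self-consistent.

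The heart of the argument is the first-order term $\langle\alpha,\nabla_0 w(\tilde x,t)\rangle=\int_\sg (X_\alpha\Gamma)(\tilde y^{-1}\tilde x,t)\,d\tilde y$. I would integrate by parts over $\sg$ by means of Lemma~\ref{intbyparts}: using the symmetry $\Gamma(g,t)=\Gamma(g^{-1},t)$ and the left-invariance of the frame, the volume integral of the pole-derivative equals, up to a sign, the surface integral $-\int_{\partial\sg}\Gamma(\tilde y^{-1}\tilde x,t)\langle\alpha,\nu_0(\tilde y)\rangle\,d\sigma_0(\tilde y)$ plus interior corrections coming from the discrepancy between left- and right-invariant differentiation. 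Applying Lemma~\ref{plane2} with the Lipschitz weight $\tilde y\mapsto\langle\alpha,\nu_0(\tilde y)\rangle$ (whose Lipschitz norm is $O(|\alpha|)$ since $f\in C^2$) then yields
\[
\langle\alpha,\nabla_0 w(\tilde x,t)\rangle=-\frac{C_\Pi}{\sqrt t}\,\langle\alpha,\nu_0(\tilde x)\rangle+O(|\alpha|),
\]
so that the full horizontal gradient of $w$ is, to leading order, normal: $\nabla_0 w(\tilde x,t)=-C_\Pi\,t^{-1/2}\,\nu_0(\tilde x)+O(1)$.

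It remains to collect orders. The second-order integral is $O(|\alpha|^2\,t^{-1})=O(t)$ by the kernel bounds of Proposition~\ref{convergence} (two horizontal derivatives of $\Gamma$ integrate to $t^{-1}$), and likewise all higher terms. Substituting the expansion of $w(\tilde x,t)$ and the gradient estimate into $w(q(t),t)=\tfrac12$ gives
\[
0=-\sqrt t\,h_0(\tilde x)\,C_\Pi-\frac{C_\Pi}{\sqrt t}\sum_{d(i)=1}\alpha_i\nu_0^i+O(t)+O(|\alpha|).
\]
Since $C_\Pi>0$, solving for the normal displacement and inserting $|\alpha|=O(t)$ produces $\sum_{d(i)=1}\alpha_i\nu_0^i=-h_0(\tilde x)\,t+O(t^{3/2})$, as claimed; this last step also confirms $|\alpha|=O(t)$, closing the bootstrap.

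The step I expect to be the \textbf{main obstacle} is the integration by parts for $\nabla_0 w$ in the group setting, precisely the control of the left-versus-right invariant correction, since a naive estimate of that correction is of the same order $t^{-1/2}$ as the main term. I would handle it exactly as in Steps~2--3 of the proof of Lemma~\ref{plane}: freeze the (approximating Riemannian) metric at $\tilde x$ via the Levi parametrix, so that the leading contribution is governed by the \emph{constant-coefficient} kernel $\Gamma_{\tilde x}$, for which differentiation in the pole and in the integration variable coincide and the Euclidean integration by parts is exact; the parametrix remainder is $O(1)$, and one passes to the sub-Riemannian limit through Proposition~\ref{convergence}. The $t^{-1/2}$ part of the correction in fact cancels by parity --- the normal $\nu_0=(-\nabla_0 f,1)/\sqrt{1+|\nabla_0 f|^2}$ lies in the first layer while $\Gamma$ is even in the higher-layer variables --- which is the structural reason the clean Euclidean leading term survives.
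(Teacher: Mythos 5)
Your proposal follows essentially the same route as the paper's own proof: both reduce the statement to $w(q(t),t)=\tfrac12$, Taylor-expand in the horizontal displacement $\al$, evaluate the zeroth-order term by Proposition \ref{taylorchi} and the first-order term by converting $\nabla_0 w$ into a boundary integral via Lemma \ref{intbyparts} and then applying Lemma \ref{plane2}, and finally solve for $\sum_{d(i)=1}\al_i\nu_0^i$. The only difference is that you explicitly address (via the frozen-metric parametrix and a parity argument) the left- versus right-invariant derivative discrepancy in the integration by parts, a point the paper passes over silently; this is added care within the same method, not a different approach.
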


\proof
From the definition of the heat flow of sets one has

\begin{align}\label{conclusion}
 \dfrac{1}{2} &  = \int_\sg \Gamma(q(t)^{-1}\tilde y,t) \, dy \notag\\
\intertext{considering the Taylor expansion of the integrand with respect to $v$ we obtain}
  & = \int_\sg \Gamma({\tilde x}^{-1} \tilde y, t)d\tilde y -   \sum_{d(i)=1}\al_i\int_\sg X_i |_{\tilde x} \Gamma({\tilde x}^{-1}\tilde y, t)\, d\tilde y + o(t) \notag\\
   & = \int_\sg \Gamma({\tilde x}^{-1} \tilde y, t)d\tilde y -   \sum_{d(i)=1}\al_i\int_\sg X_i |_x\Gamma({\tilde x}^{-1}\tilde y, t)\, d\tilde y + o(t) \notag\\
 \intertext{moreover applying Lemma \ref{intbyparts} we get}
   & = \int_\sg \Gamma({\tilde x}^{-1} \tilde y, t)d\tilde y -   \sum_{d(i)=1}\al_i\int_{\p \sg }\Gamma({\tilde x}^{-1} \tilde y, t) \nu_i^0 (\tilde y) d\sigma_0(\tilde y) + o(t). \notag\\
 \intertext{and applying Lemma \ref{plane2}}
     & = \int_\sg \Gamma({\tilde x}^{-1} \tilde y, t)d\tilde y -  \sum_{d(i)=1}\frac{\al_i\nu^i_0  (\tilde x) }{\sqrt{t}}\int_{\Pi }\Gamma(\tilde z, 1)  d\sigma_0(\tilde z) + O(t) \notag\\
   & = \dfrac{1}{2} - \bigg(\sqrt{t} h_0(\tilde x)  +  \frac{ \langle\al, \nu_0 \rangle_{g_0}}{\sqrt{t}}\bigg) \int_{\Pi }\Gamma(\tilde z, 1)  d\sigma_0(\tilde z) +O(t)) \notag\\
   & = \dfrac{1}{2} - \frac{1}{\sqrt{t}}\int_{\Pi }\Gamma(\tilde z, 1)  d\sigma_0(\tilde z)\Big( th_0(\tilde x)  +  \langle\al, \nu_0 \rangle_{g_0}+  O(t\sqrt{t})\Big)
\end{align}

where we have applied Lemma \ref{taylorchi} in the first integral, and Lemma \ref{plane} in the second, concluding the proof.
\endproof

 Next we derive two important Corollaries from the previous theorem, which will be the main ingredients in the proof of Theorem \ref{MAIN}.

\begin{cor} Choosing $\al = t \beta \nu_0 $, with $\beta\in \R$ in the previous Theorem, we deduce that, if $ q(t) = (\tilde x)\mathrm{exp}(t\beta\nu_0)\in \, \partial (\mathcal{H}(t)\sg)$
 then\footnote{Note the difference in sign with \cite[Formula (60), Theorem 4.1]{EVANS}, which arises because of our choice of unit  normal}  $$\beta =  - h_0(\tilde x) + O(\sqrt{t}) \text{ as }t\to 0,$$
where $h_0(\tilde x)$ is the horizontal mean curvature of $\p \sg$ in $\tilde x$
\end{cor}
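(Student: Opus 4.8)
The plan is to obtain this Corollary as an immediate specialization of the Theorem just proved, with no new analytic input required. First I would insert the choice $\al = t\beta\nu_0$, that is $\al_i(t) = t\beta\,\nu_0^i$ for every index $i$ with $d(i)=1$, into the implication established in the Theorem. Under the hypothesis $q(t) = \tilde x\exp(t\beta\nu_0)\in\partial(\mathcal H(t)\sg)$ this is exactly the hypothesis $q(t)\in\partial(\mathcal H(t)\sg)$ of the Theorem with the stated $\al$, since $\exp(t\beta\nu_0)$ is by definition $\exp(\sum_{d(i)=1} t\beta\,\nu_0^i X_i)$. The Theorem then yields
$$\sum_{d(i)=1}\al_i\,\nu_0^i = -\,h_0(\tilde x)\,t + O(t^{3/2})\qquad\text{as }t\to 0.$$

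The one point to verify is the evaluation of the left-hand side. With $\al_i = t\beta\,\nu_0^i$ one has
$$\sum_{d(i)=1}\al_i\,\nu_0^i = t\beta\sum_{d(i)=1}(\nu_0^i)^2 = t\beta\,|\nu_0|_{\tilde g_0}^2,$$
because $X_1,\dots,X_m,X_{n+1}$ form a $\tilde g_0$-orthonormal frame of the horizontal layer $\tilde H_0$, so that $\sum_{d(i)=1}(\nu_0^i)^2$ is precisely $\langle\nu_0,\nu_0\rangle_{g_0}$. Since $\nu_0=(-\nabla_0 f,1)/\sqrt{1+|\nabla_0 f|^2}$ is by construction a $\tilde g_0$-unit horizontal normal, $|\nu_0|_{\tilde g_0}^2=1$, and the left-hand side reduces to exactly $t\beta$. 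Combining with the displayed identity gives $t\beta = -h_0(\tilde x)\,t + O(t^{3/2})$, and dividing through by $t>0$ produces $\beta = -h_0(\tilde x) + O(\sqrt{t})$, which is the assertion.

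I do not expect a genuine obstacle here: the Corollary is purely a bookkeeping consequence of the Theorem. The only things deserving a line of care are (i) confirming that the parametrization $q(t)=\tilde x\exp(t\beta\nu_0)$ matches the Theorem's $q(t)=(x,f(x))\exp(\sum_{d(i)=1}\al_i(t)X_i)$ with $\al_i(t)=t\beta\,\nu_0^i$, and (ii) that the normalization $|\nu_0|_{\tilde g_0}=1$ is taken with respect to the same inner product $\langle\cdot,\cdot\rangle_{g_0}$ that appears in the Theorem's conclusion. The sign of the leading term, and hence the difference in sign relative to \cite[Formula (60), Theorem 4.1]{EVANS} already flagged in the footnote, is traced entirely to the orientation convention for $\nu_0$ adopted here.
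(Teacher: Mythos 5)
Your proposal is correct and matches the paper's own (implicit) argument exactly: the Corollary is obtained by substituting $\al_i = t\beta\,\nu_0^i$ into the Theorem's conclusion, using $\sum_{d(i)=1}(\nu_0^i)^2 = |\nu_0|_{\tilde g_0}^2 = 1$ to reduce the left-hand side to $t\beta$, and dividing by $t$. Your two points of care (the parametrization match and the unit normalization of $\nu_0$ in the same inner product) are precisely the bookkeeping the paper leaves to the reader.
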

 \begin{prop}\label{4.2}
Let $f\in \B$ be a smooth function  and for $t>0$ denote by $H(t) f$ its flow defined in Definition \ref{definflow}.  For every $x\in \G$ one has
$$(H(t)f) (x)-f(x)=-t h_0(x,f(x)) \sqrt{1+|\nabla_0 f(x)|^2}+ o(t),$$
where the convergence $o(t)/t\to 0$ is uniform as $t\to 0$.
\end{prop}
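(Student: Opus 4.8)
The plan is to read off $(H(t)f)(x)$ from the boundary of the evolved set and then feed the resulting point directly into the preceding Theorem, exploiting that the extra generator $X_{n+1}=\partial_{n+1}$ moves points \emph{purely vertically}. By Definition \ref{definflow} the point $\bigl(x,(H(t)f)(x)\bigr)$ lies on $\partial\bigl(\mathcal H(t)\sg\bigr)$, i.e. $w\bigl((x,(H(t)f)(x)),t\bigr)=1/2$. Setting $\Delta(t):=(H(t)f)(x)-f(x)$ and using that $X_{n+1}=\partial_{n+1}$ generates the vertical $\R$--factor of $\G\times\R$, one has the exact identity
$$\bigl(x,(H(t)f)(x)\bigr)=(x,f(x))\,\exp\bigl(\Delta(t)\,X_{n+1}\bigr),$$
so this point is precisely of the form $q(t)$ in the previous Theorem, with $\alpha=(0,\dots,0,\Delta(t))$ (only the $X_{n+1}$--component nonzero). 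Because no horizontal displacement is introduced, I never have to compare $g=H(t)f$ at two different base points, which is exactly what makes this vertical choice of $\alpha$ cleaner than the normal direction used in the Corollary.

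Before invoking the Theorem I must check that its conclusion holds with an $O(t^{3/2})$ error; this requires the a priori bound $\Delta(t)=O(t)$, for otherwise the second order term of the Taylor expansion in $\alpha$ carried out in the Theorem's proof is uncontrolled. This is the main technical point, and I would establish it through the implicit function theorem, exactly as in Lemma \ref{defflow}. On the one hand Proposition \ref{taylorchi} gives $w(\tilde x,t)-\tfrac12=-\sqrt{t}\,h_0(\tilde x)\int_\Pi\Gamma(\tilde z,1)\,d\sigma_0(\tilde z)+O(t)=O(\sqrt t)$ uniformly. On the other hand, differentiating $F_f$ as in Lemma \ref{defflow} one has $\partial_{n+1}w(\tilde y,t)=-\int_\G\Gamma\bigl(\xi^{-1}y,\,y_{n+1}-f(\xi),t\bigr)\,d\xi$, and the lower Gaussian bound in Proposition \ref{prop: proprieta sol fond literature} yields $|\partial_{n+1}w|\ge c\,t^{-1/2}$ uniformly for $\tilde y$ in a vertical $O(\sqrt t)$--neighbourhood of $\partial\sg$. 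Dividing the vertical defect $O(\sqrt t)$ by the lower bound $c\,t^{-1/2}$ (mean value theorem in $y_{n+1}$, using the strict monotonicity recorded in Lemma \ref{defflow}) gives $\Delta(t)=O(t)$, uniformly in $x$.

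With this bound the previous Theorem applies, and since $\nu_0=(-\nabla_0 f,1)/\sqrt{1+|\nabla_0 f|^2}$ has vertical component $\nu_0^{\,n+1}=1/\sqrt{1+|\nabla_0 f(x)|^2}$, its conclusion reads
$$\Delta(t)\,\nu_0^{\,n+1}=\sum_{d(i)=1}\alpha_i\,\nu_0^i=-t\,h_0(\tilde x)+O(t^{3/2}),\qquad t\to0,$$
uniformly in $x$. Solving for $\Delta(t)$ yields
$$(H(t)f)(x)-f(x)=\Delta(t)=-t\,h_0(x,f(x))\,\sqrt{1+|\nabla_0 f(x)|^2}+O(t^{3/2}),$$
and $O(t^{3/2})=o(t)$ uniformly, which is the claim. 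I expect the only genuine obstacle to be the uniform estimate $\Delta(t)=O(t)$: once $|\partial_{n+1}w|\ge c\,t^{-1/2}$ is in place the remainder is a direct substitution. (Alternatively one may bypass the Theorem and obtain the sharp constant directly from this implicit--function computation, combining the numerator of Proposition \ref{taylorchi} with the precise asymptotics of $\partial_{n+1}w$ coming from Lemma \ref{plane2}; the factor $\sqrt{1+|\nabla_0 f|^2}$ then arises as the ratio of the unweighted integral $\int_\G\Gamma\,d\xi$ to the perimeter integral $\int_{\partial\sg}\Gamma\,d\sigma_0$.)
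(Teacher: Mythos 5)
Your proof is correct and takes essentially the same route as the paper: the paper's own (two-line) proof likewise chooses $\alpha = \bigl((H(t)f)(x)-f(x)\bigr)X_{n+1}$, i.e.\ the purely vertical displacement, and reads the conclusion off the preceding Theorem using $\nu_0^{n+1}=1/\sqrt{1+|\nabla_0 f(x)|^2}$. Your extra step establishing the a priori bound $\Delta(t)=O(t)$ (via the implicit function theorem, the $O(\sqrt{t})$ defect from Proposition \ref{taylorchi}, and the lower bound $|\partial_{n+1}w|\ge c\,t^{-1/2}$) is a point the paper passes over in silence when it invokes the Theorem's Taylor expansion, so this is a welcome tightening rather than a different approach.
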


\proof
Choosing $\al =   ((H(t) f)(x')-f(x'))X_{n+1}$
we have
$$(H(t)f) (x)-f(x)=  <\al, \nu_0>_{g_0} \sqrt{1+|\nabla_0 f(x)|^2}=  -t h_0(x,f(x)) \sqrt{1+|\nabla_0 f(x)|^2}+ O(t)$$
\endproof

 We can now conclude the proof of the main result of the paper, Theorem \ref{MAIN}.
As in \cite{EVANS} the key technical tool in the proof is the non-linear version of Chernoff's formula established by Brezis and Pazy (see Theorem \ref{BrezisPazyteo}) in the introduction.

{\bf Proof of Theorem \ref{MAIN}.} We only have to show \eqref{hyp} for $\lambda=1$. To this end we set for $t>0$ and $f\in \B$,
$$u^t:=\bigg(I+t^{-1}(I-H(t))\bigg)^{-1} f, \ \  \text{ and }\ \  A^tu:= \frac{u-H(t) u}{t}.$$
In view of Proposition \ref{Hlip}  the operator $-A^t$ is $m-$dissipative, thus implying that for all $y\in G$ and $t>0$,
$$\sup_{x\in \G}|u^t(yx)-u^t(x)|\le \sup_{x\in \G}|f(yx)-f(x)|$$ and consequently that  $\{u^t\}_{t\in (0,1]}$ is a family bounded and equi-continuous.

Let $\phi\in C^{\infty}(G)$ such that $u-\phi$ has a  positive maximum at $x_0\in G$. We can always assume that the maximum is strict, adding a suitable  power of the gauge distance, as for example in \cite{CC}.
Since $u^{t_k}\to u$ uniformly on compact sets then one can find a sequence of
points $x_k\to x_0$ as $k\to \infty$ such that $u^{t_k}-\phi$ has a positive maximum at $x_k$. In view of Proposition \ref{Hlip} one has
$$(H(t_k)u^{t_k})(x_k)- (H(t_k)\phi^{t_k})(x_k) \le u^{t_kk}(x_k)-\phi(x_k),\text{ or equivalently }  A^{t_k}\phi(x_k)\le A^{t_k} u^{t_k}(x_k).$$ Since $u^t +A^tu^t=f$
then
\begin{equation}\label{5.14}
u^{t_k}(x_k)+ \frac{\phi(x_k)-(H(t_k)\phi)(x_k)}{t_k}\le f(x_k).
\end{equation}
Invoking Proposition \ref{4.2} with $\phi$ in place of $f$, one obtains
% $x_k$ in place of $x_0$ and $\mu= (H(t_k)\phi)(x_k)$ one obtains
 %$$(H(t_k)\phi)(x_k)=\phi(x_k)+ t_k \frac{K}{2}\sum_{i,j=1}^m\bigg(\delta_{ij}-\frac{X_i \phi(x_k)X_j \phi(x_k)}{1+|\nabla_0 \phi(x_k)|^2} \bigg)X_i X_j \phi(x_k)+ o(t_k), \text{ as }k\to \infty.$$
 % Substitution of the latter   in \eqref{5.14} yields
  $$ u^{t_k}(x_k) - \frac{1}{2}\sum_{i,j=1}^m\bigg(\delta_{ij}-\frac{X_i \phi(x_k)X_j \phi(x_k)}{1+|\nabla_0 \phi(x_k)|^2} \bigg)X_i X_j \phi(x_k)+ o(1) \le f(x_k).$$
 Letting $k\to \infty$ we establish that $u$ is a weak sub solution of \eqref{intro-generator} with $\lambda=1$. In a similar fashion one can prove that $u$ is a weak super-solution, concluding the proof.

%\endproof


\begin{thebibliography}{100}

\bibitem{AMM} Angiuli, L.,  Massari, U., and Miranda, M. Jr.
Geometric properties of the heat content, Manuscripta Mathematica, 4 (2012),1-33.

\bibitem{baloghrickly} Balogh, Z. M., and Rickly, M. Regularity of convex functions on Heisenberg
groups. Ann. Sc. Norm. Super. Pisa Cl. Sci. (5) 2, 4 (2003), 847-868.



\bibitem{BarlesGeorgelin} Barles, G. and Georgelin, C. A simple proof of convergence for an approximation scheme for
computing motions by mean curvature. SIAM Journal on Numerical Analysis, 32(2):484–500,
1995.

\bibitem{BergGall}
van den Berg, M. and  Le Gall, J.-F. Mean curvature and the heat equation. Math. Z.,
215(3):437–464, 1994.

 \bibitem{bieske} Bieske, T. On 1-harmonic functions on the Heisenberg group. Comm. Partial
Differential Equations 3-4, 27 (2002), 727-761.

\bibitem{bieske2} Bieske, T. Comparison principle for parabolic equations in the Heisenberg
group. Electron. J. Differential Equations (2005), No. 95, 11 pp. (electronic).

\bibitem{BramantiMirandaPallara} Bramanti, M., Miranda, M., Pallara, D. Two characterization of BV functions on Carnot groups via the heat semigroup, Inter. Math. Res. Not. 2011

\bibitem{brezis-pazy} Br\'{e}zis H.  and Pazy, A.  Convergence and approximation of semigroups of nonlinear operators in Banach spaces, J. Functional Analysis 9 (1972), 63-74.

\bibitem{CC} Capogna, L. and Citti, G. Generalized mean curvature
flow in Carnot groups, Comm. Partial Differential
Equations 34 (2009), no. 7-9, 937-956.

\bibitem{CCM} Capogna, L, Citti, G. and Manfredini, M., Uniform Gaussian bounds for  sub elliptic heat kernels and an application to the total variation flow of graphs over Carnot groups, preprint 
Arxiv 1212.666 (2012).

\bibitem{cdpt:survey}
Capogna,  L., Danielli, D., Pauls, S. and Tyson, J. An introduction to the Heisenberg group and the sub-Riemannian
isoperimetric problem, Progress in Mathematics, vol. 259, Birkhauser Verlag, Basel, 2007.


\bibitem{ChambolleNovaga} Chambolle, A.  and Novaga, M.  Approximation of the anisotropic mean curvature flow, Preprint,
(2005).

\bibitem{chmy:minimal} Cheng, J.-H., Hwang, J.-F., Malchiodi, A. and Yang, P. Minimal surfaces in pseudohermitian geometry, Ann. Sc.
Norm. Super. Pisa Cl. Sci. (5) 4 (2005), no. 1, 129-177.



\bibitem{CM} Citti, G. and  Manfredini, M.  Uniform Estimates of the fundamental solution for a family of hypoelliptic operators, Potential Analysis, 2006, 25, pp. 147 - 164


\bibitem{CittiSarti} Citti, G. and Sarti, A., A cortical based model of perceptual completion in the Roto-Translation space,
Journal of Mathematical Imaging and Vision, 2006, vol. 24, pp. 307 - 326.


\bibitem{usersguide} Crandall, M.G., Ishii, H. and Lions, P.-L. User's guide to viscosity solutions of second order partial differential
equations, Bull. Amer. Math. Soc. (N.S.) 27 (1992), no. 1, 1-67.


\bibitem{crandall-liggett} Crandall M. G., and Liggett, T. Generation of semigroups of nonlinear transformations on general Banach spaces,
Amer. Jour. Math. 93 (1971), 265-298.

\bibitem{CrandallLions} Crandall, M.G., Lions,  P.-L. Convergent difference schemes for nonlinear parabolic equations
and mean curvature motion, Numer. Math., 75 (1996), pp. 17–41.


\bibitem{dgn:minimal} Danielli, D., Garofalo, N. and Nhieu D.M. Sub-Riemannian calculus on hypersurfaces in Carnot groups, Adv.
Math. 215 (2007), no. 1, 292-378


\bibitem{Deckelnick} Deckelnick, K. Error bounds for a difference scheme approximating viscosity solutions of mean curvature flow, Interfaces Free Bound., 2 (2000), pp. 117–142.

\bibitem{DeckelnickDziuks1} Deckelnick,  K.  and Dziuk, G. Discrete anisotropic curvature flow of graphs, M2AN Math. Model.
Numer. Anal., 33 (1999), pp. 1203–1222.

\bibitem{DeckelnickDziuks2} Deckelnick,  K.  and Dziuk, G. Error estimates for a semi-implicit fully discrete finite element scheme for the mean curvature
flow of graphs, Interfaces Free Bound., 2 (2000), pp. 341–359.

\bibitem{DeGiorgi1} De Giorgi, E. Su una teoria generale della misura (r - 1)-dimensionale in uno spazio ad r
dimensioni., Annali di Matematica Pura ed Applicata. Series IV 36 (1954): 191-213, and
also Ennio De Giorgi: Selected Papers, edited by Ambrosio, L., G. Dal Maso, M. Forti, M.
Miranda, and S. Spagnolo, 79–99. Springer, 2006. English translation, Ibid., 58–78.



\bibitem{DeGiorgi2} De Giorgi, E. Nuovi teoremi relativi alle misure (r - 1)-dimensionali in uno spazio a r
dimensioni. Ricerche di Matematica, 4 (1955): 95-113, and also Ennio De Giorgi: Selected
Papers, edited by Ambrosio, L., G. Dal Maso, M. Forti, M. Miranda and S. Spagnolo, 128–44.
Springer, 2006. English translation, Ibid., 111–27.

\bibitem{DirDragoniVonReness} Dirr, N., Dragoni, F. and von Renesse, M., Evolution by mean curvature flow in sub-Riemannian geometries. Communications on Pure and Applied Mathematics, 9 (2), (2010) pp. 307-326.


\bibitem{Elliot} Elliott, C. M. Approximation of curvature dependent interface motion, in The state of the art in
numerical analysis (York, 1996), vol. 63 of Inst. Math. Appl. Conf. Ser. New Ser., Oxford Univ. Press,
New York, 1997, pp. 407–440.

%\bibitem{EsedogluTsai} Esedoglu, S. and Tsai, Y.H.R. Threshold dynamics for the piecewise constant Mumford-Shah
%functional. J. Comput. Phys., 211(1), 367–384, 2006.

\bibitem{EVANS} Evans, L. C. Convergence of an algorithm for mean curvature motion., Indiana Univ. Math. J. 42 (1993), no. 2,
533-557.

\bibitem{Manfredi} Ferrari, F.,   Manfredi, J.  and  Liu, Q., On the horizontal Mean Curvature Flow for Axisymmetric surfaces in the Heisenberg Group, preprint (2012).

\bibitem{Friedman} Friedman, A. Partial differential equations of parabolic type, Prentice-Hall, 1964. - XIV, 347 p.

\bibitem{GT} Gilbarg, D. and Trudinger, N., Elliptic Partial Differential Equations of Second Order,
Grundlehren der mathematischen Wissenschaften, 224, Springer-Verlag, Berlin, Heidelberg, New York, 1983.


\bibitem{Gromov} Gromov, M. Metric Structures for Riemannian and Non-Riemannian Spaces,
Progress in Mathematics, 152, Birkh\"auser, 1999.

\bibitem{pau:cmc-carnot} Hladky, R. K. and Pauls, S. D. ,Constant mean curvature surfaces in sub-Riemannian geometry,  J. Diff. Geom. 79 (2008) no.1, 111-139.


\bibitem{Hormander} H\"ormander, L., Hypoelliptic second order differential equations, Acta. Math.
119 (1967), 141-171.


\bibitem{Hoffmann} Hoffman, W. C.,  The visual cortex is a contact bundle, Applied Math. And Computation , 1989, 32
, 137-167.

\bibitem{Ishii} Ishii, H.  A generalization of the Bence, Merriman and Osher algorithm for motion by mean
curvature. In Damlamian, A. , Spruck, J. and Visintin, A. editors, Curvature Flows and Related
Topics, pages 111–127. Gakkˆotosho, Tokyo, 1995.

\bibitem{IshiiPiresSouganidis} Ishii, H., Pires, G.E. and Souganidis, P.E. Threshold dynamics type schemes for propagating fronts. TMU Mathematics Preprint Series, 4, 1996.

\bibitem{Jerison-Sanchez-Calle}    Jerison, D.S., Sánchez-Calle, A., Estimates for the heat kernel for a sum of squares of vector Fields, Indiana Univ. Math. J. 35 (4) (1986)
835–854.


\bibitem{MR920674} Jensen, R. The maximum principle for viscosity solutions of fully nonlinear second order partial differential
equations, Arch. Rational Mech. Anal. 101 (1988), no. 1, 1-27.

\bibitem{Leoni} Leoni, F.
Convergence of an Approximation Scheme for Curvature-Dependent Motions of Sets
Journal
SIAM Journal on Numerical Analysis archive
Volume 39 Issue 4, 2001
Pages 1115 - 1131

\bibitem{lms} Lu, G., Manfredi, J. J., and Stroffolini, B. Convex functions on the
Heisenberg group. Calc. Var. Partial Differential Equations 19, 1 (2004), 1-22.

\bibitem{magnani:convex} Magnani, V. Lipschitz continuity, Aleksandrov theorem, and characterizations
for H-convex functions. Math. Ann. 334 (2006), 199-233.


\bibitem{MerrimanBence Osher}  Merriman, B.; Bence, J.; Osher, S. J. Diffusion generated motion by mean cur-
vature. Proceedings of the Computational Crystal Growers Workshop, pp. 73-83.
Editor: Jean Taylor. AMS, Providence, Rhode Island, 1992.


\bibitem{MIRANDA} Miranda, C. Partial Differential Equations of Elliptic Type, Springer-Verlag, 1970.


\bibitem{montefalcone} Montefalcone, F.  Hypersurfaces and variational formulas in sub-Riemannian Carnot groups, J. Math. Pures
Appl. (9) 87 (2007), no. 5, 453-494.

\bibitem{NSW} Nagel, A., Stein, E.M. and Wainger, S. Balls and metrics defined by vector fields I: Basic properties, Acta
Math. 155, (1985), 103-147.

\bibitem{PetitotTondut} Petitot, J., Tondut, Y., 1999: "Vers une neurogéométrie. Fibrations corticales, structures de
contact et contours subjectifs modaux", Mathématiques, Informatique et Sciences Humaines, 145,
5-101.


\bibitem{CSP} Sarti A., Citti G., Petitot J. ,
The symplectic structure of the primary visual cortex 
(2008) Biological Cybernetics, 98 (1) , pp. 33-48.  

\bibitem{RR1} Ritor\'{e} M., and Rosales, C. Area stationary surfaces in the Heisenberg group H1, Adv. Math. 219 no. 2 (2008) 633-671.

\bibitem{RS}
L. P. Rothschild and E. M. Stein, Hypoelliptic differential operators and nilpotent groups, Acta Math. 137 (1976), 247-320.

\bibitem{Sherbakova}  Sherbakova, N.
 Minimal surfaces in contact sub-Riemannian manifolds and structure of their singular sets in the $(2,3)$ case, ESAIM: COCV 15 (2009) 839-862.

\bibitem{Walkington} Walkington, N. J. Algorithms for computing motion by mean curvature, SIAM J. Numer. Anal., 33
(1996), pp. 2215–2238

\bibitem{wang:aronsson}Wang, C.Y. The Aronsson equation for absolute minimizers of $L^\infty$ functionals associated with vector fields satisfying H\"ormander's condition.  Trans. Amer. Math. Soc. 359 (2007), 91-113

\bibitem{wang:convex}  Wang, C. Y.  Viscosity convex functions on Carnot groups, Proc. Amer. Math. Soc. 133 (2005), no. 4, 1247-1253.
(electronic).


\end{thebibliography}
\end{document}